\documentclass[9pt,shortpaper,twoside,web]{ieeecolor}
\usepackage{generic}
\markboth{}{}
\usepackage{graphicx}
\usepackage{cite}
\usepackage{amsmath,amssymb,amsfonts}
\usepackage{graphicx}
\usepackage{textcomp}
\usepackage{subfloat}
\usepackage{amsmath,amsfonts,amssymb}
\usepackage{amsthm}
\usepackage{algorithm}
\usepackage{algpseudocode}
\usepackage{subfig}
\usepackage{caption}
\usepackage{subcaption}
\usepackage{amsmath,amsfonts,amssymb}
\usepackage{amsthm}
\usepackage{hyperref}
\hypersetup{hidelinks=true}

\theoremstyle{definition}
\newtheorem{thm}{Theorem}
\usepackage{amsfonts} % for \mathcal
\usepackage{stackengine}
\newtheorem{definition}{Definition}
\newtheorem{assumption}{Assumption}
\newtheorem{lemma}{Lemma}
\usepackage{textcomp}
\usepackage{amsthm}
\usepackage{amsmath,mathtools}
\theoremstyle{definition}

\theoremstyle{remark}
\newtheorem*{remark}{Remark}
\def\BibTeX{{\rm B\kern-.05em{\sc i\kern-.025em b}\kern-.08em
    T\kern-.1667em\lower.7ex\hbox{E}\kern-.125emX}}
% \markboth{\journalname}
% {Dutta \& Doan \MakeLowercase{\textit{}}: Resilient Two-Time Scale Comparative Elimination Filter for Byzantine Federated Learning}
\begin{document}
\title{
Resilient Two-Time-Scale Local Stochastic Gradient Descent for Byzantine Federated Learning}
\author{Amit Dutta and Thinh T. Doan\thanks{Amit Dutta is with the Electrical and Computer Engineering Department at Virgnia Tech, email: amitdutta@vt.edu. Thinh T. Doan is with the Aerospace Engineering and Engineering Mechanics Department at University of Texas, Austin, email: thinhdoan@utexas.edu. This work was partially supported by NSF-CAREER Grant No. 2339509 and AFOSR YIP Grant No. 420525}
}

\maketitle

\newcommand{\tdoan}[1]{{\color{red}\bf [thinh: #1]}}

\begin{abstract}
% We study distributed Byzantine fault tolerance in a network of $N$ agents communicating with a trusted centralized coordinator, where a subset of $f$ agents may not follow to the desired algorithm and could share erroneous information with the coordinator. The main objective is to find the optimizer of honest agents' aggregate cost functions. The prevalent method, local gradient descent (federated learning), often yields approximate solutions in Byzantine settings. Recent research shows that integrating a comparative elimination (CE) filter at the coordinator effectively reduces Byzantine agents' influence. Although this method helps in computation of a closer approximation to the true optimizer, the approximation is heavily influenced by both the dishonest agents and the variance of the stochastic gradients. 
We study local stochastic gradient descent methods for solving federated optimization over a network of agents communicating indirectly through a centralized coordinator. We are interested in the Byzantine setting where there is a subset of $f$ malicious agents that could observe the entire network and send arbitrary values to the coordinator to disrupt the performance of other non-faulty agents. The objective of the non-faulty agents is to collaboratively compute the optimizer of their respective local functions under the presence of Byzantine agents. In this setting, prior works show that the local stochastic gradient descent method can only return an approximate of the desired solutions due to the impacts of Byzantine agents. Whether this method can find an exact solution remains an open question. In this paper, we will address this open question by proposing a new variant of the local stochastic gradient descent method. Under similar conditions that are considered in the existing works, we will show that the proposed method converges exactly to the desired solutions. We will provide theoretical results to characterize the convergence properties of our method, in particular, the proposed method convergences at an optimal rate $\mathcal{O}(1/k)$ in both strongly convex and non-convex settings, where $k$ is the number of iterations. Finally, we will present a number of simulations to illustrate our theoretical results. 

\end{abstract}

\begin{IEEEkeywords}
Federated optimization, Byzantine fault-tolerance, two-time-scale methods.
\end{IEEEkeywords}

\section{Introduction}
We consider a distributed optimization framework where there are $N$ agents communicating with a single coordinator. This framework is also popularly known as {\em federated optimization}~\cite{konevcny2015federated}. Associated with each agent $i$ is a function $q^{i}:\mathbb{R}^{d}\rightarrow \mathbb{R}$. The goal of the agents is to find a point $x^{\star}$ that optimizes their aggregate local functions.

% The expansion of large networks has led to an explosive increase in data generation, imposing significant demands for enhanced computational power and storage capacity. In response to this challenge, a plethora of distributed algorithms have been developed, aiming to distribute computation and data across networked devices. A prominent approach in this domain is federated learning, which has gained popularity as a distributed framework enabling collaborative training of a shared model by multiple devices \cite{mcmahan2017communication, zhao2021fedpage, woodworth2020local, Kairouz_survery2020, Li_survery2020}.

Besides traditional machine learning applications\cite{Li_survery2020}, federated optimization now also finds application in networked systems e.g., {internet of vehicles}~\cite{manias2021making}, industrial control systems~\cite{huong2021detecting}, and wireless \cite{niknam2020federated, chen2021distributed, khan2021federated}.
% , where the rapid proliferation of wireless devices has created a pressing need for efficient and scalable machine learning solutions. In such networks, a common problem often involves optimizing an aggregate objective function, which is composed of $N$ individual functions distributed across $N$ distinct agents (e.g., mobile devices). A concrete instance of this problem is federated spectrum sensing within a wireless sensor network, where the objective is to determine the availability of spectrum resources.
One of the key advantages of the federated optimization framework is its ability to implement optimization algorithm updates locally at the agents without necessitating the transmission of raw data to a centralized coordinator.
%(e.g., a server or network operator). 
This localized data processing not only reduces the communication overhead between agents and the central server but also introduces an element of privacy preservation.
%For example, in the context of federated spectrum sensing, each user gains insight into spectrum availability through their local network and subsequently conveys updates to a central server. The server aggregates these local decisions and returns a consolidated decision to the users. This iterative process continues until the network converges to an optimal solution for determining the available frequency bands for communication. This approach combines collaborative learning with privacy preservation,  making it highly suitable for addressing the challenges posed by the proliferation of wireless devices and the need for efficient spectrum resource allocation in wireless communication networks.

One of the main challenges in federated learning is the vulnerability of the system to malicious attacks where some agents in the network may fail or whose updates can be manipulated by an external entity. Such malicious agents will have detrimental impacts to the performance of other agents, and if not addressed, it can lead to catastrophic failures of entire network. For example, malicious attacks have been identified as the most critical problem in wireless spectrum sensing \cite{shi2021challenges}.    

In this paper, we are interested in studying the so-called distributed local stochastic gradient descent (SGD) for solving federated optimization. Our focus is to characterize the performance of this method when there are a (small) number of Byzantine malicious agents in the network . In this setting,  Byzantine agents can observe the entire network and send any information to the centralized coordinator to corrupt the output of local SGD. Due to the impact of Byzantine agents, our goal now is to solve the optimization problem that only involves the honest agents. In particular, we consider the setting where there are up to $f$ faulty Byzantine agents with unknown identities. We then seek to solve an exact fault-tolerance problem defined as follows.

\textbf{Exact fault-tolerance problem}: 
Let $\mathcal{H}$ be the set of honest agents with $|\mathcal{H}| \geq N-f$, then an algorithm is said to have exact fault-tolerance if it allows all the non-faulty agents to compute
\begin{align}\label{eq:exact_tolerence}
    x_{\mathcal{H}}^{\star} \in \arg \min_{x\in \mathbb{R}^{d}}\sum_{i \in \mathcal{H}}q^{i}(x).
\end{align}
Note that if the number of Byzantine agent is large, i.e., $f> \mathcal{H}$, it is impossible to solve problem \eqref{eq:exact_tolerence}. We, therefore, consider the following $2f$-\textit{redundancy} condition, which is necessary and sufficient for solving problem \eqref{eq:exact_tolerence} exactly \cite{gupta2020fault}. 
\begin{definition}[$2f$-\textbf{redundancy}]
The set $\mathcal{H}$, with $|\mathcal{H}| \geq N-f$, is said to have $2f$-redundancy if for any subset $\mathcal{S} \subset \mathcal{H}$ with $|\mathcal{S}| \geq N -2f$
\begin{align}
    \arg \min_{x\in \mathbb{R}^{d}}\sum_{i \in \mathcal{S}}q^{i}(x) = \arg \min_{x\in \mathbb{R}^{d}}\sum_{i \in \mathcal{H}}q^{i}(x).
\end{align}
\end{definition}
%The above definition states that solving the optimization problem defined over the honest agent is same as solving the problem defined over at least $N-2f$ honest agents and vice-versa. 
We note that the $2f$-redundancy condition arises naturally in many problems, including hypothesis testing \cite{chong2015observability, gupta2019byzantine, mishra2016secure, su2019finite}, and distributed learning \cite{alistarh2018byzantine, blanchard2017machine, charikar2017learning, guerraoui2018hidden}. For example, in distributed learning this condition is satisfied when all agents have identical objective functions, the so-called homogeneous setting. 
%The $2f$-redundancy condition also holds in the heterogeneous setting as long as we can solve p. 

%,  provided that the distributed learning problem involving non-faulty agents, as described in equation \eqref{eq:exact_tolerence}, can be solved using data from at least $N - 2f$ non-faulty agents. The $2f$-redundancy is crucial to our work as will be discussed later in the paper in Section \ref{sec:Algorithm_description}. We will study the performance of distributed local SGD under the $2f$-redundancy condition. Motivated by the recent work in \cite{gupta2023byzantine}, we will consider the so-called comparative elimination (CE) filter in the distributed local SGD to mitigate the detrimental impact of Byzantine agents. However, the authors only considered strongly convex setting and showed that the algorithm can only achieve approximate fault tolerance, thereby converging to a neighborhood around the desired solution. The authors further showed that the size of the neighborhood mainly depends on the step size of the gradient descent and variance of the gradient noise. 

In this paper, we will investigate the convergence of local SGD in solving problem \eqref{eq:exact_tolerence} under this $2f$-redundancy condition. In \cite{gupta2023byzantine}, the authors show that the local deterministic gradient descent can find $x_{\mathcal{H}}^{\star}$ when using a comparative elimination (CE) filter in its update to address the impacts of Byzantine agents. However, their approach cannot be extended to the case of local SGD, i.e., each agent only has access to stochastic samples of $\nabla q^{i}(\cdot)$. In this stochastic setting, the work in \cite{gupta2019byzantine} can only return a point within a ball around $x_{\mathcal{H}}^{\star}$ whose the size depends on the ratio $f/\mathcal{H}$. The level idea behind this issue is that the CE filter cannot simultaneously address the impacts of Byzantine agents and local stochastic errors due to gradient sampling. Our focus is therefore to address this open question. Specifically, we will propose a new variant of local SGD that will allow each agent to find exactly $x_{\mathcal{H}}^{\star}$ in both strongly convex and non-convex settings. Our main contribution is summarized as follows.

\noindent\textbf{Main contribution.} We propose a new two-time-scale variant of local SGD for solving problem \eqref{eq:exact_tolerence} in the Byzantine setting under the $2f$-redundancy condition. We will show that the proposed algorithm can return an exact solution $x_{\mathcal{H}}^{\star}$ of problem \eqref{eq:exact_tolerence}. In addition, we will study theoretical results to characterize the convergence rate of our algorithms when the underlying objective function satisfies either strong convexity or non-convex Polyak-\L ojasiewicz (P\L) condition. In both cases, our algorithm converges to the optimal solutions at an optimal rate $\mathcal{O}(1/k)$, where $k$ is the number of iteration. Finally, we will provide a few numerical simulations to illustrate the correctness of our theoretical results.  

\subsection{Related work}
According to existing literature, there are various Byzantine fault-tolerant aggregation schemes for distributed optimization and learning. These include \textit{multi-KRUM} \cite{blanchard2017machine}, \textit{coordinate-wise trimmed mean} (CWTM) \cite{su2019finite}, \textit{geometric median-of-means}(GMoM) \cite{chen2017distributed}, \textit{minimum-diameter averaging} (MDA) \cite{guerraoui2018hidden}, and \textit{Byzantine-robust stochastic aggregation} (RSA) \cite{li2019rsa} filters. However, it is important to note that these schemes do not guarantee \textit{exact fault-tolerance} even in a deterministic setting with $2f$-redundancy, unless additional assumptions are made regarding the objectives of the honest agents. The work by \cite{gupta2023byzantine} shows that it is possible to achieve exact fault tolerance in deterministic setting and approximate fault tolerance in stochastic setting in $2f$-redundancy scenario. \cite{farhadkhani2022byzantine} proposed \textit{RESilient Averaging of Momentum} (RESAM) which presents an unified byzantine fault-tolerant framework with accelerated gradient descent based on the previously mentioned methods. They also established finite time convergence with some additional assumptions. Although their results hold for non-convex objectives, they clearly proved that such generalization and acceleration cannot be applied for the CE aggregation scheme. Recently, \cite{liu2023impact} explored the impact of Byzantine agents and stragglers, where stragglers are agents that experience significant delays in their updates, on solving distributed optimization problems under the redundancy of cost functions. Furthermore, the authors examined Byzantine fault-tolerant min-max distributed optimization problems under similar redundancy conditions in \cite{liu2022byzantine}. Both works demonstrate that the authors' approach can only achieve approximate solutions. We also want to note some relevant work in \cite{cao2019distributed, cao2020distributed}, where the authors study approximate fault tolerance problem with more relaxed conditions on the Byzantine agents. Our work in this paper builds on the works of \cite{gupta2023byzantine} where based on the $2f$-redundancy condition we propose a two-time-scale variant of the local SGD in both strongly convex non-convex setting. To address the affect of approximate convergence to optimal solution in byzantine free case minibatch SGD has been studied in \cite{woodworth2020local}, \cite{woodworth2020minibatch} in order to reduce the dependency on the variance of the stochastic gradients. Here the authors have further proposed an accelerated version of the mini-batch SGD to further reduce the impact of gradient noise in both IID and non-IID sampling cases. However no such improvements have been addressed in literature when a given network is under attack from byzantine agents.

Another relevant literature to this paper is the recent works in studying the complexity of two-time-scale stochastic approximation, see for example \cite{ 8919880,GuptaSY2019_twoscale,Doan_two_time_SA2019,Kaledin_two_time_SA2020,doan2022nonlinear, zeng2021two,zeng2024fast, doan2024fast,zeng2022regularized}. It has been observed that two-time-scale approach can be used to either study or design better distributed algorithms in different settings, e.g., delays \cite{doan2017convergence}, quantization \cite{DoanMR2018b}, and cluster networks \cite{dutta2021convergence,9992395}. In this paper, we will leverage this idea to design a new resilient local SGD for the Byzantine setting.

\section{Resilient Two-Time-Scale Local SGD}\label{sec:Algorithm_description}
\begin{algorithm}[!t]
\caption{Resilient Two-Time-Scale Local SGD}\label{alg:cap}
\begin{algorithmic}[1]
    \State \textbf{Initialize}: The server initializes the model with $\Bar{x}_{0}\in \mathbb{R}^{d}$. Each agent initializes with step-sizes $y^{i}_{0,0}, \alpha_{k}, \beta_{k}$ and chooses $\mathcal{T}$.
\For{$k=1,..$}
    \State All clients $i=1,2,..,N$ in parallel do
    \State Receive $\Bar{x}_{k}$ from the server and set $x^{i}_{k,0} = \Bar{x}_{k}$
    \State Agents perform initialization $y^{i}_{k,0} = y^{i}_{k-1,\mathcal{T}}$
     \For{$t=0,...,\mathcal{T}-1$}
     \begin{align}
        x^{i}_{k,t+1} &= x^{i}_{k,t} - \beta_{k} y^{i}_{k,t},\label{alg:xi}\\ 
        y^{i}_{k,t+1} &= (1-\alpha_{k})y^{i}_{k,t} + \alpha_{k}\nabla q^{i}(x^{i}_{k,t};\Delta ^{i}_{k,t}).\label{alg:yi}
     \end{align}
    \EndFor
    \State Agent $i$ broadcasts $x^{i}_{k,\mathcal{T}}$ to the server. Server arranges the distance between $x^{i}_{k,\mathcal{T}}$ and $\Bar{x}_{k}$ in ascending order as
     \begin{align}
         \|\Bar{x}_{k} - x^{i_1}_{k,\mathcal{T}}\| \leq \|\Bar{x}_{k} - x^{i_2}_{k,\mathcal{T}}\| \leq \cdots \leq \|\Bar{x}_{k} - x^{i_N}_{k,\mathcal{T}}\|.\label{alg:sort_distances}
     \end{align}
    \State Server eliminate the $f$ -largest distances to obtain 
    $$\mathcal{F}_{k} =\{i_{1}, \cdots,i_{N-f}\}.$$ 
    \State Using the estimates of clients in $\mathcal{F}_{k}$, the server implements
     \begin{align}
     \Bar{x}_{k+1} = \frac{1}{|\mathcal{F}_{k}|}\sum_{i\in \mathcal{F}_{k}}^{}x^{i}_{k,\mathcal{T}}.\label{alg:xbar}
     \end{align}
\EndFor
\end{algorithmic}

\end{algorithm}
In this section, we present the proposed algorithm, namely, resilient two-time scale local SGD, for solving problem \eqref{eq:exact_tolerence} when there are up to $f$ Byzantine agents. Our algorithm is formally stated in Algorithm \ref{alg:cap}, where  each honest agent $i\in\mathcal{H}$ maintains two local variables $x^{i},y^{i}$ to estimate the optimal value $x_{\mathcal{H}}^{\star}$ and its local gradient $\nabla q^{i}(\cdot)$, respectively. On the other hand, the server maintains a global variable $\bar{x}$ to estimate the average of the iterates sent by the agents. At any global iteration $k$, each agent $i\in\mathcal{H}$ implements $\mathcal{T}$ two-time-scale SGD steps to update its variables (a.k.a Eqs. \eqref{alg:xi} and \eqref{alg:yi}), where $\nabla q^{i}(\cdot;\Delta^{i})$ is the sample of $\nabla q^{i}(\cdot)$. These two updates are implemented by using two different step sizes $\alpha_{k} \geq \beta_{k}$, i.e., the update of $y^{i}$ is implemented at a ``faster" time scale than $x^{i}$, explaining for the name of two-time-scale SGD. In particular, each agent first estimates its local gradient from the samples, which is then used to update its local variable toward the optimal solution $x_{\mathcal{H}}^{\star}$. After $\mathcal{T}$ steps, each honest agent will send its last iterate $x_{k,\mathcal{T}}^{i}$ to the server. We note that Byzantine agents can send any arbitrarily to the server. To address this issue, the server will implement a comparative elimination (CE) filter, studied in \cite{gupta2020fault}. In particular, the server first sorts the distance between the agent estimates and its average in an ascending order (Step 8). The server then eliminates the $f$ largest distances (Step 9), i.e., since it does not know the identity of Byzantine agents the server can only eliminate any ``suspiciously large values". The server then computes a new average based on the estimates of remaining $N-f$ agents in Eq. \eqref{alg:xbar}.

In Eq. \eqref{alg:yi}, when $\alpha_{k} = 1$ the proposed algorithm is reduced to the local SGD with CE filter studied in \cite{gupta2023byzantine}. However, as we will show in this paper, by properly choosing $\alpha_{k}$ and $\beta_{k}$ our algorithm will guarantee an exact convergence to $x_{\mathcal{H}}^{\star}$ even under the Byzantine setting, which cannot be achieved by the one in \cite{gupta2023byzantine}. In particular, we choose the step sizes to satisfy $\beta_{k} \leq \alpha_{k} \leq 1$ as follows
\begin{align}\label{eq:step_sizes}
\begin{aligned}
    &\alpha_{k} = \frac{C_\alpha}{1+h+k}, \quad \beta_{k} = \frac{C_\beta}{1+h+k},\\ 
    &\text{where }\; C_\beta \leq C_\alpha\; \text{ s.t. } \alpha_k\leq 1 \text{ and } \;L\mathcal{T} \beta_{k}\leq 1,\;\;\forall k\geq0.
    \end{aligned}
\end{align}
We will demonstrate in Theorems \ref{Theorem:SC_Tg1} and \ref{Theorem:PL_Tg1} that, with an appropriate selection of the parameter $h > 1$, this choice of step sizes is essential for the algorithm to achieve an optimal convergence rate of $\mathcal{O}(1/k)$. 

\section{Technical Assumptions and Preliminaries}
We present here the main technical assumptions and some preliminaries, which will help to facilitate the development of our main results later. First, we consider the following two main assumptions, where $\mathcal{P}_{k,t}$ is the filtration that includes all the random variables generated by Algorithm \ref{alg:cap} up to time $k+t$. 

\begin{assumption}\label{eq:Assumption_noise}
    The random variables $\Delta^{i}_{k}$, $\forall i$ and $k\geq 0$, are i.i.d. and there exists a positive constant $\sigma$ such that we have $\forall x \in \mathbb{R}^{d}$
    \begin{align*}
    \begin{aligned}
        &\mathbb{E}[\nabla q^{i}(x,\Delta^{i}_{k,t})\mid \mathcal{P}_{k,t}] = \nabla q^{i}(x),\\
        &\mathbb{E}[\|\nabla q^{i}(x,\Delta^{i}_{k,t})-\nabla q^{i}(x)\|^{2}\mid \mathcal{P}_{k,t}] \leq \sigma^{2}.
        \end{aligned}
        %\label{eq:sigma}
    \end{align*}
\end{assumption}

\begin{assumption}\label{eq:Assumption_Lipschitz} For each $i\in\mathcal{H}$, $q^{i}$ has Lispchitz continuous gradients, i.e., there exists a constant $L>0$ such that 
\begin{align*}
%\label{eq:Lip_condition}
    \|\nabla q^{i}(y) - \nabla q^{i}(x)\| \leq L \|y-x\|, \ \forall x,y \in \mathbb{R}^{d}.
\end{align*}
\end{assumption}
In the sequel, we will assume that these two assumptions always hold. For notational convenience, we define
\begin{align*}
%\label{eq:avg_q_h}
     q_{\mathcal{H}}(x) = \frac{1}{|\mathcal{H}|}\sum_{i \in \mathcal{H}}q^{i}(x).
\end{align*}
Let $e_{k,t}^{i}$ be the local gradient estimate error at client $i$ defined as
\begin{align}
    e_{k,t}^{i} = y_{k,t}^{i} - \nabla q^{i} (\bar{x}_{k}),\label{notation:e_i} 
\end{align}
and $W_{k}$ be the average of local gradient estimate error defined as 
\begin{align}\label{eq:V_2_sc}
    W_{k} = \frac{1}{|\mathcal{H}|}\sum_{i \in \mathcal{H}}\|e^{i}_{k,0}\|^{2}.
\end{align}
We denote by $\mathcal{B}_{k}$ the set of potential Byzantine clients and $\mathcal{H}_{k}$ the set of non-faulty clients in $\mathcal{F}_{k}$ in step $9$ in Algorithm \ref{alg:cap}. Finally, we denote by $\mathcal{X}^{\star}_{\mathcal{H}}$ and $\mathcal{X}^{\star}_{i}$ the sets of minimizers of $q_{\mathcal{H}}$ and $q^{i}$, respectively. Then, under Assumption \ref{eq:Assumption_Lipschitz} and the $2f$-redundancy property, one can show that \cite[Lemma 1]{gupta2020resilience}
\begin{align}\label{eq:redundancy_condition}
    \bigcap \limits_{i\in \mathcal{H}} \mathcal{X}^{\star}_{i} = \mathcal{X}^{\star}_{\mathcal{H}}. 
\end{align}
Using the notation above, we rewrite \eqref{alg:xi} as follows: $\forall i\in\mathcal{H}$
\begin{align}
\label{eq:x_update_T}
    x^{i}_{k,\mathcal{T}} 
    %& = \Bar{x}_{k} -\beta_{k}\sum_{t=0}^{\mathcal{T}}y^{i}_{k,t}\notag\\
    & = \Bar{x}_{k} -\beta_{k}\sum_{t=0}^{\mathcal{T}-1}e^{i}_{k,t} -\mathcal{T}\beta_{k}\nabla q^{i}(\bar{x}_{k}),
\end{align}
which since $|\mathcal{F}_{k}| = |\mathcal{H}|$ and $|\mathcal{B}_{k}| = |\mathcal{H}\backslash\mathcal{H}_{k}|$ yields
\begin{align}\label{eq:x_bar_update_0}
        \Bar{x}_{k+1}&=\frac{1}{|\mathcal{F}_{k}|}\sum_{i \in \mathcal{F}_{k}}x^{i}_{k,\mathcal{T}}\notag \\
        &=\frac{1}{|\mathcal{H}|}\Big[\sum_{i\in \mathcal{H}}x^{i}_{k,\mathcal{T}} + \sum_{i\in \mathcal{B}_{k}}x^{i}_{k,\mathcal{T}} - \sum_{i\in \mathcal{H} \backslash \mathcal{H}_{k}}x^{i}_{k,\mathcal{T}}\Big],\notag\\
        &= \bar{x}_{k} - \mathcal{T}\beta_{k}\nabla q_{\mathcal{H}}(\bar{x}_{k}) - \frac{\beta_{k}}{|\mathcal{H}|}\sum_{i \in \mathcal{H}}\sum_{t=0}^{\mathcal{T}-1}e^{i}_{k,t} \notag\\
        &\quad + \frac{1}{|\mathcal{H}|}\Big[\sum_{i\in \mathcal{B}_{k}}(x^{i}_{k,\mathcal{T}} -\Bar{x}_{k}) - \sum_{i\in \mathcal{H} \backslash \mathcal{H}_{k}}(x^{i}_{k,\mathcal{T}}-\Bar{x}_{k})\Big].
    \end{align}
We next consider the following result on $W_k$, where for an ease of exposition its proof will be presented in the Appendix.  
\begin{lemma}\label{lem:Lemma_V2_SC}
For all $k\geq 0$ we have 
    \begin{align}
           \mathbb{E}[W_{k+1}]
     % &\leq \Big(1-\frac{\alpha_{k}}{2}\Big)\mathbb{E}[V^{k}_{2}]\notag\\
     % &+\Big(15\alpha_{k}\beta^{2}_{k}L^{4}\mathcal{T}^{3} + \frac{60\beta^{2}_{k}L^{4}\mathcal{T}^{2}}{\alpha_{k}}\Big)\mathbb{E}[V^{k}_{1}]\notag\\
     % &+\Big(\frac{192\beta^{2}_{k}L^{4}\mathcal{T}^{2}|\mathcal{B}_{k}|}{\alpha_{k}|\mathcal{H}|} + \frac{48\beta^{2}_{k}L^{4}\mathcal{T}^{2}|\mathcal{B}_{k}|^{2}}{\alpha_{k}|\mathcal{H}|^{2}}\Big)\mathbb{E}[V^{k}_{1}]\notag\\
     % &+\Big(6\alpha_{k}\beta^{2}_{k}L^{2}\mathcal{T}^{3} + \frac{24\beta^{2}_{k}L^{2}\mathcal{T}^{2}}{\alpha_{k}} + \frac{96\beta^{2}_{k}L^{2}\mathcal{T}^{2}|\mathcal{B}_{k}|}{\alpha_{k}|\mathcal{H}|}\Big)\mathbb{E}[V^{k}_{2}]\notag\\
     % &+\frac{15\alpha^{2}\sigma^{2}\mathcal{T}}{2} + 24\alpha_{k}\beta^{2}_{k}L^{2}\mathcal{T}^{2}\sigma^{2} + \frac{96\alpha_{k}\beta^{2}_{k}L^{2}\mathcal{T}^{2}|\mathcal{B}_{k}|\sigma^{2}}{|\mathcal{H}|}.
    &\leq \Big(1-\frac{\alpha_{k}}{2} + 126(L+1)^{2}\mathcal{T}^{2}\beta_{k}\Big)\mathbb{E}[W_{k}]\notag\\
    &\quad +\Big(14L^{3}\mathcal{T}^{2}\alpha_{k}\beta_{k} + \frac{300L^{4}\mathcal{T}^{2}\beta^{2}_{k}}{\alpha_{k}} \Big)\mathbb{E}[\|\bar{x}_{k}-x^{\star}_{\mathcal{H}}\|^{2}]\notag\\
     &\quad+ 2\sigma^{2}\mathcal{T}\alpha^{2}_{k} + 96L^{2}\mathcal{T}^{2}\sigma^{2}\beta^{2}_{k}\notag\\
    &\quad+ 6L^{2}\mathcal{T}^{2}\sigma^{2}\alpha^{2}_{k}\beta^{2}_{k} + \frac{96L^{2}\mathcal{T}^{2}\sigma^{2}f\alpha_{k}\beta^{2}_{k}}{|\mathcal{H}|}\cdot\label{lem:Lemma_V2_SC:ineq}
    \end{align}
\end{lemma}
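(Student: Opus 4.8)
The plan is to track the inner-loop evolution of the gradient-tracking error $e_{k,t}^i$ for a fixed honest agent $i\in\mathcal{H}$, unroll it across the $\mathcal{T}$ local steps, and then account for the shift of the reference point from $\bar{x}_k$ to $\bar{x}_{k+1}$ before averaging over $i$. First I would derive the one-step recursion: subtracting $\nabla q^i(\bar{x}_k)$ from the update \eqref{alg:yi} gives
\begin{align*}
e_{k,t+1}^i = (1-\alpha_k)e_{k,t}^i + \alpha_k\xi_{k,t}^i + \alpha_k\bigl(\nabla q^i(x_{k,t}^i)-\nabla q^i(\bar{x}_k)\bigr),
\end{align*}
where $\xi_{k,t}^i := \nabla q^i(x_{k,t}^i;\Delta_{k,t}^i)-\nabla q^i(x_{k,t}^i)$ is conditionally mean-zero with $\mathbb{E}[\|\xi_{k,t}^i\|^2\mid\mathcal{P}_{k,t}]\leq\sigma^2$ by Assumption \ref{eq:Assumption_noise}. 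Conditioning on $\mathcal{P}_{k,t}$ eliminates the cross terms involving $\xi_{k,t}^i$, while the drift $\nabla q^i(x_{k,t}^i)-\nabla q^i(\bar{x}_k)$ is controlled by $L\|x_{k,t}^i-\bar{x}_k\|$ via Assumption \ref{eq:Assumption_Lipschitz}. Splitting the square with Young's inequality and choosing the free parameter proportional to $\alpha_k$ so that $(1-\alpha_k)^2(1+\tfrac{\alpha_k}{2})\leq 1-\tfrac{\alpha_k}{2}$ for $\alpha_k\le 1$ yields a per-step bound of the form $\mathbb{E}[\|e_{k,t+1}^i\|^2]\leq (1-\tfrac{\alpha_k}{2})\mathbb{E}[\|e_{k,t}^i\|^2] + 3L^2\alpha_k\,\mathbb{E}[\|x_{k,t}^i-\bar{x}_k\|^2] + \alpha_k^2\sigma^2$.

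Next I would bound the local drift. Since $x_{k,0}^i=\bar{x}_k$, telescoping \eqref{alg:xi} and writing $y_{k,s}^i=e_{k,s}^i+\nabla q^i(\bar{x}_k)$ gives $x_{k,t}^i-\bar{x}_k=-\beta_k\sum_{s=0}^{t-1}e_{k,s}^i-t\beta_k\nabla q^i(\bar{x}_k)$. The crucial structural fact is that the $2f$-redundancy identity \eqref{eq:redundancy_condition} forces $\nabla q^i(x_{\mathcal{H}}^\star)=0$ for every $i\in\mathcal{H}$, hence $\|\nabla q^i(\bar{x}_k)\|\leq L\|\bar{x}_k-x_{\mathcal{H}}^\star\|$. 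This produces
\begin{align*}
\|x_{k,t}^i-\bar{x}_k\|^2\leq 2\mathcal{T}\beta_k^2\sum_{s=0}^{t-1}\|e_{k,s}^i\|^2 + 2\mathcal{T}^2\beta_k^2 L^2\|\bar{x}_k-x_{\mathcal{H}}^\star\|^2,
\end{align*}
which is exactly the mechanism generating both the $\mathbb{E}[W_k]$ and the $\mathbb{E}[\|\bar{x}_k-x_{\mathcal{H}}^\star\|^2]$ contributions. Substituting this into the per-step bound and unrolling over $t=0,\dots,\mathcal{T}-1$ — using $\beta_k\le\alpha_k$ and $L\mathcal{T}\beta_k\le 1$ from \eqref{eq:step_sizes} to keep the accumulated drift coefficients summable — gives a bound on $\mathbb{E}[\|e_{k,\mathcal{T}}^i\|^2]$ in terms of $\mathbb{E}[\|e_{k,0}^i\|^2]$, $\mathbb{E}[\|\bar{x}_k-x_{\mathcal{H}}^\star\|^2]$, and the noise floor $\mathcal{T}\alpha_k^2\sigma^2$.

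The final and most delicate step is to pass from $e_{k,\mathcal{T}}^i$ to $e_{k+1,0}^i$. Since $y_{k+1,0}^i=y_{k,\mathcal{T}}^i$, we have $e_{k+1,0}^i=e_{k,\mathcal{T}}^i+\bigl(\nabla q^i(\bar{x}_k)-\nabla q^i(\bar{x}_{k+1})\bigr)$, and the reference shift is controlled by $\|\nabla q^i(\bar{x}_k)-\nabla q^i(\bar{x}_{k+1})\|\le L\|\bar{x}_{k+1}-\bar{x}_k\|$. I would bound $\|\bar{x}_{k+1}-\bar{x}_k\|^2$ from the decomposition \eqref{eq:x_bar_update_0}: the term $\mathcal{T}\beta_k\nabla q_{\mathcal{H}}(\bar{x}_k)$ contributes $\mathcal{T}^2\beta_k^2L^2\|\bar{x}_k-x_{\mathcal{H}}^\star\|^2$ (again via $\nabla q_{\mathcal{H}}(x_{\mathcal{H}}^\star)=0$), the honest error term feeds back into $W_k$ plus a noise floor, and the Byzantine term is handled by the CE-filter property: each surviving Byzantine index in $\mathcal{B}_k$ has distance to $\bar{x}_k$ no larger than that of an eliminated honest index, so since $|\mathcal{B}_k|=|\mathcal{H}\backslash\mathcal{H}_k|\le f$ they can be matched to give $\sum_{i\in\mathcal{B}_k}\|x_{k,\mathcal{T}}^i-\bar{x}_k\|\le\sum_{i\in\mathcal{H}\backslash\mathcal{H}_k}\|x_{k,\mathcal{T}}^i-\bar{x}_k\|$; reapplying the drift bound to these at most $f$ honest terms is what produces the $f/|\mathcal{H}|$ factor. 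Expanding $\|e_{k+1,0}^i\|^2$ and splitting the cross term with Young's inequality at parameter $\alpha_k$ — absorbing the $\alpha_k\|e_{k,\mathcal{T}}^i\|^2$ part into the contraction and paying $\tfrac{1}{\alpha_k}\|\nabla q^i(\bar{x}_k)-\nabla q^i(\bar{x}_{k+1})\|^2$ — is precisely what generates the $\beta_k^2/\alpha_k$ scaling in the $\tfrac{300L^4\mathcal{T}^2\beta_k^2}{\alpha_k}$ coefficient. Averaging over $i\in\mathcal{H}$ and collecting like powers of $\alpha_k,\beta_k$ then yields \eqref{lem:Lemma_V2_SC:ineq}. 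I expect the main obstacle to be the constant bookkeeping in the unrolling and in merging the local-drift, reference-shift, and Byzantine contributions so that every coefficient lands at or below the stated numerical constants; the conceptual content, by contrast, lives entirely in the two uses of the vanishing-gradient identity and the CE-filter matching bound.
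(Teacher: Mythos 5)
Your proposal is correct and follows essentially the same route as the paper's proof: the same one-step recursion for $e^i_{k,t}$ with conditional expectation killing the noise cross terms, the same drift bound exploiting $\nabla q^i(x^\star_{\mathcal{H}})=0$ from $2f$-redundancy, the same reference-shift decomposition $e^i_{k+1,0}=e^i_{k,\mathcal{T}}+\nabla q^i(\bar{x}_k)-\nabla q^i(\bar{x}_{k+1})$ split by Young's inequality at parameter $\alpha_k$ (the source of the $\beta_k^2/\alpha_k$ term), and the same CE-filter matching argument bounding the Byzantine contribution by eliminated honest agents to produce the $f/|\mathcal{H}|$ factor. The only deviations are immaterial constant choices in the per-step contraction ($1-\alpha_k/2$ versus the paper's $1-\alpha_k$).
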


\section{Main Results}
In this section, we present the main results of this paper, where we will study the convergence properties of Algorithm \ref{alg:cap} in two settings, namely, strong convexity and PL conditions. 
% P\L
% \;conditions. 
\subsection{Strongly convex condition}
We consider the following assumption on $q_{\mathcal{H}}(x)$.
\begin{assumption}\label{Assum:Assumption_strong_convexity}
The objective function $q_{\mathcal{H}}(.)$ is strongly convex, i.e., there exists a constant $\mu \in (0,L]$ s.t.
\begin{align}\label{eq:sc_condition}
    (y-x)^{T}(\nabla q_{\mathcal{H}}(y) - \nabla q_{\mathcal{H}}(x)) \geq \mu \|y-x\|^{2}, \ \forall x,y \in \mathbb{R}^{d}.
\end{align}
\end{assumption}
\begin{remark}
Assumption \ref{Assum:Assumption_strong_convexity} guarantees a unique solution \( x_\mathcal{H}^\star \) for problem \eqref{eq:exact_tolerence}. However, it does not require each local function \( q^i \) to be strongly convex, meaning each \( q^i \) may have multiple minimizers. Note that under the $2f$-redundancy condition, we have \( x_\mathcal{H}^\star \) lies in the intersection of the minimizer sets $\mathcal{X}_{i}^{\star}$ of  $q^i$.
\end{remark}
Our main result in this section is based on the following lemma, whose proof is presented in the Appendix.
\begin{lemma}\label{lem:Lemma_V1_SC}
For all $k \geq 0$ we have
    \begin{align}
    &\mathbb{E}[\|\bar{x}_{k+1}-x^{\star}_{\mathcal{H}}\|^{2}]\notag\\
    & \leq \Big(1-\frac{35\mu\mathcal{T}\beta_{k}}{18} + \frac{17L\mathcal{T}\beta_{k}|\mathcal{B}_{k}|}{3|\mathcal{H}|}+ 103L^{2}\mathcal{T}^{2}\beta^{2}_{k}\Big)\mathbb{E}[\|\bar{x}_{k}-x^{\star}_{\mathcal{H}}\|^{2}]\notag\\
    &\quad + \frac{72L^{2}\mathcal{T}\alpha_{k}\beta_{k}}{\mu}\mathbb{E}[\|\bar{x}_{k}-x^{\star}_{\mathcal{H}}\|^{2}] +  \frac{78\mathcal{T}\beta_{k}}{\mu}\mathbb{E}[W_{k}]\notag\\
    &\quad + 4\mathcal{T}^{2}\sigma^{2}\alpha_{k}\beta^{2}_{k} + 
    \frac{48\mathcal{T}\sigma^{2}\alpha^{2}_{k}\beta_{k}}{\mu} + \frac{32\mathcal{T}^{2}\sigma^{2}f\alpha^{2}_{k}\beta^{2}_{k}}{|\mathcal{H}|}\cdot\label{lem:Lemma_V1_SC:ineq}
\end{align}
\end{lemma}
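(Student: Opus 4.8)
The plan is to start from the exact one-step recursion for the server iterate in \eqref{eq:x_bar_update_0}, subtract $x_{\mathcal{H}}^{\star}$, and expand the squared norm after a clean decomposition. Write $\bar{x}_{k+1}-x_{\mathcal{H}}^{\star}=A_{k}+B_{k}+C_{k}$, where $A_{k}=\bar{x}_{k}-x_{\mathcal{H}}^{\star}-\mathcal{T}\beta_{k}\nabla q_{\mathcal{H}}(\bar{x}_{k})$ is the ``clean'' gradient-descent term, $B_{k}=-\frac{\beta_{k}}{|\mathcal{H}|}\sum_{i\in\mathcal{H}}\sum_{t=0}^{\mathcal{T}-1}e^{i}_{k,t}$ collects the local gradient-estimation errors, and $C_{k}$ is the bracketed Byzantine correction term. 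I would then bound $\|\bar{x}_{k+1}-x_{\mathcal{H}}^{\star}\|^{2}=\|A_{k}\|^{2}+\|B_{k}\|^{2}+\|C_{k}\|^{2}+2\langle A_{k},B_{k}\rangle+2\langle A_{k},C_{k}\rangle+2\langle B_{k},C_{k}\rangle$ and treat each piece, applying Young's inequality with carefully chosen weights to the cross terms and taking conditional expectation given $\mathcal{P}_{k,0}$ at the end.

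For $A_{k}$, I would expand $\|A_{k}\|^{2}=\|\bar{x}_{k}-x_{\mathcal{H}}^{\star}\|^{2}-2\mathcal{T}\beta_{k}\langle\bar{x}_{k}-x_{\mathcal{H}}^{\star},\nabla q_{\mathcal{H}}(\bar{x}_{k})\rangle+\mathcal{T}^{2}\beta_{k}^{2}\|\nabla q_{\mathcal{H}}(\bar{x}_{k})\|^{2}$. Since $\nabla q_{\mathcal{H}}(x_{\mathcal{H}}^{\star})=0$, Assumption~\ref{Assum:Assumption_strong_convexity} lower-bounds the inner product by $\mu\|\bar{x}_{k}-x_{\mathcal{H}}^{\star}\|^{2}$ and Assumption~\ref{eq:Assumption_Lipschitz} gives $\|\nabla q_{\mathcal{H}}(\bar{x}_{k})\|\le L\|\bar{x}_{k}-x_{\mathcal{H}}^{\star}\|$, producing the base contraction factor $1-2\mu\mathcal{T}\beta_{k}+L^{2}\mathcal{T}^{2}\beta_{k}^{2}$. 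The cross term $2\langle A_{k},B_{k}\rangle$ is dominated by $2\langle\bar{x}_{k}-x_{\mathcal{H}}^{\star},B_{k}\rangle$; splitting it with a Young weight of order $\mu\mathcal{T}\beta_{k}$ yields a small $\mathcal{O}(\mu\mathcal{T}\beta_{k})\|\bar{x}_{k}-x_{\mathcal{H}}^{\star}\|^{2}$ penalty plus a term of order $\frac{1}{\mu\mathcal{T}\beta_{k}}\|B_{k}\|^{2}$. To turn the latter into the stated $W_{k}$ and noise contributions, I would control the inner-loop errors $e^{i}_{k,t}$ for $t\ge1$ in terms of $e^{i}_{k,0}$: from \eqref{alg:yi}, $e^{i}_{k,t+1}=(1-\alpha_{k})e^{i}_{k,t}+\alpha_{k}\big(\nabla q^{i}(x^{i}_{k,t})-\nabla q^{i}(\bar{x}_{k})\big)+\alpha_{k}\big(\nabla q^{i}(x^{i}_{k,t};\Delta^{i}_{k,t})-\nabla q^{i}(x^{i}_{k,t})\big)$, so Lipschitz continuity, the bound on the accumulated drift $\|x^{i}_{k,t}-\bar{x}_{k}\|$, and Assumption~\ref{eq:Assumption_noise} give each $\|e^{i}_{k,t}\|^{2}$ in terms of $\|e^{i}_{k,0}\|^{2}$, an $\mathcal{O}(\beta_{k}^{2}/\alpha_{k}^{2})$ drift proportional to $\|\bar{x}_{k}-x_{\mathcal{H}}^{\star}\|^{2}$, and an $\mathcal{O}(\alpha_{k}^{2})\sigma^{2}$ noise part. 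Averaging over $i\in\mathcal{H}$ recovers the $\frac{78\mathcal{T}\beta_{k}}{\mu}\mathbb{E}[W_{k}]$ term, the $\frac{72L^{2}\mathcal{T}\alpha_{k}\beta_{k}}{\mu}\mathbb{E}[\|\bar{x}_{k}-x_{\mathcal{H}}^{\star}\|^{2}]$ term, and the noise terms $\frac{48\mathcal{T}\sigma^{2}\alpha_{k}^{2}\beta_{k}}{\mu}$ and $4\mathcal{T}^{2}\sigma^{2}\alpha_{k}\beta_{k}^{2}$.

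The main obstacle is the Byzantine term $C_{k}$, and this is where the comparative-elimination filter is indispensable. Since the server retains the $N-f$ smallest distances and $|\mathcal{B}_{k}|=|\mathcal{H}\setminus\mathcal{H}_{k}|$, every surviving Byzantine agent $i\in\mathcal{B}_{k}$ has $\|x^{i}_{k,\mathcal{T}}-\bar{x}_{k}\|$ no larger than the distance of any eliminated honest agent, so $\sum_{i\in\mathcal{B}_{k}}\|x^{i}_{k,\mathcal{T}}-\bar{x}_{k}\|\le\sum_{j\in\mathcal{H}\setminus\mathcal{H}_{k}}\|x^{j}_{k,\mathcal{T}}-\bar{x}_{k}\|$, which bounds $\|C_{k}\|$ by a multiple of the honest drift over $\mathcal{T}$ steps. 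Substituting \eqref{eq:x_update_T}, each honest drift is $x^{j}_{k,\mathcal{T}}-\bar{x}_{k}=-\beta_{k}\sum_{t}e^{j}_{k,t}-\mathcal{T}\beta_{k}\nabla q^{j}(\bar{x}_{k})$; crucially, by the $2f$-redundancy identity \eqref{eq:redundancy_condition} the common minimizer satisfies $\nabla q^{j}(x_{\mathcal{H}}^{\star})=0$ for every honest $j$, so $\|\nabla q^{j}(\bar{x}_{k})\|\le L\|\bar{x}_{k}-x_{\mathcal{H}}^{\star}\|$. Hence $\|C_{k}\|\le\frac{2\beta_{k}}{|\mathcal{H}|}\sum_{j\in\mathcal{H}\setminus\mathcal{H}_{k}}\big(\sum_{t}\|e^{j}_{k,t}\|+\mathcal{T}L\|\bar{x}_{k}-x_{\mathcal{H}}^{\star}\|\big)$, which is itself $\mathcal{O}(\beta_{k})$. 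The dominant cross term $2\langle\bar{x}_{k}-x_{\mathcal{H}}^{\star},C_{k}\rangle\le 2\|\bar{x}_{k}-x_{\mathcal{H}}^{\star}\|\,\|C_{k}\|$ is therefore \emph{linear} in $\beta_{k}$, giving precisely the Byzantine penalty $\frac{17L\mathcal{T}\beta_{k}|\mathcal{B}_{k}|}{3|\mathcal{H}|}\|\bar{x}_{k}-x_{\mathcal{H}}^{\star}\|^{2}$ after bounding the gradient part, with the error part $\sum_{t}\|e^{j}_{k,t}\|$ feeding additional contributions into the $W_{k}$, the noise, and the $\frac{32\mathcal{T}^{2}\sigma^{2}f\alpha_{k}^{2}\beta_{k}^{2}}{|\mathcal{H}|}$ terms via Young's inequality (using $|\mathcal{B}_{k}|\le f$).

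Finally, I would collect all contributions and absorb the various small $\mathcal{O}(\mu\mathcal{T}\beta_{k})$ penalties generated by the Young splittings of the $B_{k}$ and $C_{k}$ cross terms into the base contraction: these sum to at most $\tfrac{1}{18}\mu\mathcal{T}\beta_{k}\|\bar{x}_{k}-x_{\mathcal{H}}^{\star}\|^{2}$, which converts the factor $2\mu\mathcal{T}\beta_{k}$ into $\frac{35\mu\mathcal{T}\beta_{k}}{18}$, while the genuinely nonnegative penalties $\frac{17L\mathcal{T}\beta_{k}|\mathcal{B}_{k}|}{3|\mathcal{H}|}$, $103L^{2}\mathcal{T}^{2}\beta_{k}^{2}$, and $\frac{72L^{2}\mathcal{T}\alpha_{k}\beta_{k}}{\mu}$ remain as displayed. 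The step-size conditions $\beta_{k}\le\alpha_{k}\le1$ and $L\mathcal{T}\beta_{k}\le1$ from \eqref{eq:step_sizes} are used throughout to dominate higher-order $L^{2}\mathcal{T}^{2}\beta_{k}^{2}$ factors and to keep the constants at the stated values, which yields \eqref{lem:Lemma_V1_SC:ineq}. The most delicate bookkeeping will be keeping the Byzantine penalty linear in $\beta_{k}$ (rather than $\mathcal{O}(1)$), which hinges entirely on combining the CE filter inequality with $\nabla q^{j}(x_{\mathcal{H}}^{\star})=0$ from redundancy.
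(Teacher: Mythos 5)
Your proposal is correct and follows essentially the same route as the paper: the same decomposition of $\bar{x}_{k+1}-x^{\star}_{\mathcal{H}}$ into the clean gradient-descent term, the accumulated local-error term, and the Byzantine correction $\mathcal{E}_{x}$ (the paper groups the last two before squaring, which is equivalent), with strong convexity and Lipschitzness handling the first piece, the inner-loop error recursion (the paper's Lemmas \ref{lem:x_k_t_minus_x_bar_squared} and \ref{lem:e_k_t_V12}) controlling $\sum_{t}\|e^{i}_{k,t}\|^{2}$, and the CE-filter inequality combined with $\nabla q^{j}(x^{\star}_{\mathcal{H}})=0$ from $2f$-redundancy keeping the Byzantine penalty linear in $\beta_{k}$. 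Even your bookkeeping of the Young weights matches the paper's (the $\mu\mathcal{T}\beta_{k}/18$ splitting is exactly what turns $2\mu\mathcal{T}\beta_{k}$ into $35\mu\mathcal{T}\beta_{k}/18$).
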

To study the convergence of Algorithm \ref{alg:cap} under Assumption \ref{Assum:Assumption_strong_convexity}, we consider the following Lyapunov function $V_{k}$ 
\begin{align}\label{eq:Lyapunov_SC}
    V_{k} = \|\bar{x}_{k} - x^{\star}_{\mathcal{H}}\|^{2} + W_{k}.
\end{align}

\begin{thm}\label{Theorem:SC_Tg1}
    Let $\{x^{i}_{k}\}$ and $\{y^{i}_{k}\}$ be generated by Algorithm \ref{alg:cap} for $\mathcal{T}>1$. 
    Suppose the step sizes $\alpha_{k}$ and $\beta_{k}$ in \eqref{eq:step_sizes} satisfying 
\begin{align}\label{eq:alpha_beta_choice_SC}
\begin{aligned}
         \alpha_{k} \leq \frac{\mu}{(8)^{4}(L+1)^{4}\mathcal{T}}, \quad \beta_{k} \leq \frac{\mu}{(12)^{4}L^{2}\mathcal{T}}, \\
    \frac{\beta_{k}}{\alpha_{k}} \leq \frac{\mu}{(14)^{4}(L+1)^{4}\mathcal{T}}\cdot
    \end{aligned}
    \end{align}
    Then we have
\begin{align}\label{eq:Gamma_Theorem}
    &\mathbb{E}[V_{k+1}]\notag\\
    &\leq \big(1-\frac{23\mu\mathcal{T}\beta_{k}}{12} + \frac{17L\mathcal{T}\beta_{k}|\mathcal{B}_{k}|}{3|\mathcal{H}|}\big)\mathbb{E}[V_k]\notag\\
    &\quad + \frac{150(L+1)^{3}\mathcal{T}^{2}\sigma^{2}\alpha^{2}_{k}}{\mu} + \frac{128(L+1)^{2}\mathcal{T}^{2}\sigma^{2}f\alpha^{2}_{k}}{|\mathcal{H}|}\cdot
\end{align}
In addition, if the following condition holds 
    \begin{align}\label{eq:filter_condition}
        \frac{|\mathcal{B}_{k}|}{|\mathcal{H}|} = \frac{f}{N-f} \leq \frac{\mu}{3L},
    \end{align}
  and $C_{\alpha}, C_{\beta}$ and $h$ are chosen as
\begin{align}\label{eq:step_size_parameter_SC}
\begin{aligned}
         & C_{\alpha} \geq \frac{(84)^{4}(L+1)^{4}}{6\mu^{2}}, \quad C_{\beta} = \frac{72}{\mu\mathcal{T}}\\
         & h \geq \max\Big\{\frac{(8)^{4}(L+1)^{4}\mathcal{T}C_{\alpha}}{\mu};\ \frac{(72)^{4}L^{2}}{18\mu^{2}}\Big\},
         \end{aligned}
    \end{align}
then we obtain the following.
\begin{align}\label{eq:convergence_rate_SC}
    \mathbb{E}[V_{k+1}] &\leq \frac{h^{2}\mathbb{E}[V_{0}]}{(1+h+k)^{2}} + \frac{150(L+1)^{3}\mathcal{T}^{2}\sigma^{2}C^{2}_{\alpha}}{\mu(1+h+k)} \notag\\
    &\quad+ \frac{128(L+1)^{2}\mathcal{T}^{2}\sigma^{2}fC^{2}_{\alpha}}{(1+h+k)|\mathcal{H}|}\cdot
\end{align}    
\end{thm}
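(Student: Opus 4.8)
The plan is to derive the one-step bound \eqref{eq:Gamma_Theorem} by summing the two recursions of Lemmas~\ref{lem:Lemma_V2_SC} and \ref{lem:Lemma_V1_SC}, and then to unroll the resulting scalar recursion by induction. Recall $V_k = \|\bar x_k - x^\star_{\mathcal{H}}\|^2 + W_k$. Adding \eqref{lem:Lemma_V1_SC:ineq} to \eqref{lem:Lemma_V2_SC:ineq} bounds $\mathbb{E}[V_{k+1}]$ by $c^x_k\,\mathbb{E}[\|\bar x_k - x^\star_{\mathcal{H}}\|^2] + c^W_k\,\mathbb{E}[W_k]$ plus a collection of $\sigma^2$-terms, where $c^x_k$ gathers the $\|\bar x_k - x^\star_{\mathcal{H}}\|^2$-coefficients of both lemmas and $c^W_k$ gathers the $W_k$-coefficients. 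The target is to show that each of $c^x_k$ and $c^W_k$ is at most the common factor $1 - \tfrac{23\mu\mathcal{T}\beta_k}{12} + \tfrac{17L\mathcal{T}\beta_k|\mathcal{B}_k|}{3|\mathcal{H}|}$; once both coefficients share this bound, the identity $V_k = \|\bar x_k - x^\star_{\mathcal{H}}\|^2 + W_k$ immediately collapses the two terms into a single $\mathbb{E}[V_k]$, giving \eqref{eq:Gamma_Theorem}.

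Establishing these two coefficient bounds is the heart of the argument and relies entirely on the step-size conditions \eqref{eq:alpha_beta_choice_SC}. For $c^x_k$, the leading term $1 - \tfrac{35\mu\mathcal{T}\beta_k}{18}$ carries a slack of $\tfrac{35}{18} - \tfrac{23}{12} = \tfrac{1}{36}$ in the $\mu\mathcal{T}\beta_k$ coefficient, and I would show that the four higher-order pieces $103L^2\mathcal{T}^2\beta_k^2$, $\tfrac{72L^2\mathcal{T}\alpha_k\beta_k}{\mu}$, $14L^3\mathcal{T}^2\alpha_k\beta_k$, and $\tfrac{300L^4\mathcal{T}^2\beta_k^2}{\alpha_k}$ together fit inside $\tfrac{\mu\mathcal{T}\beta_k}{36}$ via the bounds on $\alpha_k$, $\beta_k$, and especially $\beta_k/\alpha_k$ in \eqref{eq:alpha_beta_choice_SC}. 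For $c^W_k = 1 - \tfrac{\alpha_k}{2} + 126(L+1)^2\mathcal{T}^2\beta_k + \tfrac{78\mathcal{T}\beta_k}{\mu}$, the fast-time-scale contraction $-\tfrac{\alpha_k}{2}$ must dominate the positive terms and still supply $-\tfrac{23\mu\mathcal{T}\beta_k}{12}$; this is possible precisely because the time-scale-separation condition $\beta_k/\alpha_k \le \mu/\big((14)^4(L+1)^4\mathcal{T}\big)$ makes $\tfrac{78\mathcal{T}\beta_k}{\mu}$ and $126(L+1)^2\mathcal{T}^2\beta_k$ negligible against $\alpha_k$. This decoupling of the two mutually coupled recursions through the separation of the two step-sizes is the step I expect to be most delicate. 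The remaining $\sigma^2$-terms are then all of order $\alpha_k^2$ or smaller after replacing each $\beta_k$ by $\alpha_k$ (using $\beta_k \le \alpha_k$), and collecting them produces the two noise terms in \eqref{eq:Gamma_Theorem}.

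To obtain the rate \eqref{eq:convergence_rate_SC}, I would first use the filter condition \eqref{eq:filter_condition} to bound $\tfrac{17L\mathcal{T}\beta_k|\mathcal{B}_k|}{3|\mathcal{H}|} \le \tfrac{17\mu\mathcal{T}\beta_k}{9}$, so that the contraction factor in \eqref{eq:Gamma_Theorem} becomes $1 - \mu\mathcal{T}\beta_k\big(\tfrac{23}{12} - \tfrac{17}{9}\big) = 1 - \tfrac{\mu\mathcal{T}\beta_k}{36}$. Substituting $\beta_k = \tfrac{C_\beta}{1+h+k}$ with $C_\beta = \tfrac{72}{\mu\mathcal{T}}$ turns this into the critical contraction $1 - \tfrac{2}{1+h+k}$, and with $\alpha_k^2 = \tfrac{C_\alpha^2}{(1+h+k)^2}$ the recursion reads $\mathbb{E}[V_{k+1}] \le \big(1 - \tfrac{2}{1+h+k}\big)\mathbb{E}[V_k] + \tfrac{b}{(1+h+k)^2}$ with $b = \tfrac{150(L+1)^3\mathcal{T}^2\sigma^2 C_\alpha^2}{\mu} + \tfrac{128(L+1)^2\mathcal{T}^2\sigma^2 f C_\alpha^2}{|\mathcal{H}|}$. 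Before iterating, I must confirm that the prescribed $C_\alpha$, $C_\beta$, $h$ force all three inequalities in \eqref{eq:alpha_beta_choice_SC} at every $k$; since $\alpha_k, \beta_k$ peak at $k=0$, the two branches of the $\max$ in \eqref{eq:step_size_parameter_SC} pin $\alpha_0$ and $\beta_0$ below their thresholds, while $\beta_k/\alpha_k = C_\beta/C_\alpha$ is constant and the lower bound on $C_\alpha$ enforces the ratio condition. Finally, writing $n_k = h+k$, the claimed bound $\mathbb{E}[V_k] \le \tfrac{h^2\mathbb{E}[V_0]}{n_k^2} + \tfrac{b}{n_k}$ propagates by induction: the initial-condition term uses $(n_{k+1}-2)n_{k+1} = n_k^2 - 1 \le n_k^2$ and the noise term uses $n_k \le n_{k+1}$. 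This last step is exactly where the coefficient $2$, and hence the choice $C_\beta = \tfrac{72}{\mu\mathcal{T}}$, is indispensable, since any smaller contraction would degrade the $\mathcal{O}(1/k)$ decay.
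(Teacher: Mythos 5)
Your proposal is correct and follows essentially the same route as the paper: summing Lemmas~\ref{lem:Lemma_V2_SC} and \ref{lem:Lemma_V1_SC}, absorbing the residual coefficients of $\mathbb{E}[\|\bar{x}_{k}-x^{\star}_{\mathcal{H}}\|^{2}]$ and $\mathbb{E}[W_{k}]$ into the common contraction factor via \eqref{eq:alpha_beta_choice_SC} (with the same $\tfrac{1}{36}$ slack and the same reliance on the time-scale ratio for the $W_k$ coefficient), then using \eqref{eq:filter_condition} and $C_\beta = \tfrac{72}{\mu\mathcal{T}}$ to obtain the factor $1-\tfrac{2}{1+h+k}$. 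Your unrolling by induction on the bound $\mathbb{E}[V_k]\le \tfrac{h^2\mathbb{E}[V_0]}{(h+k)^2}+\tfrac{b}{h+k}$ is algebraically equivalent to the paper's step of multiplying the recursion by $(1+h+k)^2$ and telescoping, so it constitutes the same argument in a different form.
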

\begin{remark}
Our result in \eqref{eq:convergence_rate_SC} implies that the sequence $\{\bar{x}_{k}\}$ generated by Algorithm \ref{alg:cap} converges to $x^{\star}_{\mathcal{H}}$ in the mean-square sense at a rate $\mathcal{O}(1/k)$, which is the same rate as in the Byzantine-free setting. In addition, our convergence complexity bound depends on the ratio $f/\mathcal{H}$, which is similar to the result in \cite{gupta2023byzantine}.  
     % The theorem states that when $q_{\mathcal{H}}$ is strongly convex, under the step size conditions given by \eqref{eq:alpha_beta_choice_SC}, the update for $V_{k}$, as indicated by \eqref{eq:Gamma_Theorem} is primarily influenced by three factors: first, the ratio of faulty agents to honest agents, denoted by $\frac{f}{|\mathcal{H}|}$; second, the step size $\alpha_{k}$; and third, the noise level in the stochastic gradient. Moreover, for a sufficiently small ratio of $\frac{f}{|\mathcal{H}|}$, as specified in \eqref{eq:filter_condition}, and with appropriate choices of step size parameters as outlined in \eqref{eq:step_size_parameter_SC}, $\bar{x}_{k}$ converges to the optimal solution in the sense of $V_{k}$. The overall convergence rate is then dominated by the noise terms, decaying at a rate of $\mathcal{O}\left(\frac{1}{k}\right)$.
\end{remark}
\begin{proof}
    By adding \eqref{lem:Lemma_V2_SC:ineq} into \eqref{lem:Lemma_V1_SC:ineq} we have
    \begin{align} %\label{eq:Gamma_v0}
        &\mathbb{E}[V_{k+1}]\notag\\
    &\leq \Big(1-\frac{35\mu\mathcal{T}\beta_{k}}{18} + \frac{17L\mathcal{T}\beta_{k}|\mathcal{B}_{k}|}{3|\mathcal{H}|} + 103L^{2}\mathcal{T}^{2}\beta^{2}_{k}\Big)\mathbb{E}[\|\bar{x}_{k}-x^{\star}_{\mathcal{H}}\|^{2}]\notag\\
    &\quad + \Big(\frac{300L^{4}\mathcal{T}^{2}\beta^{2}_{k}}{\alpha_{k}} + \frac{72L^{2}\mathcal{T}\alpha_{k}\beta_{k}}{\mu}\Big)\mathbb{E}[\|\bar{x}_{k}-x^{\star}_{\mathcal{H}}\|^{2}]\notag\\
    &\quad+ 14L^{3}\mathcal{T}^{2}\alpha_{k}\beta_{k}\mathbb{E}[\|\bar{x}_{k}-x^{\star}_{\mathcal{H}}\|^{2}]\notag\\ 
    &\quad+\Big(1-\frac{\alpha_{k}}{2} + 126(L+1)^{2}\mathcal{T}^{2}\beta_{k} + \frac{78\mathcal{T}\beta_{k}}{\mu}\Big)\mathbb{E}[W_{k}]\notag\\
    &\quad+ 2\mathcal{T}\sigma^{2}\alpha^{2}_{k} + 4\mathcal{T}^{2}\sigma^{2}\alpha_{k}\beta^{2}_{k} + \frac{42\mathcal{T}^{2}\sigma^{2}\alpha_{k}\beta_{k}}{\mu}\notag\\
    &\quad + 6L^{2}\mathcal{T}^{2}\sigma^{2}\alpha^{2}_{k}\beta^{2}_{k} + 96L^{2}\mathcal{T}^{2}\sigma^{2}\beta^{2}_{k}\notag\\
    &\quad + \frac{32\mathcal{T}^{2}\sigma^{2}f\alpha^{2}_{k}\beta^{2}_{k}}{|\mathcal{H}|} + \frac{96L^{2}\mathcal{T}^{2}\sigma^{2}f\alpha_{k}\beta^{2}_{k}}{|\mathcal{H}|}\allowdisplaybreaks\notag\\
     &\leq \Big(1-\frac{35\mu\mathcal{T}\beta_{k}}{18} + \frac{17L\mathcal{T}\beta_{k}|\mathcal{B}_{k}|}{3|\mathcal{H}|} + 103L^{2}\mathcal{T}^{2}\beta^{2}_{k}\Big)\mathbb{E}[\|\bar{x}_{k}-x^{\star}_{\mathcal{H}}\|^{2}]\notag\\
    &\quad + \Big(\frac{300L^{4}\mathcal{T}^{2}\beta^{2}_{k}}{\alpha_{k}} + \frac{72L^{2}\mathcal{T}\alpha_{k}\beta_{k}}{\mu} \Big)\mathbb{E}[\|\bar{x}_{k}-x^{\star}_{\mathcal{H}}\|^{2}]\notag\\
    &\quad+ 14L^{3}\mathcal{T}^{2}\alpha_{k}\beta_{k}\mathbb{E}[\|\bar{x}_{k}-x^{\star}_{\mathcal{H}}\|^{2}]\notag\\
    &\quad+\Big(1-\frac{\alpha_{k}}{2} + 126(L+1)^{2}\mathcal{T}^{2}\beta_{k} + \frac{78\mathcal{T}\beta_{k}}{\mu}\Big)\mathbb{E}[W_{k}]\notag\\
    &\quad + \frac{150(L+1)^{3}\mathcal{T}^{2}\sigma^{2}\alpha^{2}_{k}}{\mu} + \frac{128(L+1)^{2}\mathcal{T}^{2}\sigma^{2}f\alpha^{2}_{k}}{|\mathcal{H}|}\notag\\
    &\leq \Big(1-\frac{69\mu\mathcal{T}\beta_{k}}{36} + \frac{17L\mathcal{T}\beta_{k}|\mathcal{B}_{k}|}{3|\mathcal{H}|}\Big)\mathbb{E}[V_k] \notag\\
    &\quad + \Big(-\frac{\mu\mathcal{T}\beta_{k}}{36} + 103L^{2}\mathcal{T}^{2}\beta^{2}_{k} + 14L^{3}\mathcal{T}^{2}\alpha_{k}\beta_{k}\Big)\mathbb{E}[\|\bar{x}_{k} - x^{\star}_{\mathcal{H}}\|^{2}]\notag\\
    &\quad +\Big(\frac{72L^{2}\mathcal{T}\alpha_{k}\beta_{k}}{\mu} + \frac{300L^{4}\mathcal{T}^{2}\beta^{2}_{k}}{\alpha_{k}}\Big)\mathbb{E}[\|\bar{x}_{k} - x^{\star}_{\mathcal{H}}\|^{2}]\notag\\
    &\quad +\Big(-\frac{\alpha_{k}}{2} + \frac{69\mu\mathcal{T}\beta_{k}}{36} + 126(L+1)^{2}\mathcal{T}\beta_{k} + \frac{78\mathcal{T}\beta_{k}}{\mu}\Big)\mathbb{E}[W_{k}]\notag\\
    &\quad + \frac{150(L+1)^{3}\mathcal{T}^{2}\sigma^{2}\alpha^{2}_{k}}{\mu} + \frac{128(L+1)^{2}\mathcal{T}^{2}\sigma^{2}f\alpha^{2}_{k}}{|\mathcal{H}|},\notag
    \end{align}
    where the second inequality is obtained using $\beta_{k}\leq \alpha_{k}\leq 1$ and $\mu \leq L$. 
    Using $\mu \leq L$ we express the above inequality as
    \begin{align} %\label{eq:Gamma_v1}
        &\mathbb{E}[V_{k+1}]\notag\\
        &\leq \Big(1-\frac{69\mu\mathcal{T}\beta_{k}}{36} + \frac{17L\mathcal{T}\beta_{k}|\mathcal{B}_{k}|}{3|\mathcal{H}|}\Big)\mathbb{E}[V_k] \notag\\
        &\quad+\Big(-\frac{\mu\mathcal{T}\beta_{k}}{36} +103L^{2}\mathcal{T}^{2}\beta^{2}_{k}\Big)\mathbb{E}[\|\bar{x}_{k}-x^{\star}_{k}\|^{2}]\notag\\
        &\quad+ \Big(\frac{86(L+1)^{4}\mathcal{T}^{2}\alpha_{k}\beta_{k}}{\mu} + \frac{300(L+1)^{4}\mathcal{T}\beta_{k}}{\mu}\Big)\mathbb{E}[\|\bar{x}_{k}-x^{\star}_{\mathcal{H}}\|^{2}]\notag\\
        &\quad+\Big(-\frac{\alpha_{k}}{2} +\frac{206(L+1)^{4}\mathcal{T}\beta_{k}}{\mu}\Big)\mathbb{E}[W_{k}]\notag\\
        &\quad + \frac{150(L+1)^{3}\mathcal{T}^{2}\sigma^{2}\alpha^{2}_{k}}{\mu} + \frac{128(L+1)^{2}\mathcal{T}^{2}\sigma^{2}f\alpha^{2}_{k}}{|\mathcal{H}|}\notag\\
        &\leq \Big(1-\frac{69\mu\mathcal{T}\beta_{k}}{36} + \frac{17L\mathcal{T}\beta_{k}|\mathcal{B}_{k}|}{3|\mathcal{H}|}\Big)\mathbb{E}[V_k] \notag\\
        &\quad + \frac{150(L+1)^{3}\mathcal{T}^{2}\sigma^{2}\alpha^{2}_{k}}{\mu} + \frac{128(L+1)^{2}\mathcal{T}^{2}\sigma^{2}f\alpha^{2}_{k}}{|\mathcal{H}|},
        \notag
        % &\quad - S_{1} - S_{2},
    \end{align}
where the last inequality we use \eqref{eq:alpha_beta_choice_SC} to have 
\begin{align}
        &0 \leq \frac{\mu\mathcal{T}\beta_{k}}{36} -103L^{2}\mathcal{T}^{2}\beta^{2}_{k} - \frac{86(L+1)^{4}\mathcal{T}^{2}\alpha_{k}\beta_{k}}{\mu}  \notag\\
        &\qquad -\frac{300(L+1)^{4}\mathcal{T}^{2}\beta^{2}_{k}}{\alpha_{k}},\notag\\
        &0\leq  \frac{\alpha_{k}}{2} - \frac{206(L+1)^{4}\mathcal{T}\beta_{k}}{\mu}\cdot\notag
    \end{align}
% Next under the following conditions of $\alpha_{k}$ and $\beta_{k}$ from \eqref{eq:alpha_beta_choice_SC}
% \begin{align*}
%     &\alpha_{k} \leq \frac{\mu}{(8)^{4}(L+1)^{4}\mathcal{T}}, \quad \beta_{k} \leq \frac{\mu}{(12)^{4}L^{2}\mathcal{T}}, \\
%     &\frac{\beta_{k}}{\alpha_{k}} \leq \frac{\mu}{(14)^{4}(L+1)^{4}\mathcal{T}},
% \end{align*}
% we observe that $S_{1}$ and $S_{2}$ terms are non-negetive. 

% Hence we arrive at \eqref{eq:Gamma_Theorem} from \eqref{eq:Gamma_v1} i.e
% \begin{align} %\label{eq:Gamma_v2}
%     &\mathbb{E}[V_{k+1}]\notag\\
%     &\leq \Big(1-\frac{23\mu\mathcal{T}\beta_{k}}{12} + \frac{17L\mathcal{T}\beta_{k}|\mathcal{B}_{k}|}{3|\mathcal{H}|}\Big)\mathbb{E}[V_k]\notag\\
%     &\quad + \frac{150(L+1)^{3}\mathcal{T}^{2}\sigma^{2}\alpha^{2}_{k}}{\mu} + \frac{128(L+1)^{2}\mathcal{T}^{2}\sigma^{2}f\alpha^{2}_{k}}{|\mathcal{H}|}\notag.
% \end{align}

Next, to show \eqref{eq:convergence_rate_SC} we observe that the conditions \eqref{eq:step_size_parameter_SC} satisfy those in \eqref{eq:alpha_beta_choice_SC}. Thus, we have
\begin{align}
    &\mathbb{E}[V_{k+1}]\notag\\
    &\leq \Big(1-\mathcal{T}\beta_{k}\Big(\frac{23\mu}{12} - \frac{17L|\mathcal{B}_{k}|}{3|\mathcal{H}|}\Big)\Big)\mathbb{E}[V_k]\notag\\
    &\quad + \frac{150(L+1)^{3}\mathcal{T}^{2}\sigma^{2}\alpha^{2}_{k}}{\mu} + \frac{128(L+1)^{2}\mathcal{T}^{2}\sigma^{2}f\alpha^{2}_{k}}{|\mathcal{H}|}.
    \notag\\
    &\leq \Big(1-\frac{\mu\mathcal{T}\beta_{k}}{36}\Big)\mathbb{E}[V_k]\notag\\
    &\quad + \frac{150(L+1)^{3}\mathcal{T}^{2}\sigma^{2}\alpha^{2}_{k}}{\mu} + \frac{128(L+1)^{2}\mathcal{T}^{2}\sigma^{2}f\alpha^{2}_{k}}{|\mathcal{H}|}\notag,
\end{align}
where the last inequality is due to \eqref{eq:filter_condition} 
\begin{align*}
    \frac{23\mu}{12} - \frac{17L|\mathcal{B}_{k}|}{3|\mathcal{H}|} \geq \frac{\mu}{36}\cdot
\end{align*}
% using which \eqref{eq:Gamma_v3} becomes
% \begin{align}
%     &\mathbb{E}[V_{k+1}]\notag\\
%     &\leq \Big(1-\frac{\mu\mathcal{T}\beta_{k}}{36}\Big)\mathbb{E}[V_k]\notag\\
%     &\quad + \frac{150(L+1)^{3}\mathcal{T}^{2}\sigma^{2}\alpha^{2}_{k}}{\mu} + \frac{128(L+1)^{2}\mathcal{T}^{2}\sigma^{2}f\alpha^{2}_{k}}{|\mathcal{H}|}.\notag
% \end{align}
Using $\beta_{k} = \frac{72}{\mu\mathcal{T}(1+h+k)}$ we obtain from above
\begin{align*}
    \mathbb{E}[V_{k+1}]&\leq \Big(1-\frac{2}{1+h+k}\Big)\mathbb{E}[V_{k}]\notag\\
    &\quad + \frac{150(L+1)^{3}\mathcal{T}^{2}\sigma^{2}C^{2}_{\alpha}}{\mu(1+h+k)^{2}} + \frac{128(L+1)^{2}\mathcal{T}^{2}\sigma^{2}fC^{2}_{\alpha}}{(1+h+k)^{2}|\mathcal{H}|},
\end{align*}
which by multiplying both sides  by $(1+h+k)^2$ gives
\begin{align*}
    &(1+h+k)^{2}\mathbb{E}[V_{k+1}]\notag\\
    &\leq (h+k)^{2}\mathbb{E}[V_{k}] \notag\\
     &\quad+ \frac{150(L+1)^{3}\mathcal{T}^{2}\sigma^{2}C^{2}_{\alpha}}{\mu} + \frac{128(L+1)^{2}\mathcal{T}^{2}\sigma^{2}fC^{2}_{\alpha}}{|\mathcal{H}|}\notag\\
    &\leq h^{2}\mathbb{E}[V_{0}]+ \frac{150(L+1)^{3}\mathcal{T}^{2}\sigma^{2}C^{2}_{\alpha}(k+1)}{\mu} \notag\\
    &\quad+ \frac{128(L+1)^{2}\mathcal{T}^{2}\sigma^{2}fC^{2}_{\alpha}(k+1)}{|\mathcal{H}|}\cdot
\end{align*}
By dividing both sides of the above inequality by $(1+h+k)^{2}$ we immediately obtain \eqref{eq:convergence_rate_SC}, which concludes our proof. 
\end{proof}
\subsection{Non-convex satisfying PL condition}\label{sec:PL}
% \tdoan{See how I wrote SC section. Repeat it here.}
In this section, we will present the results for the case where  $q_{\mathcal{H}}(x)$ satisfy the so-called P\L\; condition presented below.
 \begin{assumption}\label{eq:Assumption_PL_condition} There exists a constant $\mu>0$ s.t. 
\begin{align}\label{eq:PL condition}
    \hspace{-.3cm}\frac{1}{2}\|\nabla q_{\mathcal{H}}(\Bar{x}_{k})\|^{2} \geq \mu (q_{\mathcal{H}}(\Bar{x}_{k}) - q_{\mathcal{H}}(x^{\star}_{\mathcal{H}})) \geq \frac{\mu^{2}}{2}\|\Bar{x}_{k} - x^{\star}_{\mathcal{H}}\|^{2}.
\end{align}
\end{assumption}
% \begin{remark}
% Assumption \ref{Assum:Assumption_strong_convexity} does not guarantee a unique solution $x_\mathcal{H}^\star$ for problem \eqref{eq:exact_tolerence}. Instead, $x_\mathcal{H}^\star$ represents the set of minimizers for $q_{\mathcal{H}}$. Note that under the $2f$-redundancy condition, $x_\mathcal{H}^\star$ is contained within the intersection of the minimizer sets $\mathcal{X}_{i}^{\star}$ corresponding to $q^i$.
% \end{remark}
Next we consider following lemma, where we present its proof in the Appendix.
\begin{lemma}\label{lem:Lemma_V1_PL}
%The difference between $\nabla q_{\mathcal{H}}(\Bar{x}_{k+1})$ and $\nabla q_{\mathcal{H}}(\Bar{x}_{k})$ where $\Bar{x}_{k}$ be defined as in line $15$ in Algorithm \ref{alg:cap} satisfy the following 
We have for all $k\geq 0$
    \begin{align}\label{lem:Lemma_V1_PL:ineq}
        &\mathbb{E}[ q_{\mathcal{H}}(\Bar{x}_{k+1}) - q_{\mathcal{H}}(\Bar{x}_{k})]\notag\\
        &\leq \Big(-\frac{5\mathcal{T}\beta_{k}}{6} + \frac{2L\mathcal{T}\beta_{k}|\mathcal{B}_{k}|}{|\mathcal{H}|} + \frac{110L^{3}\mathcal{T}^{2}\beta^{2}_{k}}{\mu^{2}}\Big)\mathbb{E}[\|\nabla q_{\mathcal{H}}(\bar{x}_{k})\|^{2}]\notag\\
    & \quad +50\mathcal{T}\beta_{k}\mathbb{E}[W_{k}] + 30\mathcal{T}\sigma^{2}\alpha^{2}_{k} + 16L\mathcal{T}^{2}\sigma^{2}\alpha_{k}\beta^{2}_{k} + \frac{16\mathcal{T}^{2}\sigma^{2}f\alpha^{2}_{k}}{|\mathcal{H}|}\cdot
    \end{align}
\end{lemma}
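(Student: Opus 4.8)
The plan is to start from the $L$-smoothness of $q_{\mathcal{H}}$, which follows from Assumption \ref{eq:Assumption_Lipschitz} since $q_{\mathcal{H}}$ is an average of functions with $L$-Lipschitz gradients. Applying the descent lemma to the update \eqref{eq:x_bar_update_0} gives
\begin{align*}
q_{\mathcal{H}}(\bar{x}_{k+1}) - q_{\mathcal{H}}(\bar{x}_k) \leq \langle \nabla q_{\mathcal{H}}(\bar{x}_k),\, \bar{x}_{k+1}-\bar{x}_k\rangle + \frac{L}{2}\|\bar{x}_{k+1}-\bar{x}_k\|^2.
\end{align*}
I would write $\bar{x}_{k+1}-\bar{x}_k = -\mathcal{T}\beta_k\nabla q_{\mathcal{H}}(\bar{x}_k) + g_k + b_k$, where $g_k = -\frac{\beta_k}{|\mathcal{H}|}\sum_{i\in\mathcal{H}}\sum_{t=0}^{\mathcal{T}-1}e^i_{k,t}$ collects the local gradient estimate errors and $b_k$ collects the two CE-filter correction sums over $\mathcal{B}_k$ and $\mathcal{H}\backslash\mathcal{H}_k$. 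The leading term $-\mathcal{T}\beta_k\|\nabla q_{\mathcal{H}}(\bar{x}_k)\|^2$ is the source of the negative coefficient in \eqref{lem:Lemma_V1_PL:ineq}.

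Next I would bound the two cross terms $\langle \nabla q_{\mathcal{H}}(\bar{x}_k), g_k\rangle$ and $\langle \nabla q_{\mathcal{H}}(\bar{x}_k), b_k\rangle$ via Young's inequality, splitting each into a small multiple of $\|\nabla q_{\mathcal{H}}(\bar{x}_k)\|^2$ (which erodes the leading descent coefficient from $-\mathcal{T}\beta_k$ toward $-\tfrac{5\mathcal{T}\beta_k}{6}$) and a remainder. For $g_k$ the remainder is controlled by $\|\sum_t e^i_{k,t}\|^2$, which I would relate to $W_k$ and the sampling variance $\sigma^2$ through the inner-loop recursion \eqref{alg:yi}: unrolling $e^i_{k,t}$ across the steps $t=0,\dots,\mathcal{T}-1$ expresses each inner error as a contraction of $e^i_{k,0}$ plus accumulated noise and drift. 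For $b_k$ I would invoke the defining property of the CE filter---any Byzantine iterate admitted into $\mathcal{F}_k$ has distance to $\bar{x}_k$ no larger than that of some eliminated honest iterate---so that $\|b_k\|$ is bounded by a $|\mathcal{B}_k|/|\mathcal{H}|$-weighted sum of honest displacements $\|x^i_{k,\mathcal{T}}-\bar{x}_k\|$.

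Each honest displacement is given exactly by \eqref{eq:x_update_T}, so $\|x^i_{k,\mathcal{T}}-\bar{x}_k\|^2 \lesssim \beta_k^2\|\sum_t e^i_{k,t}\|^2 + \mathcal{T}^2\beta_k^2\|\nabla q^i(\bar{x}_k)\|^2$. The crucial step is to convert the local gradient norm into a global one: by \eqref{eq:redundancy_condition}, $x^\star_{\mathcal{H}}$ minimizes every $q^i$, hence $\nabla q^i(x^\star_{\mathcal{H}})=0$ and $\|\nabla q^i(\bar{x}_k)\| \leq L\|\bar{x}_k-x^\star_{\mathcal{H}}\|$; the PL inequality of Assumption \ref{eq:Assumption_PL_condition} then yields $\|\bar{x}_k-x^\star_{\mathcal{H}}\|^2 \leq \mu^{-2}\|\nabla q_{\mathcal{H}}(\bar{x}_k)\|^2$. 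This chain produces the $\frac{110L^3\mathcal{T}^2\beta_k^2}{\mu^2}\|\nabla q_{\mathcal{H}}(\bar{x}_k)\|^2$ term and the $\frac{2L\mathcal{T}\beta_k|\mathcal{B}_k|}{|\mathcal{H}|}$ term. The quadratic term $\frac{L}{2}\|\bar{x}_{k+1}-\bar{x}_k\|^2$ is handled by the same expansions, contributing further $\|\nabla q_{\mathcal{H}}(\bar{x}_k)\|^2$, $W_k$, and $\sigma^2$ pieces. Finally, taking conditional expectations and using Assumption \ref{eq:Assumption_noise}---unbiasedness kills the noise cross terms while the variance bound supplies the $\sigma^2$ constants---collapses everything into \eqref{lem:Lemma_V1_PL:ineq}.

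The main obstacle I anticipate is the bookkeeping of the inner-loop error $\sum_{t=0}^{\mathcal{T}-1}e^i_{k,t}$: one must carefully unroll the coupled $(x^i,y^i)$ recursion over $\mathcal{T}$ steps to bound it by a contraction of the initial error $W_k$ plus variance $\sigma^2\alpha_k^2$ and drift $\|\nabla q_{\mathcal{H}}(\bar{x}_k)\|^2$ terms, with constants compatible with the step-size constraint $L\mathcal{T}\beta_k\leq 1$. Tracking these constants through Young's inequality while preserving the $-\tfrac{5\mathcal{T}\beta_k}{6}$ leading coefficient is the delicate part; the CE-filter displacement bound, though standard, must also be applied so that the Byzantine correction remains proportional to $|\mathcal{B}_k|/|\mathcal{H}|$ rather than contaminating the descent with an $O(1)$ bias.
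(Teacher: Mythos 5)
Your proposal follows essentially the same route as the paper's proof: the descent lemma applied to \eqref{eq:x_bar_update}, the same three-way decomposition of $\nabla q_{\mathcal{H}}(\bar{x}_k)^{T}(\bar{x}_{k+1}-\bar{x}_k)$ into the drift term $-\mathcal{T}\beta_k\|\nabla q_{\mathcal{H}}(\bar{x}_k)\|^2$, the honest-error cross term, and the $\mathcal{E}_x$ cross term, each split by Young's inequality, with the Byzantine correction controlled via the CE-filter displacement bound and converted to $\|\nabla q_{\mathcal{H}}(\bar{x}_k)\|^2$ through $\nabla q^i(x^\star_{\mathcal{H}})=0$, Lipschitz continuity, and the P\L\ inequality (this is exactly the paper's Lemma \ref{lem:norm_E_x}), and the quadratic term handled by the same expansion (Lemma \ref{lem:xbar_k_k+1}). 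The ingredients, their roles, and the anticipated bookkeeping all match the paper's argument, so the proposal is correct in approach.
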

For our result, we consider the following Lyapunov function 
\begin{align}\label{eq:Lyapunov_PL_def}
    V_{k} = (q_{\mathcal{H}}(\bar{x}_{k})-q_{\mathcal{H}}(x^{\star}_{\mathcal{H}})) + W_{k}.
\end{align}
% \begin{thm}\label{Theorem:PL_Tg1}
%     Let $\{x^{i}_{k}\}$ and $\{y^{i}_{k}\}$ be generated by Algorithm \ref{alg:cap} for $\mathcal{T}>1$. We assume that the CE filter in Algorithm \eqref{alg:cap} satisfies the condition in \eqref{eq:filter_condition}. Further let the average of the cost functions of the honest agents satisfy non-convexity with PL condition from Assumption \ref{eq:Assumption_PL_condition}. Let $\alpha_{k}$ and $\beta_{k}$ are chosen according to \eqref{eq:step_sizes} where they satisfy the following conditions
%     \begin{align}\label{eq:alpha_beta_choice_PL}
%          &\alpha_{k} \leq \frac{1}{(24)^{2}(L+1)^{3}\mathcal{T}}, \quad \beta_{k} \leq \frac{\mu^{2}}{(60)^{2}L^{3}\mathcal{T}},\notag\\
%     &\quad \frac{\beta_{k}}{\alpha_{k}} \leq \frac{1}{(72)^{2}(L+1)^{4}\mathcal{T}^{2}}.
%     \end{align}
%     Then for all $k \geq 0$
% \begin{align}\label{eq:convergence_2}
%      &\mathbb{E}[\Psi_{k+1}]\notag\\
%      &\leq \Big(1-\frac{\mu\mathcal{T}\beta_{k}}{6}\Big)\mathbb{E}[\Psi_{k}]+40\mathcal{T}\sigma^{2}\alpha^{2}_{k} + \frac{112\mathcal{T}^{2}\sigma^{2}f\alpha^{2}_{k}}{|\mathcal{H}|}.
% \end{align}
% \end{thm}
\begin{thm}\label{Theorem:PL_Tg1}
%Let $\{x^{i}_{k}\}$ and $\{y^{i}_{k}\}$ be generated by Algorithm \ref{alg:cap} for  $\mathcal{T} > 1$. Assume that the CE filter in Algorithm \ref{alg:cap} satisfies the condition in \eqref{eq:filter_condition}. Furthermore, let the average of the cost functions of the honest agents satisfy the PL  condition for non-convex functions, as stated in Assumption \ref{eq:Assumption_PL_condition}. 

Let Assumption \ref{eq:Assumption_PL_condition} hold. Let $\alpha_{k}$ and $\beta_{k}$ be given in  \eqref{eq:step_sizes} and satisfy
    \begin{align}\label{eq:alpha_beta_choice_PL}
        \alpha_{k} &\leq \frac{\mu^{2}}{(6)^{4}(L+1)^{3}\mathcal{T}}, \quad \beta_{k} \leq \frac{\mu^{2}}{(12)^{4}L^{3}\mathcal{T}},\notag\\
    &\quad \frac{\beta_{k}}{\alpha_{k}} \leq \frac{\mu^{2}}{(12)^{4}(L+1)^{4}\mathcal{T}^{2}}\cdot
    \end{align}
    Then for all \(k \geq 0\) we have
    \begin{align}\label{eq:Lyapunov_update_PL_thm}
        \mathbb{E}[V_{k+1}]
        &\leq \Big(1 -\frac{9\mu\mathcal{T}\beta_{k}}{6} + \frac{4L\mathcal{T}\beta_{k}|\mathcal{B}_{k}|}{|\mathcal{H}|}\Big)\mathbb{E}[V_{k}]\notag\\
        & \quad+150(L+1)^{2}\mathcal{T}^{2}\sigma^{2}\alpha^{2}_{k} + \frac{112(L+1)^{2}\mathcal{T}^{2}\sigma^{2}f\alpha_{k}^{2}}{|\mathcal{H}|}\cdot
    \end{align}
    Further, let $C_{\alpha}, C_{\beta}$ and $h$ satisfy
    \begin{align}\label{eq:step_size_parameter_PL}
         & C_{\alpha} \geq \frac{(12)^{5}(L+1)^{4}\mathcal{T}}{\mu^{3}}, \quad C_{\beta} = \frac{12}{\mu\mathcal{T}}\notag\\
         & h \geq \max\Big\{\frac{(6)^{4}(L+1)^{3}\mathcal{T}C_{\alpha}}{\mu^{2}};\ \frac{(12)^{5}L^{3}}{\mu^{3}}\Big\},
    \end{align}
    and the condition \eqref{eq:filter_condition} hold. Then we obtain
\begin{align}\label{eq:convergence_rate_PL}
    &\mathbb{E}[V_{k+1}] \notag\\
    &\leq \frac{h^{2}\mathbb{E}[V_{0}]}{(1+h+k)^{2}} + \frac{150\mathcal{T}\sigma^{2}C_{\alpha}^{2}}{(1+h+k)} + \frac{112\mathcal{T}^{2}\sigma^{2}fC_{\alpha}^{2}}{(1+h+k)|\mathcal{H}|}\cdot
\end{align}    
\end{thm}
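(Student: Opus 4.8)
The plan is to mirror the proof of Theorem~\ref{Theorem:SC_Tg1}, replacing the role of $\|\bar{x}_k - x_{\mathcal{H}}^\star\|^2$ by the optimality gap $\Phi_k = q_{\mathcal{H}}(\bar{x}_k) - q_{\mathcal{H}}(x_{\mathcal{H}}^\star)$. Since $\mathbb{E}[\Phi_{k+1}] = \mathbb{E}[\Phi_k] + \mathbb{E}[q_{\mathcal{H}}(\bar{x}_{k+1}) - q_{\mathcal{H}}(\bar{x}_k)]$, I would add the descent bound of Lemma~\ref{lem:Lemma_V1_PL} to the error recursion of Lemma~\ref{lem:Lemma_V2_SC}, expressing $\mathbb{E}[V_{k+1}]$ as a combination of $\mathbb{E}[\Phi_k]$, $\mathbb{E}[W_k]$, $\mathbb{E}[\|\nabla q_{\mathcal{H}}(\bar{x}_k)\|^2]$, $\mathbb{E}[\|\bar{x}_k - x_{\mathcal{H}}^\star\|^2]$, and the accumulated $\sigma^2$ terms. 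The one genuinely new feature relative to the strongly convex case is that the two lemmas live in incompatible coordinates: Lemma~\ref{lem:Lemma_V1_PL} is written in terms of $\|\nabla q_{\mathcal{H}}(\bar{x}_k)\|^2$ while Lemma~\ref{lem:Lemma_V2_SC} is written in terms of $\|\bar{x}_k - x_{\mathcal{H}}^\star\|^2$, and both must be re-expressed through $\Phi_k$ before they can be folded into $V_k = \Phi_k + W_k$.

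All these conversions come from Assumption~\ref{eq:Assumption_PL_condition} together with the $L$-smoothness of $q_{\mathcal{H}}$ (an average of the $L$-smooth $q^i$ by Assumption~\ref{eq:Assumption_Lipschitz}). For the leading negative piece $-\tfrac{5\mathcal{T}\beta_k}{6}\|\nabla q_{\mathcal{H}}(\bar{x}_k)\|^2$ of the descent bound I apply the lower PL inequality $\|\nabla q_{\mathcal{H}}(\bar{x}_k)\|^2 \geq 2\mu\Phi_k$, which (the coefficient being negative) converts it into the contraction $-\tfrac{5\mu\mathcal{T}\beta_k}{3}\Phi_k$. For the positive corrections multiplying $\|\nabla q_{\mathcal{H}}(\bar{x}_k)\|^2$, namely the Byzantine term and the $\beta_k^2$ term, I instead invoke the smoothness bound $\|\nabla q_{\mathcal{H}}(\bar{x}_k)\|^2 \leq 2L\Phi_k$ (valid since $x_{\mathcal{H}}^\star$ minimizes $q_{\mathcal{H}}$), so they become positive multiples of $\Phi_k$; the Byzantine part is retained in the leading coefficient to be controlled by the filter condition at the end. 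Finally, the $\|\bar{x}_k - x_{\mathcal{H}}^\star\|^2$ terms carried in from Lemma~\ref{lem:Lemma_V2_SC} are bounded using the right-hand PL inequality $\|\bar{x}_k - x_{\mathcal{H}}^\star\|^2 \leq \tfrac{2}{\mu^2}\Phi_k$. After these substitutions everything collects into a coefficient times $\mathbb{E}[\Phi_k]$, a coefficient times $\mathbb{E}[W_k]$, and the $\sigma^2$ terms.

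To obtain the one-step bound~\eqref{eq:Lyapunov_update_PL_thm} I would then show that, apart from the target factor $1 - \tfrac{9\mu\mathcal{T}\beta_k}{6} + \tfrac{4L\mathcal{T}\beta_k|\mathcal{B}_k|}{|\mathcal{H}|}$ common to both $\Phi_k$ and $W_k$, the remaining coefficients are non-positive and hence droppable. The residual $\Phi_k$-coefficient pits the surplus contraction $-\tfrac{\mu\mathcal{T}\beta_k}{6}$ (the gap between $\tfrac{10}{6}$ from PL and the retained $\tfrac{9}{6}$) against the $\alpha_k\beta_k$, $\beta_k^2$, and $\beta_k^2/\alpha_k$ corrections, while the residual $W_k$-coefficient pits $-\tfrac{\alpha_k}{2}$ against the $O((L+1)^2\mathcal{T}^2\beta_k)$ and $O(\mathcal{T}\beta_k)$ terms contributed by both lemmas; the restrictions in~\eqref{eq:alpha_beta_choice_PL}, in particular the bound on $\beta_k/\alpha_k$, are exactly what force both residuals to be non-positive, as in the strongly convex case, and bounding the $\sigma^2$ terms using $\beta_k \leq \alpha_k \leq 1$ and $\mu \leq L$ yields the two displayed noise terms.

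For the explicit rate~\eqref{eq:convergence_rate_PL} I would invoke the filter condition~\eqref{eq:filter_condition} to bound $\tfrac{4L\mathcal{T}\beta_k|\mathcal{B}_k|}{|\mathcal{H}|} \leq \tfrac{4\mu\mathcal{T}\beta_k}{3}$, so the contraction factor is at most $1 - \tfrac{\mu\mathcal{T}\beta_k}{6}$. Substituting $\beta_k = \tfrac{12}{\mu\mathcal{T}(1+h+k)}$ from $C_\beta = \tfrac{12}{\mu\mathcal{T}}$ turns this into $1 - \tfrac{2}{1+h+k}$ and the noise into an $O(1/(1+h+k)^2)$ term; multiplying through by $(1+h+k)^2$, using $(1+h+k)^2\big(1-\tfrac{2}{1+h+k}\big) = (h+k)^2 - 1 \leq (h+k)^2$, telescoping, and dividing back by $(1+h+k)^2$ reproduces~\eqref{eq:convergence_rate_PL} exactly as in Theorem~\ref{Theorem:SC_Tg1}. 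I expect the main obstacle to be the bookkeeping of the preceding step: verifying under~\eqref{eq:alpha_beta_choice_PL} that both residual coefficients are truly non-positive, since this hinges on the sign-dependent use of the PL and smoothness inequalities being applied to the correct pieces of the gradient-norm term, and on the single surplus contraction $\tfrac{\mu\mathcal{T}\beta_k}{6}$ simultaneously dominating all of the $\alpha_k\beta_k$, $\beta_k^2$, and $\beta_k^2/\alpha_k$ remainders.
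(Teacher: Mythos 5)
Your outline has the same skeleton as the paper's proof (add Lemma~\ref{lem:Lemma_V2_SC} to Lemma~\ref{lem:Lemma_V1_PL}, reconcile the gradient-norm and distance terms through Assumption~\ref{eq:Assumption_PL_condition}, show the residual coefficients are non-positive under \eqref{eq:alpha_beta_choice_PL}, then apply \eqref{eq:filter_condition} and telescope), and your final telescoping step is identical to the paper's. But the \emph{direction} in which you reconcile coordinates is opposite to the paper's, and that is where the plan breaks. Writing $\Phi_k = q_{\mathcal{H}}(\bar{x}_k)-q_{\mathcal{H}}(x^{\star}_{\mathcal{H}})$: you convert every positive correction $c\,\|\nabla q_{\mathcal{H}}(\bar{x}_k)\|^{2}$ down to $\Phi_k$-coordinates via smoothness, which inflates it to $2Lc\,\Phi_k$, while your only negative resource there is the surplus $-\tfrac{\mu\mathcal{T}\beta_k}{6}\Phi_k$. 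Non-positivity of the residual thus requires $c \leq \tfrac{\mu}{L}\cdot\tfrac{\mathcal{T}\beta_k}{12}$, but the conditions \eqref{eq:alpha_beta_choice_PL} only guarantee $c \leq \big(\tfrac{410}{12^{4}}+\tfrac{14}{6^{4}}\big)\mathcal{T}\beta_k \approx 0.031\,\mathcal{T}\beta_k$, a bound with no $\mu/L$ factor; so your verification fails as soon as $L/\mu$ exceeds a small absolute constant ($\approx 2.7$). The Byzantine term is worse: smoothness turns $\tfrac{2L\mathcal{T}\beta_k|\mathcal{B}_k|}{|\mathcal{H}|}\|\nabla q_{\mathcal{H}}(\bar{x}_k)\|^{2}$ into $\tfrac{4L^{2}\mathcal{T}\beta_k|\mathcal{B}_k|}{|\mathcal{H}|}\Phi_k$, not the claimed $\tfrac{4L\mathcal{T}\beta_k|\mathcal{B}_k|}{|\mathcal{H}|}$ of \eqref{eq:Lyapunov_update_PL_thm}, and under \eqref{eq:filter_condition} this is bounded only by $\tfrac{4L\mu\mathcal{T}\beta_k}{3}$, which overwhelms the contraction $-\tfrac{9\mu\mathcal{T}\beta_k}{6}$ whenever $L>9/8$; on your route there is then no contraction at all, so neither \eqref{eq:Lyapunov_update_PL_thm} nor \eqref{eq:convergence_rate_PL} can be reached.

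The paper avoids this loss by doing the conversion in the other direction: it inserts the target factor $\big(1-\tfrac{9\mu\mathcal{T}\beta_k}{6}+\tfrac{4L\mathcal{T}\beta_k|\mathcal{B}_k|}{|\mathcal{H}|}\big)\mathbb{E}[V_k]$ by adding and subtracting, and then moves the re-added \emph{positive} term $\tfrac{9\mu\mathcal{T}\beta_k}{6}\Phi_k$ \emph{up} into gradient coordinates using the PL upper bound $\Phi_k \leq \tfrac{1}{2\mu}\|\nabla q_{\mathcal{H}}(\bar{x}_k)\|^{2}$; the $\mu$ cancels, leaving the surplus $-\tfrac{\mathcal{T}\beta_k}{12}\|\nabla q_{\mathcal{H}}(\bar{x}_k)\|^{2}$ in the \emph{same} coordinates as the corrections (which are never multiplied by $2L$), and \eqref{eq:alpha_beta_choice_PL} then closes the argument with room to spare. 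In short, the asymmetry between the two usable inequalities ($\Phi_k \leq \tfrac{1}{2\mu}\|\nabla q\|^{2}$ versus $\|\nabla q\|^{2} \leq 2L\Phi_k$) means the bookkeeping must be done in gradient-norm coordinates, not in $\Phi_k$-coordinates; your choice costs a factor $L/\mu$ that the theorem's hypotheses cannot absorb. Two further small points: the distance terms inherited from Lemma~\ref{lem:Lemma_V2_SC} satisfy $\|\bar{x}_k-x^{\star}_{\mathcal{H}}\|^{2} \leq \tfrac{2}{\mu}\Phi_k$, not $\tfrac{2}{\mu^{2}}\Phi_k$ as you wrote (the paper instead uses $\|\bar{x}_k-x^{\star}_{\mathcal{H}}\|^{2} \leq \tfrac{1}{\mu^{2}}\|\nabla q_{\mathcal{H}}(\bar{x}_k)\|^{2}$); and, in fairness, the paper's own absorption of the Byzantine cross term is itself loose, but its gradient-coordinate bookkeeping is what makes the stated constants attainable at all.
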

\begin{remark}
Our result in \eqref{eq:convergence_rate_PL} implies that $q_{\mathcal{H}}(\bar{x}_k)$ converges to the optimal function value $q_{\mathcal{H}}(x^{\star}_{\mathcal{H}})$  at a rate $\mathcal{O}(1/k)$, which is the same rate as in the Byzantine-free setting. In addition, our convergence complexity bound depends on the ratio $f/\mathcal{H}$, which is similar to the result in \cite{dutta2023resilient}. 
    % The theorem states that when $q_{\mathcal{H}}$ satisfies the Polyak-Łojasiewicz (P\L) condition, the update for $V_k$ in \eqref{eq:Lyapunov_update_PL_thm} depends on three key factors, which are the ratio of faulty agents to honest agents $\frac{f}{|\mathcal{H}|}$, the step size $\alpha_k$, and the noise level in the stochastic gradient. For a sufficiently small ratio $\frac{f}{|\mathcal{H}|}$, as specified in \eqref{eq:filter_condition}, and with suitable step size parameters choices stated in \eqref{eq:step_size_parameter_PL}, $q_{\mathcal{}}(\bar{x}_k)$ converges to the optimal function value $q_{\mathcal{H}}(x^{\star}_{\mathcal{H}})$. The overall convergence rate is primarily influenced by the noise terms, which decrease at a rate of $\mathcal{O}\left(\frac{1}{k}\right)$.
\end{remark}
\begin{proof}
    Using Lemmas \ref{lem:Lemma_V2_SC} and \ref{lem:Lemma_V1_PL} we obtain
    \begin{align} %\label{eq:Lyapunov_update_PL_0}
        &\mathbb{E}[ q_{\mathcal{H}}(\Bar{x}_{k+1})- q_{\mathcal{H}}(\Bar{x}_{k})] + \mathbb{E}[W_{k+1}]\notag\\
         &\leq \Big(-\frac{5\mathcal{T}\beta_{k}}{6} + \frac{2L\mathcal{T}\beta_{k}|\mathcal{B}_{k}|}{\mu|\mathcal{H}|} + \frac{110L^{3}\mathcal{T}^{2}\beta^{2}_{k}}{\mu^{2}}\Big) \mathbb{E}[\|\nabla q_{\mathcal{H}}(\bar{x}_{k})\|^{2}]\notag\\
         &\quad+ \Big(\frac{14L^{3}\mathcal{T}^{2}\alpha_{k}\beta_{k}}{\mu^{2}}+ \frac{300L^{4}\mathcal{T}^{2}\beta^{2}_{k}}{\mu^{2}\alpha_{k}}\Big)\mathbb{E}[\|\nabla q_{\mathcal{H}}(\bar{x}_{k})\|^{2}]\notag\\
         &\quad+\Big(1-\frac{\alpha_{k}}{2} + \frac{78\beta_{k}\mathcal{T}}{\mu} + 126(L+1)^{2}\mathcal{T}^{2}\beta_{k}\Big)\mathbb{E}[W_{k}]\notag\\
         &\quad+30\mathcal{T}\sigma^{2}\alpha^{2}_{k} + 16L\mathcal{T}^{2}\sigma^{2}\alpha_{k}\beta^{2}_{k} + 2\mathcal{T}\sigma^{2}\alpha_{k}^{2}\notag\\
         &\quad+ 96L^{2}\mathcal{T}^{2}\sigma^{2}\beta^{2}_{k} + 6L^{2}\mathcal{T}^{2}\sigma^{2}\alpha^{2}_{k}\beta^{2}_{k}\notag\\
         &\quad+ \frac{16\mathcal{T}^{2}\sigma^{2}f\alpha^{2}_{k}}{|\mathcal{H}|} + \frac{96L^{2}\mathcal{T}^{2}\sigma^{2}f\alpha^{2}_{k}}{|\mathcal{H}|},\notag
         % &\leq \Big(-\Big(\frac{5}{6} - \frac{2L|\mathcal{B}_{k}|}{\mu|\mathcal{H}|}\Big)\mathcal{T}\beta_{k} \Big) \mathbb{E}[\|\nabla q_{\mathcal{H}}(\bar{x}_{k})\|^{2}]\notag\\
         % &\quad +\frac{82L^{3}\mathcal{T}^{2}\beta^{2}_{k}}{\mu^{2}}\mathbb{E}[\|\nabla q_{\mathcal{H}}(\bar{x}_{k})\|^{2}]\notag\\
         % &\quad+ \Big(14L^{3}\mathcal{T}^{2}\alpha_{k}\beta_{k}+ \frac{132L^{4}\mathcal{T}^{2}\beta^{2}_{k}}{\alpha_{k}}\Big)\mathbb{E}[\|\nabla q_{\mathcal{H}}(\bar{x}_{k})\|^{2}]\notag\\
         % &\quad+\Big(1-\frac{\alpha_{k}}{2} + 106(L+1)^{2}\mathcal{T}^{2}\beta_{k}\Big)\mathbb{E}[W_{k}]\notag\\
         % &\quad +66(L+1)^{2}\mathcal{T}^{2}\sigma^{2}\alpha^{2}_{k} + \frac{112(L+1)^{2}\mathcal{T}^{2}\sigma^{2}f\alpha_{k}^{2}}{|\mathcal{H}|},\no
    \end{align}
    which since $\beta_{k}\leq \alpha_{k}\leq 1$ 
    % and using $1 \leq (L+1)$, $L+1 \leq (L+1)^{2}$ along with $\mathcal{T} \leq \mathcal{T}^{2}$,
    % Next using \eqref{eq:filter_condition} gives us 
    % \begin{align}
    %     \frac{5}{6} - \frac{2L|\mathcal{B}_{k}|}{\mu|\mathcal{H}|} \geq \frac{1}{6}.\notag
    % \end{align}
   gives
    \begin{align}
        &\mathbb{E}[ q_{\mathcal{H}}(\Bar{x}_{k+1})- q_{\mathcal{H}}(x^{\star}_{k})] + \mathbb{E}[W_{k+1}]\notag\\
        &\quad-\mathbb{E}[ q_{\mathcal{H}}(\Bar{x}_{k})- q_{\mathcal{H}}(x^{\star}_{k})]\notag\\
         % &\leq \Big(-\frac{9\mathcal{T}\beta_{k}}{12} + \frac{2L\mathcal{T}\beta_{k}|\mathcal{B}_{k}|}{\mu|\mathcal{H}|} \Big) \mathbb{E}[\|\nabla q_{\mathcal{H}}(\bar{x}_{k})\|^{2}]\notag\\
         % &\quad+\Big(-\frac{\beta_{k}\mathcal{T}}{12}+\frac{110L^{3}\mathcal{T}^{2}\beta^{2}_{k}}{\mu^{2}}\Big) \mathbb{E}[\|\nabla q_{\mathcal{H}}(\bar{x}_{k})\|^{2}]\notag\\
         % &\quad+ \Big(\frac{14L^{3}\mathcal{T}^{2}\alpha_{k}\beta_{k}}{\mu^{2}}+ \frac{300L^{4}\mathcal{T}^{2}\beta^{2}_{k}}{\mu^{2}\alpha_{k}}\Big)\mathbb{E}[\|\nabla q_{\mathcal{H}}(\bar{x}_{k})\|^{2}]\notag\\
         % &\quad+\Big(1-\frac{\alpha_{k}}{2} + \frac{204(L+1)^{4}\mathcal{T}^{2}\beta_{k}}{\mu^{2}}\Big)\mathbb{E}[W_{k}]\notag\\
         &\leq \Big(-\frac{5\mathcal{T}\beta_{k}}{6} + \frac{2L\mathcal{T}\beta_{k}|\mathcal{B}_{k}|}{\mu|\mathcal{H}|} + \frac{110L^{3}\mathcal{T}^{2}\beta^{2}_{k}}{\mu^{2}}\Big) \mathbb{E}[\|\nabla q_{\mathcal{H}}(\bar{x}_{k})\|^{2}]\notag\\
         &\quad+ \Big(\frac{14L^{3}\mathcal{T}^{2}\alpha_{k}\beta_{k}}{\mu^{2}}+ \frac{300L^{4}\mathcal{T}^{2}\beta^{2}_{k}}{\mu^{2}\alpha_{k}}\Big)\mathbb{E}[\|\nabla q_{\mathcal{H}}(\bar{x}_{k})\|^{2}]\notag\\
         &\quad+\Big(1-\frac{\alpha_{k}}{2} + \frac{78\beta_{k}\mathcal{T}}{\mu} + 126(L+1)^{2}\mathcal{T}^{2}\beta_{k}\Big)\mathbb{E}[W_{k}]\notag\\
         &\quad+150(L+1)^{2}\mathcal{T}^{2}\sigma^{2}\alpha^{2}_{k} + \frac{112(L+1)^{2}\mathcal{T}^{2}\sigma^{2}f\alpha_{k}^{2}}{|\mathcal{H}|}\notag\cdot
         % &= -\frac{\mathcal{T}\beta_{k}}{12} \mathbb{E}[\|\nabla q_{\mathcal{H}}(\bar{x}_{k})\|^{2}]\notag\\
         % &\quad +\Big(-\frac{\mathcal{T}\beta_{k}}{12}+ \frac{82L^{3}\mathcal{T}^{2}\beta^{2}_{k}}{\mu^{2}}\Big)\mathbb{E}[\|\nabla q_{\mathcal{H}}(\bar{x}_{k})\|^{2}]\notag\\
         % &\quad+ \Big(14L^{3}\mathcal{T}^{2}\alpha_{k}\beta_{k}+ \frac{132L^{4}\mathcal{T}^{2}\beta^{2}_{k}}{\alpha_{k}}\Big)\mathbb{E}[\|\nabla q_{\mathcal{H}}(\bar{x}_{k})\|^{2}]\notag\\
         % &\quad+\Big(1-\frac{\alpha_{k}}{2} + 106(L+1)^{2}\mathcal{T}^{2}\beta_{k}\Big)\mathbb{E}[W_{k}]\notag\\
         % &\quad+66(L+1)^{2}\mathcal{T}^{2}\sigma^{2}\alpha^{2}_{k}\notag\\
         % &\quad+ \frac{112(L+1)^{2}\mathcal{T}^{2}\sigma^{2}f\alpha_{k}^{2}}{|\mathcal{H}|}\notag\\
         % &\leq -\frac{\mu\mathcal{T}\beta_{k}}{6} \mathbb{E}[ q_{\mathcal{H}}(\bar{x}_{k})-q_{\mathcal{H}}(x^{\star}_{\mathcal{H}})]\notag\\
         % &\quad +\Big(-\frac{\mathcal{T}\beta_{k}}{12}+ \frac{82L^{3}\mathcal{T}^{2}\beta^{2}_{k}}{\mu^{2}}\Big)\mathbb{E}[\|\nabla q_{\mathcal{H}}(\bar{x}_{k})\|^{2}]\notag\\
         % &\quad+ \Big(14L^{3}\mathcal{T}^{2}\alpha_{k}\beta_{k}+ \frac{132L^{4}\mathcal{T}^{2}\beta^{2}_{k}}{\alpha_{k}}\Big)\mathbb{E}[\|\nabla q_{\mathcal{H}}(\bar{x}_{k})\|^{2}]\notag\\
         % &\quad+\Big(1-\frac{\alpha_{k}}{2} + 106(L+1)^{2}\mathcal{T}^{2}\beta_{k}\Big)\mathbb{E}[W_{k}]\notag\\
         % &\quad+66(L+1)^{2}\mathcal{T}^{2}\sigma^{2}\alpha^{2}_{k} + \frac{112(L+1)^{2}\mathcal{T}^{2}\sigma^{2}f\alpha_{k}^{2}}{|\mathcal{H}|}\notag,
    \end{align}
    By the definition of $V_{k}$ in \eqref{eq:Lyapunov_PL_def} we have from the relation above
    \begin{align*}
        &\mathbb{E}[V_{k+1}]\notag\\
        &\leq \Big(1-\frac{9\mu\mathcal{T}\beta_{k}}{6}+\frac{4L\mathcal{T}\beta_{k}|\mathcal{B}_{k}|}{|\mathcal{H}|}\Big)\mathbb{E}[V_{k}]\notag\\
        &\quad+\Big(\frac{9\mu\mathcal{T}\beta_{k}}{6}-\frac{4L\mathcal{T}\beta_{k}|\mathcal{B}_{k}|}{|\mathcal{H}|}\Big)\mathbb{E}[q_{\mathcal{H}}(\bar{x}_{k})-q_{\mathcal{H}}(x^{\star}_{\mathcal{H}})]\notag\\
        &\quad+\Big(-\frac{5\mathcal{T}\beta_{k}}{6} + \frac{2L\mathcal{T}\beta_{k}|\mathcal{B}_{k}|}{\mu|\mathcal{H}|} + \frac{110L^{3}\mathcal{T}^{2}\beta^{2}_{k}}{\mu^{2}}\Big) \mathbb{E}[\|\nabla q_{\mathcal{H}}(\bar{x}_{k})\|^{2}]\notag\\
         &\quad+ \Big(\frac{14L^{3}\mathcal{T}^{2}\alpha_{k}\beta_{k}}{\mu^{2}}+ \frac{300L^{4}\mathcal{T}^{2}\beta^{2}_{k}}{\mu^{2}\alpha_{k}}\Big)\mathbb{E}[\|\nabla q_{\mathcal{H}}(\bar{x}_{k})\|^{2}]\notag\\
         &\quad+\Big(-\frac{\alpha_{k}}{2} + \frac{78\beta_{k}\mathcal{T}}{\mu} + 126(L+1)^{2}\mathcal{T}^{2}\beta_{k}\Big)\mathbb{E}[W_{k}]\notag\\
         &\quad+\Big(\frac{9\mu\mathcal{T}\beta_{k}}{6}-\frac{4L\mathcal{T}\beta_{k}|\mathcal{B}_{k}|}{|\mathcal{H}|}\Big)\mathbb{E}[W_{k}]\notag\\&\quad+150(L+1)^{2}\mathcal{T}^{2}\sigma^{2}\alpha^{2}_{k} + \frac{112(L+1)^{2}\mathcal{T}^{2}\sigma^{2}f\alpha_{k}^{2}}{|\mathcal{H}|},
    \end{align*}
    which by Assumption \ref{eq:Assumption_PL_condition} gives
    % \begin{align}
    %  &\mathbb{E}[ q_{\mathcal{H}}(\Bar{x}_{k+1})- q_{\mathcal{H}}(x^{\star}_{\mathcal{H}})] + \mathbb{E}[W_{k+1}]\notag\\
    %  & - \mathbb{E}[ q_{\mathcal{H}}(\Bar{x}_{k})- q_{\mathcal{H}}(x^{\star}_{\mathcal{H}})] \notag\\
    %      &\leq \Big(-\frac{9\mu\mathcal{T}\beta_{k}}{6} +\frac{4L\mathcal{T}\beta_{k}|\mathcal{B}_{k}|}{|\mathcal{H}|}\Big) \mathbb{E}[ q_{\mathcal{H}}(\bar{x}_{k})-q_{\mathcal{H}}(x^{\star}_{\mathcal{H}})]\notag\\
    %      &\quad +\Big(-\frac{\mathcal{T}\beta_{k}}{12}+ \frac{110L^{3}\mathcal{T}^{2}\beta^{2}_{k}}{\mu^{2}}\Big)\mathbb{E}[\|\nabla q_{\mathcal{H}}(\bar{x}_{k})\|^{2}]\notag\\
    %      &\quad+ \Big(\frac{14L^{3}\mathcal{T}^{2}\alpha_{k}\beta_{k}}{\mu^{2}}+ \frac{300L^{4}\mathcal{T}^{2}\beta^{2}_{k}}{\mu^{2}\alpha_{k}}\Big)\mathbb{E}[\|\nabla q_{\mathcal{H}}(\bar{x}_{k})\|^{2}]\notag\\
    %      &\quad+\Big(1-\frac{\alpha_{k}}{2} + \frac{204(L+1)^{4}\mathcal{T}^{2}\beta_{k}}{\mu^{2}}\Big)\mathbb{E}[W_{k}]\notag\\
    %      &\quad+150(L+1)^{2}\mathcal{T}^{2}\sigma^{2}\alpha^{2}_{k} \notag\\
    %      &\quad+ \frac{112(L+1)^{2}\mathcal{T}^{2}\sigma^{2}f\alpha_{k}^{2}}{|\mathcal{H}|}\notag.
    % \end{align}
    % Next after rearranging terms and using the definition of $V_{k}$ from \eqref{eq:Lyapunov_PL_def}, the above inequality can be expressed as
    \begin{align*}
        &\mathbb{E}[V_{k+1}]\notag\\
        &\leq \Big(1-\frac{9\mu\mathcal{T}\beta_{k}}{6} + \frac{4L\mathcal{T}\beta_{k}|\mathcal{B}_{k}|}{|\mathcal{H}|}\Big)\mathbb{E}[V_{k}]\notag\\
         &\quad +\Big(-\frac{\mathcal{T}\beta_{k}}{12}+ \frac{110L^{3}\mathcal{T}^{2}\beta^{2}_{k}}{\mu^{2}}\Big)\mathbb{E}[\|\nabla q_{\mathcal{H}}(\bar{x}_{k})\|^{2}]\notag\\
         &\quad+ \Big(\frac{14L^{3}\mathcal{T}^{2}\alpha_{k}\beta_{k}}{\mu^{2}}+ \frac{300L^{4}\mathcal{T}^{2}\beta^{2}_{k}}{\mu^{2}\alpha_{k}}\Big)\mathbb{E}[\|\nabla q_{\mathcal{H}}(\bar{x}_{k})\|^{2}]\notag\\
         &\quad+\Big(-\frac{\alpha_{k}}{2} + \frac{9\mu\mathcal{T}\beta_{k}}{6} + \frac{204(L+1)^{4}\mathcal{T}^{2}\beta_{k}}{\mu^{2}}\Big)\mathbb{E}[W_{k}]\notag\\
         &\quad+150(L+1)^{2}\mathcal{T}^{2}\sigma^{2}\alpha^{2}_{k} + \frac{112(L+1)^{2}\mathcal{T}^{2}\sigma^{2}f\alpha_{k}^{2}}{|\mathcal{H}|}\notag\\
         &\leq \Big(1-\frac{9\mu\mathcal{T}\beta_{k}}{6} + \frac{4L\mathcal{T}\beta_{k}|\mathcal{B}_{k}|}{|\mathcal{H}|}\Big)\mathbb{E}[V_{k}]\notag\\
    &\quad +\Big(-\frac{\mathcal{T}\beta_{k}}{12}+ \frac{110L^{3}\mathcal{T}^{2}\beta^{2}_{k}}{\mu^{2}}\Big)\mathbb{E}[\|\nabla q_{\mathcal{H}}(\bar{x}_{k})\|^{2}]\notag\\
         &\quad+ \Big(\frac{14L^{3}\mathcal{T}^{2}\alpha_{k}\beta_{k}}{\mu^{2}}+ \frac{300L^{4}\mathcal{T}^{2}\beta^{2}_{k}}{\mu^{2}\alpha_{k}}\Big)\mathbb{E}[\|\nabla q_{\mathcal{H}}(\bar{x}_{k})\|^{2}]\notag\\
         &\quad+\Big(-\frac{\alpha_{k}}{2} +\frac{206(L+1)^{4}\mathcal{T}^{2}\beta_{k}}{\mu^{2}}\Big)\mathbb{E}[W_{k}]\notag\\
         &\quad+150(L+1)^{2}\mathcal{T}^{2}\sigma^{2}\alpha^{2}_{k} + \frac{112(L+1)^{2}\mathcal{T}^{2}\sigma^{2}f\alpha_{k}^{2}}{|\mathcal{H}|}\notag\\
         &\leq \Big(1-\frac{9\mu\mathcal{T}\beta_{k}}{6} + \frac{4L\mathcal{T}\beta_{k}|\mathcal{B}_{k}|}{|\mathcal{H}|}\Big)\mathbb{E}[V_{k}]\notag\\
         &\quad+150(L+1)^{2}\mathcal{T}^{2}\sigma^{2}\alpha^{2}_{k} + \frac{112(L+1)^{2}\mathcal{T}^{2}\sigma^{2}f\alpha_{k}^{2}}{|\mathcal{H}|},
    \end{align*}
    where the last inequality we use \eqref{eq:alpha_beta_choice_PL} to have 
\begin{align*}
    &0\leq  \frac{\mathcal{T}\beta_{k}}{12}- \frac{110L^{3}\mathcal{T}^{2}\beta^{2}_{k}}{\mu^{2}} - \frac{14L^{3}\mathcal{T}^{2}\alpha_{k}\beta_{k}}{\mu^{2}} - \frac{300L^{4}\mathcal{T}^{2}\beta^{2}_{k}}{\mu^{2}\alpha_{k}},\notag\\
    &0 \leq \frac{\alpha_{k}}{2} - \frac{206(L+1)^{4}\mathcal{T}^{2}\beta_{k}}{\mu^{2}}\cdot
\end{align*}
 % Thus from \eqref{eq:Lyapunov_update_PL_1} we arrive at \eqref{eq:Lyapunov_update_PL_thm} i.e
 %   \begin{align}
 %       \mathbb{E}[V_{k+1}]&\leq \Big(1-\frac{9\mu\mathcal{T}\beta_{k}}{6} + \frac{4L\mathcal{T}\beta_{k}|\mathcal{B}_{k}|}{|\mathcal{H}|}\Big)\mathbb{E}[V_{k}]\notag\\
 %       &\quad+150(L+1)^{2}\mathcal{T}^{2}\sigma^{2}\alpha^{2}_{k} + \frac{112(L+1)^{2}\mathcal{T}^{2}\sigma^{2}f\alpha_{k}^{2}}{|\mathcal{H}|}.\notag
 %   \end{align}
   % From the above result we have
   % \begin{align}\label{eq:Lyapunov_update_PL_2}
   %     \mathbb{E}[V_{k+1}]&\leq \Big(1-\beta_{k}\mathcal{T}\Big(\frac{9\mu}{6} - \frac{4L|\mathcal{B}_{k}|}{|\mathcal{H}|}\Big)\Big)\mathbb{E}[V_{k}]\notag\\
   %     &\quad+66(L+1)^{2}\mathcal{T}^{2}\sigma^{2}\alpha^{2}_{k} + \frac{112(L+1)^{2}\mathcal{T}^{2}\sigma^{2}f\alpha_{k}^{2}}{|\mathcal{H}|}.
   % \end{align}
To show  \eqref{eq:convergence_rate_PL},  we use \eqref{eq:filter_condition} into the relation above to obtain
    % \begin{align}
    %     \frac{9\mu}{6} - \frac{4L|\mathcal{B}_{k}|}{|\mathcal{H}|} \geq \frac{\mu}{6},\notag
    % \end{align}
    % using which \eqref{eq:Lyapunov_update_PL_2} becomes
     \begin{align*}
        \mathbb{E}[V_{k+1}]&\leq \Big(1-\frac{\mu\beta_{k}\mathcal{T}}{6}\Big)\mathbb{E}[V_{k}]\notag\\
       &\quad+150(L+1)^{2}\mathcal{T}^{2}\sigma^{2}\alpha^{2}_{k} + \frac{112(L+1)^{2}\mathcal{T}^{2}\sigma^{2}f\alpha_{k}^{2}}{|\mathcal{H}|},
    \end{align*}
    which by using $\beta_{k} = \frac{12}{\mu\mathcal{T}(1+h+k)}$  gives
\begin{align*}
    \mathbb{E}[V_{k+1}]&\leq \Big(1-\frac{2}{1+h+k}\Big)\mathbb{E}[V_{k}] \notag\\
       &\quad+ \frac{150\mathcal{T}\sigma^{2}C_{\alpha}^{2}}{(1+h+k)^{2}} + \frac{112\mathcal{T}^{2}\sigma^{2}fC_{\alpha}^{2}}{(1+h+k)^{2}|\mathcal{H}|}.
\end{align*}
Multiplying both sides of the above inequality by $(1+h+k)^2$ gives
\begin{align*}
    &(1+h+k)^{2}\mathbb{E}[V_{k+1}]\notag\\
    &\leq (h+k)^{2}\mathbb{E}[V_{k}] + 40\mathcal{T}\sigma^{2}C_{\alpha}^{2} + \frac{112\mathcal{T}^{2}\sigma^{2}fC_{\alpha}^{2}}{|\mathcal{H}|}\notag\\
    &\leq h^{2}\mathbb{E}[V_{0}]+ 150\mathcal{T}\sigma^{2}C_{\alpha}^{2}(k+1) + \frac{112\mathcal{T}^{2}\sigma^{2}fC_{\alpha}^{2}(k+1)}{|\mathcal{H}|},
\end{align*}
which when diving both sides by $(k + 1 + h)^2$  gives \eqref{eq:convergence_rate_PL}. 

% \begin{align*}
%     &\mathbb{E}[V_{k+1}] \notag\\
%     &\leq \frac{h^{2}\mathbb{E}[V_{0}]}{(1+h+k)^{2}} + \frac{150\mathcal{T}\sigma^{2}C_{\alpha}^{2}(k+1)}{(1+h+k)^{2}} + \frac{112\mathcal{T}^{2}\sigma^{2}fC_{\alpha}^{2}(k+1)}{(1+h+k)^{2}|\mathcal{H}|}\notag\\
%     &\leq \frac{h^{2}\mathbb{E}[V_{0}]}{(1+h+k)^{2}} + \frac{150\mathcal{T}\sigma^{2}C_{\alpha}^{2}}{(1+h+k)} + \frac{112\mathcal{T}^{2}\sigma^{2}fC_{\alpha}^{2}}{(1+h+k)|\mathcal{H}|}.
% \end{align*}
% This concludes our proof.

\end{proof}
\section{Simulations}\label{sec:simulations}

% \begin{figure*}[]
%     \centering
%      \subfigure{\includegraphics[width=0.45\linewidth, height=0.25\textwidth]{main files/figures/T3/SC/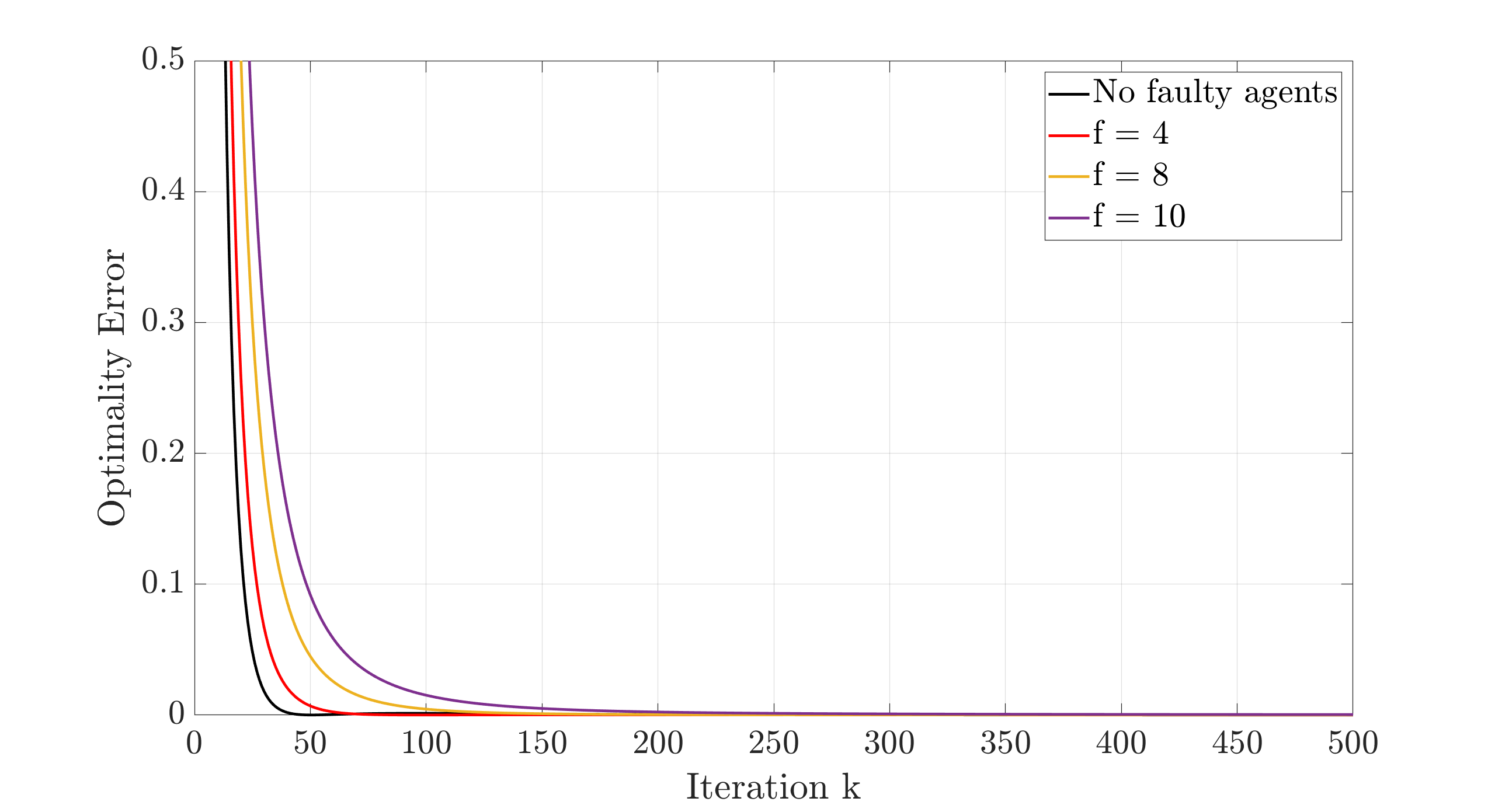}
%      \subfigure{%{0.3\textwidth}
%         \includegraphics[width=0.45\linewidth, height=0.25\textwidth]{main files/figures/T3/SC/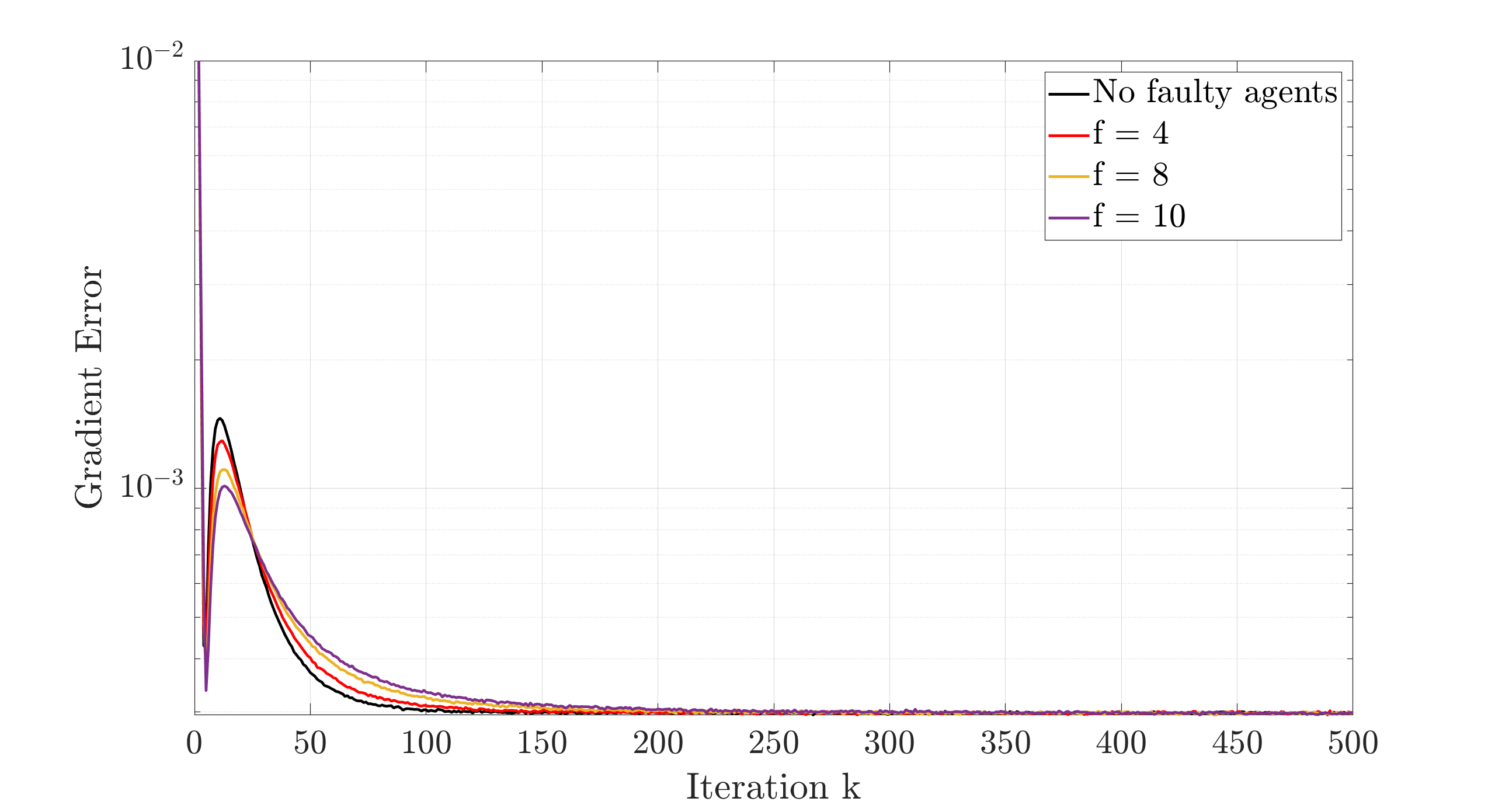}\label{fig:SC_plots}
%     \caption{The convergence of the optimal error, $\|\Bar{x}_{k} - x^{\star}\|^{2}$, and the gradient error, $W_{k}$, for the strongly convex setting.}
    
% \end{figure*}
\begin{figure*}[]


    \centering
    \subfloat[]{\includegraphics[width=0.45\linewidth, height=0.25\textwidth]{V_k.png}}
    \subfloat[]{\includegraphics[width=0.45\linewidth, height=0.25\textwidth]{W_k.png}}
    \caption{The convergence of the optimal error, $\|\Bar{x}_{k} - x^{\star}\|^{2}$, and the gradient error, $W_{k}$, for the strongly convex setting.}
    \label{fig:SC_plots}
\end{figure*}

\begin{figure*}[]
    \centering
    \subfloat[]{\includegraphics[width=0.45\linewidth, height=0.25\textwidth]{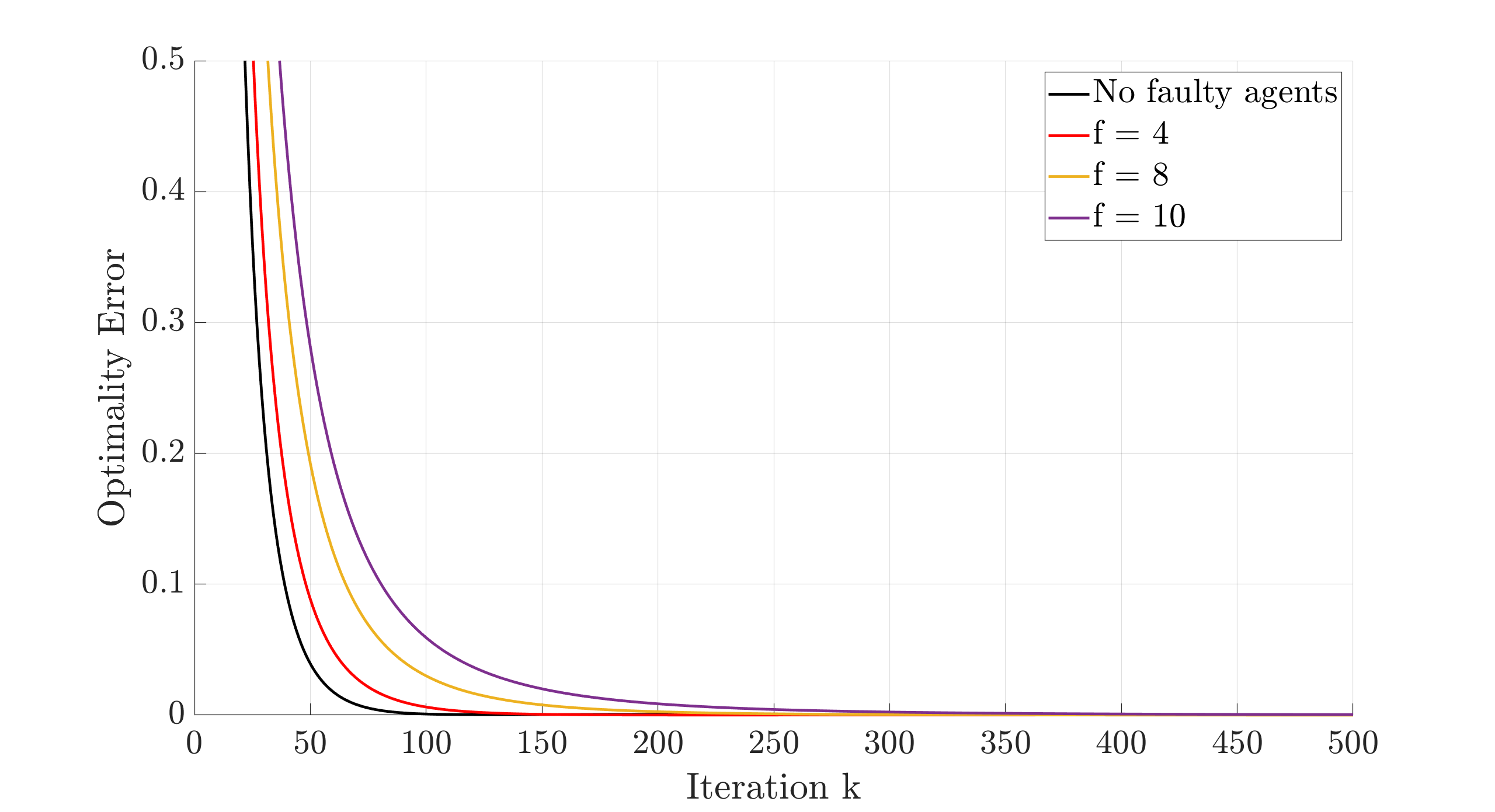}}
    \subfloat[]{\includegraphics[width=0.45\linewidth, height=0.25\textwidth]{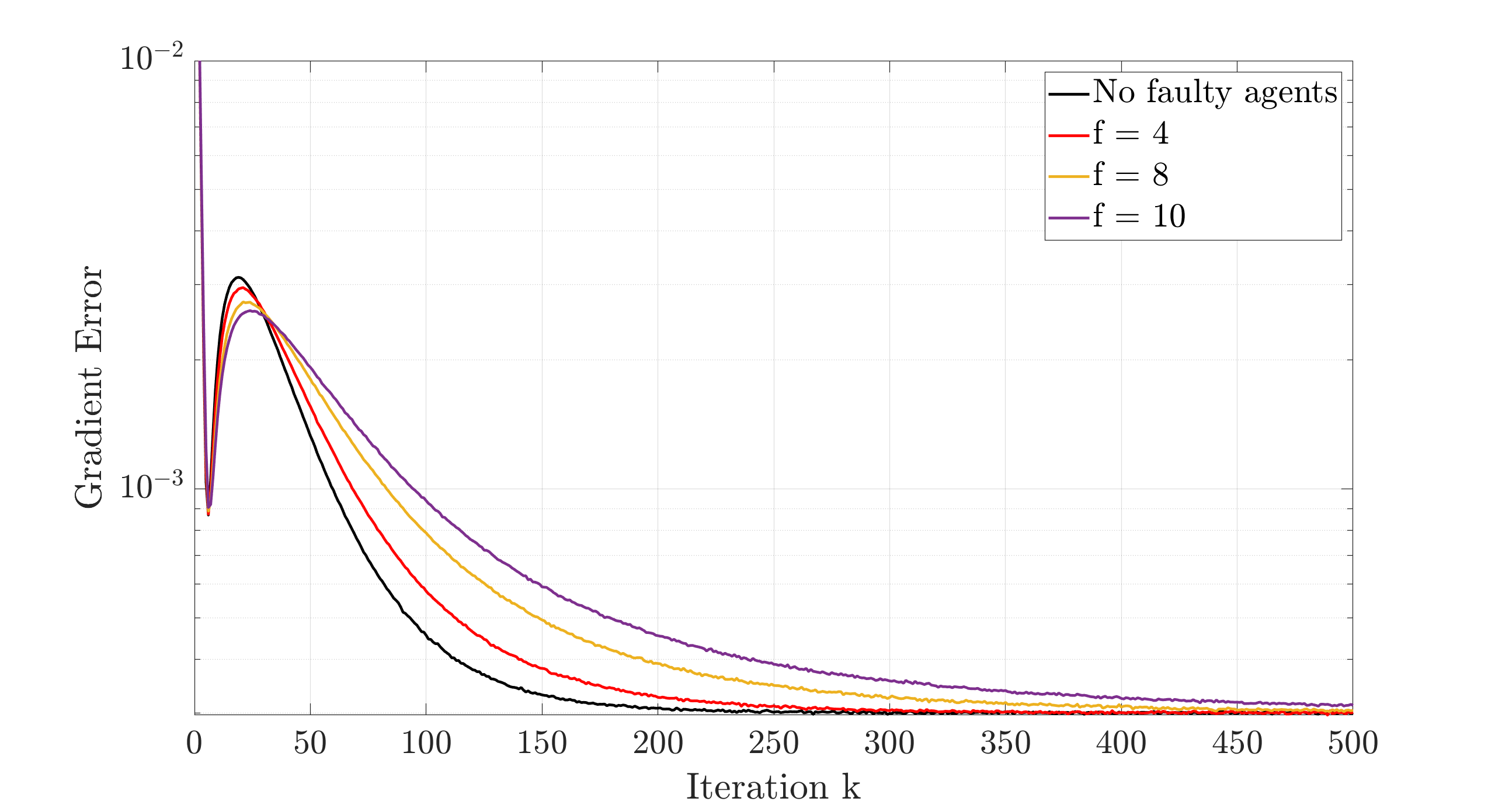}}
    \caption{The convergence of the optimal error, $q_{\mathcal{H}}(\Bar{x}_{k})-q_{\mathcal{H}}(x^{\star}_{\mathcal{H}})$, and the gradient error, $W_{k}$, for the P\L\; condition.}
    \label{fig:PL_plots}
\end{figure*}

% \begin{figure*}[]
%     \centering
%      \subfigure{\includegraphics[width=0.45\linewidth, height=0.25\textwidth]{main files/figures/T3/PL/V1.png}
%         \label{fig:gull}}
%      \subfigure{%{0.3\textwidth}
%         \includegraphics[width=0.45\linewidth, height=0.25\textwidth]{main files/figures/T3/PL/W1.png}
%         \label{fig:tiger}}
%     \caption{ The convergence of the optimal error, $q_{\mathcal{H}}(\Bar{x}_{k})-q_{\mathcal{H}}(x^{\star}_{\mathcal{H}})$, and the gradient error, $W_{k}$, for the P\L\; condition.
%     }\label{fig:PL_plots}
% \end{figure*}

% \begin{figure*}[]
%     \centering
%     \subfloat[]{\includegraphics[width=0.45\linewidth, height=0.25\textwidth]{main files/figures/T3/PL/V1.png}
%     \subfloat[]{\includegraphics[width=0.45\linewidth, height=0.25\textwidth]{main files/figures/T3/PL/W1.png}
%     \caption{The convergence of the optimal error, $q_{\mathcal{H}}(\Bar{x}_{k})-q_{\mathcal{H}}(x^{\star}_{\mathcal{H}})$, and the gradient error, $W_{k}$, for the P\L\; condition.}
%     \label{fig:PL_plots}
% \end{figure*}

In this section we present a few simulations to illustrate the convergence of Algorithm \ref{alg:cap} and the correctness of our theoretical results. For our simulations, we consider a network of $N = 50$ agents. Each non-faulty agent $i$ has access to 100 noisy observations of a 10-dimensional vector ${x}^{\star}$. Specifically, the sample set ${X}^i$ comprises $100$ samples distributed as ${X}^i_j = {x}^\star + {Z}_j$, where \({Z}_j \sim \mathcal{N}(0, I_d)\). On the other hand, a Byzantine faulty agent $j$ mimics the behavior of an honest agent but with different samples. Each sample for a Byzantine agent is given by ${X}^j_j = 2 \times {x}^\star + {Z}_j$, where ${Z}_j \sim \mathcal{N}(0, I_d)$, similar to the honest agents. This implies that while honest agents send information corresponding to Gaussian noisy observations of ${x}^\star$, the Byzantine agents send information corresponding to Gaussian noisy observations with the same variance but centered at $2 \times {x}^\star$.

We will simulate Algorithm \ref{alg:cap} in both strongly convex and P\L\; conditions, where we set the local steps $\mathcal{T} = 3$. In each case, we vary the number of Byzantine agents $f = 4,8,10$ to study the convergence of our algorithm when this number is changing.  For the strongly convex setting, we consider the local cost function of $i^{\text{th}}$ agent as

\begin{align}\label{eq:simulation_problem_SC}
    q^i(x; {X}^i) = \frac{1}{2} \|{x} - {X}^i\|^2.
\end{align}
% Here, ${x}^\star$ is the unique solution to the problem when considering any set of honest agents $\mathcal{H}$. 

For the P\L\; condition, we consider the following local cost function

\begin{align}\label{eq:simulation_problem_PL}
    q^i(x; {X}^i) = \frac{1}{2} \|{x} - {X}^i\|^2 + \frac{1}{2} \sin^2(\|{x} - {X}^i\|).
\end{align}
In this case, the global function $\sum_{i = 1}^{50} q^i(x; X^i)$ represents a non-convex function that satisfies the Polyak-Łojasiewicz (PL) condition. 

% The number of faulty agents $f$ varies across simulations and is selected from the set $f =\{0, 4, 8, 10\}$, where $f=0$ represents the non-faulty case. 

% For each simulation, we set the local steps $\mathcal{T} = 3$ at each iteration and the number of global iterations at the coordinator being $500$. 
%The initial global model $\bar{x}_0$ for the simulation is set to $\bar{x}_{(0)} = {x}^\star + {w}$, where ${w} \sim \mathcal{N}(0, I_d)$ is a small random perturbation. For each agent, the initial local gradient ${y}^i_{0,0}$ is set as the gradient of the local cost function $q^i$ evaluated at the initial global model.

%In the first experiment, we compute the optimality error $\bar{x}_{k}-x^{\star}$ and the gradient error $W_{k}$ defined in \eqref{eq:V_2_sc}. 

Our simulation results are shown in Figs. \ref{fig:SC_plots} and \ref{fig:PL_plots} for strongly convex and P\L\; conditions, respectively. 
% averaged over 100 observations. The plots are shown in Fig. \ref{fig:SC_plots}, illustrating the characteristics of the optimality error and gradient error across global iterations $k$. In the second experiment, where the global objective satisfies the PL condition, we plot the optimality error variable $q_{\mathcal{H}}(\bar{x}_{k}) - q_{\mathcal{H}}(x^{\star})$ and the gradient error $W_{k}$. The plots for this case are shown in Fig. \ref{fig:PL_plots}.
First, our simulations show that the optimization and gradient estimate errors converge to zero as expected.  Second, the rates of convergence seem to be $\mathcal{O}(1/k)$, which are consistent in both cases. Finally, the algorithm converges slower as the number of faulty agents increases, agreeing with our theoretical bounds in Theorems \ref{Theorem:SC_Tg1} and \ref{Theorem:PL_Tg1}.

% out  the impact of increasing the number of faulty agents on the convergence behavior of the algorithm. From Figs. \ref{fig:SC_plots} and \ref{fig:PL_plots}, we make the following observations. First, both the optimality error and gradient error converge to zero, consistent with our theoretical findings. Second, the optimality error converges faster than the gradient error.
% , which can be attributed to the choice of step sizes where $\beta_{k}$ is much smaller than $\alpha_{k}$.
% , resulting in faster updates for $y^{i}_{k,t}$ compared to $x^{i}_{k,t}$.
% Finally, as the number of Byzantine agents increases, it takes longer for the error variables to converge. This observation is consistent with our theoretical results, indicating that as the number of Byzantine agents increases, both the optimality and gradient errors increase, reflecting their dependence on the ratio $\frac{f}{N}$.
\section{Conclusion}
In this paper, we propose a new two-time-scale variant of the local SGD method to solve an exact Byzantine fault-tolerance problem under the $2f$-redundancy condition. 
%We also provide theoretical results to show that the proposed algorithm converges to the optimal solutions at an optimal rate $\mathcal{O}(1/k)$ when the underlying objective function satisfies either strong convexity or the P\L\; condition. 
Our theoretical analysis demonstrates that our approach effectively mitigates the impact of noise from stochastic gradients and the interference of Byzantine agents. Notably, our algorithm achieves an optimal rate $\mathcal{O}(1/k)$ when the underlying objective function satisfies either strong convexity or the P\L\; condition, similar to that of the Byzantine-free setting.

\bibliographystyle{IEEEtran}
\bibliography{IEEEfull,references}

\appendix[Proofs of Lemmas \ref{lem:Lemma_V2_SC}--\ref{lem:Lemma_V1_PL}]
We now proceed to present the analysis of Lemmas \ref{lem:Lemma_V2_SC}--\ref{lem:Lemma_V1_PL}. First, we rewrite \eqref{eq:x_bar_update_0} as follows
\begin{align}\label{eq:x_bar_update}
    \bar{x}_{k+1}&= \bar{x}_{k} - \mathcal{T}\beta_{k}\nabla q_{\mathcal{H}}(\bar{x}_{k}) - \frac{\beta_{k}}{|\mathcal{H}|}\sum_{i \in \mathcal{H}}\sum_{t=0}^{\mathcal{T}-1}e^{i}_{k,t} + \mathcal{E}_{x}.
\end{align}
where 
    \begin{align}\label{eq:V_x}
        \mathcal{E}_{x} = \frac{1}{|\mathcal{H}|}\Big[\sum_{i\in \mathcal{B}_{k}}(x^{i}_{k,\mathcal{T}} -\Bar{x}_{k}) - \sum_{i\in \mathcal{H} \backslash \mathcal{H}_{k}}(x^{i}_{k,\mathcal{T}}-\Bar{x}_{k})\Big].
    \end{align}
Next, we will consider the following results that will be used in our analysis later. 
\begin{lemma}\label{lem:x_k_t_minus_x_bar_squared}
For each honest client $i \in \mathcal{H}$ we have
    \begin{align}\label{lem:x_k_t_minus_x_bar_squared:ineq}
        \hspace{-0.22cm}\|x^{i}_{k,t+1}-\Bar{x}_{k}\|^{2} \leq 2L^{2}t^{2}\beta_{k}^{2}\|\Bar{x}_{k}-x^{\star}_{\mathcal{H}}\|^{2} + 2t\beta_{k}^{2}\sum_{l=0}^{t}\|e^{i}_{k,l}\|^{2}.
    \end{align}
\end{lemma}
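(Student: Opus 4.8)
The plan is to unroll the inner recursion \eqref{alg:xi} so that the displacement $x^{i}_{k,t+1}-\bar{x}_{k}$ is expressed purely in terms of the iterates $y^{i}_{k,l}$. Since each honest agent initializes with $x^{i}_{k,0}=\bar{x}_{k}$, telescoping $x^{i}_{k,t+1}=x^{i}_{k,t}-\beta_{k}y^{i}_{k,t}$ gives $x^{i}_{k,t+1}-\bar{x}_{k}=-\beta_{k}\sum_{l=0}^{t}y^{i}_{k,l}$. This is precisely the computation already carried out to obtain \eqref{eq:x_update_T}, now kept at a generic inner index rather than specialized to $t=\mathcal{T}-1$.

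Next, I would substitute the definition \eqref{notation:e_i}, i.e. $y^{i}_{k,l}=e^{i}_{k,l}+\nabla q^{i}(\bar{x}_{k})$, to separate the stochastic fluctuation from the deterministic drift, obtaining $x^{i}_{k,t+1}-\bar{x}_{k}=-\beta_{k}\sum_{l=0}^{t}e^{i}_{k,l}-(t+1)\beta_{k}\nabla q^{i}(\bar{x}_{k})$. Applying the elementary inequality $\|a+b\|^{2}\le 2\|a\|^{2}+2\|b\|^{2}$ then splits the squared norm into a gradient-drift term and an error-sum term. For the drift term I would invoke the $2f$-redundancy property \eqref{eq:redundancy_condition}: since $x^{\star}_{\mathcal{H}}\in\mathcal{X}^{\star}_{i}$ for every $i\in\mathcal{H}$, the point $x^{\star}_{\mathcal{H}}$ minimizes each $q^{i}$ and hence $\nabla q^{i}(x^{\star}_{\mathcal{H}})=0$; Assumption \ref{eq:Assumption_Lipschitz} then yields $\|\nabla q^{i}(\bar{x}_{k})\|=\|\nabla q^{i}(\bar{x}_{k})-\nabla q^{i}(x^{\star}_{\mathcal{H}})\|\le L\|\bar{x}_{k}-x^{\star}_{\mathcal{H}}\|$, which produces the first term on the right-hand side. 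For the error-sum term I would apply the Cauchy--Schwarz (Jensen) bound $\|\sum_{l=0}^{t}e^{i}_{k,l}\|^{2}\le(t+1)\sum_{l=0}^{t}\|e^{i}_{k,l}\|^{2}$, producing the second term.

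I do not expect a genuine obstacle here: the argument is a direct telescoping followed by two standard norm inequalities. The only points requiring care are the index bookkeeping---the inner sum carries $t+1$ summands and the drift coefficient is $(t+1)^{2}$, which I would track and match against the $t$- and $t^{2}$-factors appearing in the statement (absorbing the constants as needed)---and the correct use of the redundancy condition, which is exactly what guarantees that the honest gradients vanish at the common minimizer $x^{\star}_{\mathcal{H}}$ and thereby lets the drift be controlled by the optimality gap $\|\bar{x}_{k}-x^{\star}_{\mathcal{H}}\|$ rather than by an absolute gradient bound.
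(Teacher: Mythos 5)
Your proposal is correct and follows essentially the same route as the paper: telescope the inner recursion \eqref{alg:xi}, decompose $y^{i}_{k,l}=e^{i}_{k,l}+\nabla q^{i}(\bar{x}_{k})$, use the redundancy property \eqref{eq:redundancy_condition} together with Assumption \ref{eq:Assumption_Lipschitz} to bound the drift by $L\|\bar{x}_{k}-x^{\star}_{\mathcal{H}}\|$, and apply Jensen's inequality to the error sum (the paper merely applies Jensen to the full telescoped sum before splitting, rather than after; the two orderings give identical bounds). One remark on the index issue you flag: your $(t+1)$- and $(t+1)^{2}$-factors cannot actually be ``absorbed'' into the stated $t$- and $t^{2}$-factors---at $t=0$ the stated bound reads $\|x^{i}_{k,1}-\bar{x}_{k}\|^{2}\le 0$, which is false in general---but this off-by-one sits in the paper itself, whose proof invokes $\big\|\sum_{l=0}^{t}a_{l}\big\|^{2}\le t\sum_{l=0}^{t}\|a_{l}\|^{2}$ with the loose constant $t$ for a sum of $t+1$ terms; your more careful version with $t+1$ in place of $t$ is the defensible statement, and since every downstream use of the lemma takes $t\le\mathcal{T}-1$ and then bounds the factor by $\mathcal{T}$, nothing else in the paper is affected.
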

\begin{proof}
    By \eqref{alg:xi} and \eqref{notation:e_i}, we have 
\begin{align} %\label{eq:Lemma_1_last_but_one_result}
    &\|x^{i}_{k,t+1}-\Bar{x}_{k}\|^{2} = \Big\|\sum_{l=0}^{t}(x^{i}_{k,l+1}-x^{i}_{k,l})\Big\|^{2}\notag\\
    & \leq t\sum_{l=0}^{t}\|x^{i}_{k,l+1}-x^{i}_{k,l}\|^{2} = \beta_{k}^{2}t\sum_{l=0}^{t}\|y^{i}_{k,l}\|^{2}\notag\\
    &= \beta_{k}^{2}t\sum_{l=0}^{t}\|e_{k,t}^{i} + \nabla q^{i}(\bar{x}_{k})\|^{2}\notag\\
    &\leq 2\beta_{k}^{2}t\sum_{l=0}^{t}\big(\|e_{k,t}^{i}\|^2  + \|\nabla q^{i}(\bar{x}_{k})\|^{2}\big)\notag\\
    &= 2\beta_{k}^{2}t\sum_{l=0}^{t}\big(\|e_{k,t}^{i}\|^2  + \|\nabla q^{i}(\bar{x}_{k} -\nabla q^{i}(x_{\mathcal{H}}^{\star})\|^{2}\big)\notag\\
    &\leq 2L^{2}t^{2}\beta_{k}^{2}\|\Bar{x}_{k}-x^{\star}_{\mathcal{H}}\|^{2} + 2t\beta_{k}^{2}\sum_{l=0}^{t}\|e^{i}_{k,l}\|^{2},\notag
\end{align}
where in the last equality we use \eqref{eq:redundancy_condition} to have $\nabla q^{i}(x_{\mathcal{H}}^{\star}) = 0$ and the last inequality is due to Assumption \ref{eq:Assumption_Lipschitz}. This concludes our proof. 
\end{proof}
\begin{lemma}\label{lem:e_k_t}
For each honest client $i \in \mathcal{H}$ we have 
%The squared \textit{gradient error} defined in \eqref{eq:e_k_t} for any honest agent $i \in \mathcal{H}$ satifies
\begin{align}\label{lem:e_k_t:eq}
&\mathbb{E}[\|e_{k,t+1}^{i}\|^{2}]\notag\\
&\leq (1-\alpha_{k})\mathbb{E}[\|e^{i}_{k,t}\|^{2}] + \alpha_{k}L^{2}\mathbb{E}[\|x^{i}_{k,t}-\bar{x}_{k}\|^{2}] + \alpha^{2}_{k}\sigma^{2}. 
\end{align}    
\end{lemma}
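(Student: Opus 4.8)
The plan is to analyze the recursion for the gradient estimate error $e_{k,t+1}^{i}$ directly from the update rule \eqref{alg:yi}. Starting from the definition in \eqref{notation:e_i}, I would write
\begin{align*}
e_{k,t+1}^{i} &= y_{k,t+1}^{i} - \nabla q^{i}(\bar{x}_{k})\\
&= (1-\alpha_{k})y_{k,t}^{i} + \alpha_{k}\nabla q^{i}(x_{k,t}^{i};\Delta_{k,t}^{i}) - \nabla q^{i}(\bar{x}_{k}).
\end{align*}
The key algebraic move is to add and subtract $(1-\alpha_{k})\nabla q^{i}(\bar{x}_{k})$ and $\alpha_{k}\nabla q^{i}(x_{k,t}^{i})$ so as to regroup the right-hand side into a contraction term plus a bias term plus a pure-noise term. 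Concretely, I would rewrite it as
\begin{align*}
e_{k,t+1}^{i} &= (1-\alpha_{k})e_{k,t}^{i} + \alpha_{k}\big(\nabla q^{i}(x_{k,t}^{i}) - \nabla q^{i}(\bar{x}_{k})\big)\\
&\quad + \alpha_{k}\big(\nabla q^{i}(x_{k,t}^{i};\Delta_{k,t}^{i}) - \nabla q^{i}(x_{k,t}^{i})\big).
\end{align*}

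Next I would take the squared norm and pass to conditional expectation given the filtration $\mathcal{P}_{k,t}$. The crucial observation is that the last term is the stochastic sampling error, which by Assumption \ref{eq:Assumption_noise} is zero-mean conditioned on $\mathcal{P}_{k,t}$ and has second moment bounded by $\sigma^{2}$; since $x_{k,t}^{i}$ and $e_{k,t}^{i}$ are $\mathcal{P}_{k,t}$-measurable, all cross terms involving this noise vanish in expectation. This leaves the squared norm of the deterministic part $(1-\alpha_{k})e_{k,t}^{i} + \alpha_{k}(\nabla q^{i}(x_{k,t}^{i}) - \nabla q^{i}(\bar{x}_{k}))$ plus $\alpha_{k}^{2}\sigma^{2}$. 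For the deterministic part I would use the convexity-of-norm bound $\|(1-\alpha_{k})a + \alpha_{k}b\|^{2} \leq (1-\alpha_{k})\|a\|^{2} + \alpha_{k}\|b\|^{2}$ (valid since $\alpha_{k}\in[0,1]$ by \eqref{eq:step_sizes}), which yields
\begin{align*}
(1-\alpha_{k})\|e_{k,t}^{i}\|^{2} + \alpha_{k}\|\nabla q^{i}(x_{k,t}^{i}) - \nabla q^{i}(\bar{x}_{k})\|^{2}.
\end{align*}

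Finally I would apply Assumption \ref{eq:Assumption_Lipschitz} to bound the middle term by $\alpha_{k}L^{2}\|x_{k,t}^{i}-\bar{x}_{k}\|^{2}$, take full expectation, and collect the three resulting terms to obtain exactly \eqref{lem:e_k_t:eq}. I do not expect a serious obstacle here; the main subtlety is the correct regrouping in the first step so that the Jensen/convexity inequality applies cleanly with weights $1-\alpha_{k}$ and $\alpha_{k}$, and making sure that the noise term genuinely decouples. The constraint $\alpha_{k}\leq 1$ from \eqref{eq:step_sizes} is what makes the convexity bound legitimate, so I would flag that as the one place where a hypothesis is silently used.
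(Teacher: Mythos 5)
Your proof is correct and follows essentially the same route as the paper: the identical three-term decomposition of $e^{i}_{k,t+1}$, the same use of Assumption \ref{eq:Assumption_noise} to kill the noise cross terms and bound the variance, and Assumption \ref{eq:Assumption_Lipschitz} for the gradient difference. The only cosmetic difference is that you invoke convexity of $\|\cdot\|^{2}$ on the convex combination with weights $1-\alpha_{k}$ and $\alpha_{k}$, whereas the paper expands the square and applies Young's inequality $2a^{T}b \leq \|a\|^{2}+\|b\|^{2}$ to the cross term --- these are algebraically the same step, and your flag that $\alpha_{k}\leq 1$ is the hypothesis making it legitimate is exactly right.
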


\begin{proof}
    Using \eqref{notation:e_i} and \eqref{alg:yi} we consider
\begin{align*}
    &e^{i}_{k,t+1} = (1-\alpha_{k})y^{i}_{k,t} + \alpha_{k}\nabla q^{i}(x^{i}_{k,t};\Delta^{i}_{k,t}) - \nabla q^{i}(\Bar{x}_{k})\notag\\
    % & = (1-\alpha_{k})e^{i}_{k,t}  + \alpha_{k}\nabla q^{i}(x^{i}_{k,t};\Delta^{i}_{k,t}) - \nabla q^{i}(\Bar{x}_{k})\notag\\
    % &\quad + (1-\alpha_{k})\nabla q^{i}(\Bar{x}_{k})\notag\\
    & = (1-\alpha_{k})e^{i}_{k,t} + \alpha_{k}(\nabla q^{i}(x^{i}_{k,t};\Delta^{i}_{k,t}) - \nabla q^{i}(x^{i}_{k,t}))\notag\\
    &\quad +\alpha_{k}(\nabla q^{i}(x^{i}_{k,t})-\nabla q^{i}(\Bar{x}_{k})),
\end{align*}
which by using Assumptions \ref{eq:Assumption_noise} and \ref{eq:Assumption_Lipschitz} gives
\begin{align*}
    &\mathbb{E}[\|e^{i}_{k,t+1}\|^{2}|\mathcal{P}_{k,t}]\notag\\
    &= (1-\alpha_{k})^{2}\|e^{i}_{k,t}\|^{2} + \alpha^{2}_{k}\|\nabla q^{i}(x^{i}_{k,t})-\nabla q^{i}(\Bar{x}_{k})\|^{2}\notag\\
    &\quad + \alpha^{2}_{k}\mathbb{E}[\|\nabla q^{i}(x^{i}_{k,t};\Delta^{i}_{k,t}) - \nabla q^{i}(x^{i}_{k,t})\|^{2}|\mathcal{P}_{k,t}]\notag\\
    &\quad + 2\alpha_{k}(1-\alpha_{k})(\nabla q^{i}(x^{i}_{k,t})-\nabla q^{i}(\Bar{x}_{k}))^{T}e^{i}_{k,t}\notag\\
    &\quad + 2 \alpha_{k}(1-\alpha_{k})\mathbb{E}[(\nabla q^{i}(x^{i}_{k,t},\Delta^{i}_{k,t}) - \nabla q^{i}(x^{i}_{k,t}))|\mathcal{P}_{k,t}]^{T}e^{i}_{k,t}\notag\\
    &\quad +2\alpha^{2}_{k}\mathbb{E}\big[(\nabla q^{i}(x^{i}_{k,t},\Delta^{i}_{k,t}) - \nabla q^{i}(x^{i}_{k,t}))|\mathcal{P}_{k,t}\big]^{T}\notag\\
    &\hspace{1.5cm}\times (\nabla q^{i}(x^{i}_{k,t})-\nabla q^{i}(\Bar{x}_{k})\notag\\
    &= (1-\alpha_{k})^{2}\|e^{i}_{k,t}\|^{2} + \alpha^{2}_{k}\|\nabla q^{i}(x^{i}_{k,t})-\nabla q^{i}(\Bar{x}_{k})\|^{2}\notag\\
    &\quad + \alpha^{2}_{k}\mathbb{E}[\|\nabla q^{i}(x^{i}_{k,t};\Delta^{i}_{k,t}) - \nabla q^{i}(x^{i}_{k,t})\|^{2}|\mathcal{P}_{k,t}]\notag\\
    &\quad + 2\alpha_{k}(1-\alpha_{k})(\nabla q^{i}(x^{i}_{k,t})-\nabla q^{i}(\Bar{x}_{k}))^{T}e^{i}_{k,t}\notag\\
    &\leq (1-\alpha_{k})^{2}\|e^{i}_{k,t}\|^{2} + L^2\alpha^{2}_{k}\|x^{i}_{k,t} - \bar{x}_{k}\|^{2} + \alpha^{2}_{k}\sigma^{2}\notag\\
    &\quad  + 2\alpha_{k}(1-\alpha_{k})(\nabla q^{i}(x^{i}_{k,t}) - \nabla q^{i}(\Bar{x}_{k}))^{T}e^{i}_{k,t}.
\end{align*}
Taking the expectation on both sides of the preceding equation and using the Cauchy-Schwarz inequality we obtain \eqref{lem:e_k_t:eq}, i.e.,
\begin{align} %\label{eq:e_k_t_iteration}
    &\mathbb{E}[\|e^{i}_{k,t+1}\|^{2}] \notag\\
    &\leq (1-\alpha_{k})^{2}\mathbb{E}[\|e^{i}_{k,t}\|^{2}] + L^{2}\alpha^{2}_{k}\mathbb{E}[\|x^{i}_{k,t} - \bar{x}_{k}\|^{2}\|^{2}] + \alpha^{2}_{k}\sigma^{2}\notag\\
    &\quad  + \alpha_{k}(1-\alpha_{k})\mathbb{E}[\|e^{i}_{k,t}\|^{2}]+ L^{2}\alpha_{k}(1-\alpha_{k})\mathbb{E}[\|x^{i}_{k,t}-\Bar{x}_{k}\|^{2}]\notag\\
    & = (1-\alpha_{k})\mathbb{E}[\|e^{i}_{k,t}\|^{2}] + L^{2}\alpha_{k}\mathbb{E}[\|x^{i}_{k,t} - \bar{x}_{k}\|^{2}] + \alpha^{2}_{k}\sigma^{2}\notag.
\end{align}
\end{proof}
\vspace{-1cm}
\begin{lemma}\label{lem:e_k_t_V12}
Let $\alpha_{k}$ satisfy for all $k\geq 0$
\begin{align}\label{eq:alpha_choice_0}
    \alpha_{k} \leq \frac{1}{2L\mathcal{T}}\cdot
\end{align}
Then the following holds
    \begin{align}\label{lem:e_k_t_V12:ineq}
        &\frac{1}{|\mathcal{H}|}\sum_{i \in \mathcal{H}}\sum_{t=0}^{\mathcal{T}-1}\mathbb{E}[\|e^{i}_{k,t}\|^{2}] \notag\\
        &\leq 2\mathcal{T}\mathbb{E}[W_{k}] + 2\mathcal{T}\sigma^{2}\alpha_{k} + 4L^4\mathcal{T}^3\beta_{k}^2\mathbb{E}[\|\bar{x}_{k}-x_{\mathcal{H}}^{\star}\|^2].
    \end{align}
\end{lemma}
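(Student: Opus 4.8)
The plan is to combine the two one-step recursions already established—Lemma~\ref{lem:e_k_t} for the gradient-tracking error and Lemma~\ref{lem:x_k_t_minus_x_bar_squared} for the client drift—and then close a self-referential inequality on the aggregate quantity $\Sigma_k^i := \sum_{t=0}^{\mathcal{T}-1}\mathbb{E}[\|e^i_{k,t}\|^2]$. Fix an honest client $i \in \mathcal{H}$ and abbreviate $S_t := \mathbb{E}[\|e^i_{k,t}\|^2]$, $M_t := \mathbb{E}[\|x^i_{k,t}-\bar{x}_k\|^2]$, and $a := \mathbb{E}[\|\bar{x}_k - x^\star_{\mathcal{H}}\|^2]$. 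First I would unroll the bound of Lemma~\ref{lem:e_k_t}, namely $S_{t+1}\le(1-\alpha_k)S_t+\alpha_k L^2 M_t+\alpha_k^2\sigma^2$. Using $0\le 1-\alpha_k\le 1$ and the geometric sum $\sum_{j\ge 0}(1-\alpha_k)^j\le 1/\alpha_k$, this gives the crude but convenient estimate $S_t \le S_0 + \alpha_k L^2\sum_{j=0}^{t-1}M_j + \alpha_k\sigma^2$ for every $t\le\mathcal{T}$.

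Next I would control the drift terms through Lemma~\ref{lem:x_k_t_minus_x_bar_squared}. Over-bounding the indices by $\mathcal{T}$ and the partial sums $\sum_{l=0}^{j-1}S_l$ by $\Sigma_k^i$ yields the uniform estimate $M_j \le 2L^2\mathcal{T}^2\beta_k^2\, a + 2\mathcal{T}\beta_k^2\,\Sigma_k^i$ (with $M_0=0$ since $x^i_{k,0}=\bar{x}_k$). Substituting this into the unrolled bound for $S_t$ and summing over $t=0,\dots,\mathcal{T}-1$ produces the key self-referential inequality
\begin{align*}
\Sigma_k^i \le \mathcal{T}S_0 + 2L^4\mathcal{T}^4\alpha_k\beta_k^2\, a + 2L^2\mathcal{T}^3\alpha_k\beta_k^2\,\Sigma_k^i + \mathcal{T}\alpha_k\sigma^2.
\end{align*}

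The crux—and the step I expect to be the main obstacle—is absorbing the $\Sigma_k^i$ term on the right-hand side into the left. Here I would invoke the step-size restriction $\alpha_k\le\frac{1}{2L\mathcal{T}}$ from \eqref{eq:alpha_choice_0} together with $\beta_k\le\alpha_k$ to show the coupling coefficient obeys $2L^2\mathcal{T}^3\alpha_k\beta_k^2 \le \tfrac{1}{2}\mathcal{T}\alpha_k \le \frac{1}{4L}\le\frac12$ (using the standard normalization $L\ge 1$), so that $\Sigma_k^i\le 2\mathcal{T}S_0 + 4L^4\mathcal{T}^4\alpha_k\beta_k^2\, a + 2\mathcal{T}\alpha_k\sigma^2$. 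Finally I would simplify $\mathcal{T}^4\alpha_k\le\mathcal{T}^3$ via $\mathcal{T}\alpha_k\le 1$ (again from \eqref{eq:alpha_choice_0}) to reduce the middle term to $4L^4\mathcal{T}^3\beta_k^2\, a$, and then average over $i\in\mathcal{H}$, which replaces $S_0=\|e^i_{k,0}\|^2$ by $W_k$ according to its definition \eqref{eq:V_2_sc} while leaving $a$ and $\sigma^2$ untouched. This delivers exactly \eqref{lem:e_k_t_V12:ineq}. The only delicate points are keeping the index and partial-sum over-bounds mutually consistent so that the coupling coefficient provably stays below $1/2$, and checking the two places where $\mathcal{T}\alpha_k\le 1$ is used to collapse the extra factors of $\mathcal{T}$ and $\alpha_k$.
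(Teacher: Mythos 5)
Your overall architecture matches the paper's: unroll the recursion of Lemma~\ref{lem:e_k_t}, control the drift via Lemma~\ref{lem:x_k_t_minus_x_bar_squared}, close a self-referential inequality on $\Sigma_k^i$, and absorb the coupling term at the cost of a factor $2$. The gap is in how you discard the decay factors. By bounding $(1-\alpha_k)^{t-1-j}\le 1$ inside the drift sum, your coupling coefficient becomes $2L^2\mathcal{T}^3\alpha_k\beta_k^2$ and your drift-to-optimality coefficient becomes $2L^4\mathcal{T}^4\alpha_k\beta_k^2$; both carry an extra factor $\mathcal{T}\alpha_k$ relative to what the lemma can tolerate. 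Under the only hypothesis available, $\beta_k\le\alpha_k\le\frac{1}{2L\mathcal{T}}$, the best available bound is $\mathcal{T}\alpha_k\le\frac{1}{2L}$, so your coupling coefficient is only controlled by $\frac{1}{4L}$, and your collapse of $\mathcal{T}^4\alpha_k$ to $\mathcal{T}^3$ needs $\mathcal{T}\alpha_k\le 1$. Both steps fail when $L<1/2$: for instance with $L=\tfrac18$, $\mathcal{T}\ge 4$, and $\alpha_k=\beta_k=\frac{1}{2L\mathcal{T}}$, the coupling coefficient equals $\frac{1}{4L}=2>1$ and the absorption is impossible. Your appeal to ``the standard normalization $L\ge 1$'' does not rescue this: the paper assumes only $L>0$ (with $\mu\in(0,L]$), and enlarging $L$ is not without loss of generality here because $L$ appears on both sides of the lemma --- it loosens the hypothesis you are permitted to use ($\alpha_k\le\frac{1}{2L\mathcal{T}}$) and tightens the conclusion you must prove (the constant $4L^4$) --- so the version of the lemma with $L$ replaced by $\max\{L,1\}$ neither implies nor is implied by the stated one.

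The missing idea, and exactly what the paper does, is to keep the geometric decay rather than crush it. Retaining $(1-\alpha_k)^{t-\ell-1}$ and using $\alpha_k\sum_{\ell=0}^{t-1}(1-\alpha_k)^{t-1-\ell}\le 1$ when substituting the drift bound makes the product of $\alpha_k$ with a sum of up to $\mathcal{T}$ terms cost a factor $1$, not $\mathcal{T}\alpha_k$. The per-$t$ bound then reads $S_t\le(1-\alpha_k)^t S_0+\alpha_k\sigma^2+2L^4\mathcal{T}^2\beta_k^2\,a+2L^2\mathcal{T}\beta_k^2\,\Sigma_k^i$; summing over $t=0,\dots,\mathcal{T}-1$ gives coupling coefficient $2L^2\mathcal{T}^2\beta_k^2$, which is $\le\frac12$ directly from $\beta_k\le\frac{1}{2L\mathcal{T}}$ with no condition on $L$ whatsoever, and gives the $a$-coefficient $2L^4\mathcal{T}^3\beta_k^2$, which already has the right power of $\mathcal{T}$ with no leftover $\alpha_k$ to dispose of. With that single change (everything else in your argument --- the noise term via the geometric sum, $M_0=0$, the averaging over $i\in\mathcal{H}$ that produces $W_k$ --- is fine), your proof closes and yields \eqref{lem:e_k_t_V12:ineq} verbatim.
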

\begin{proof}
    By taking \eqref{lem:e_k_t:eq} recursively and since $\alpha_{k} < 1$ we have
    \begin{align*}
        \mathbb{E}[\|e^{i}_{k,t}\|^{2}]
        &\leq (1-\alpha_{k})^{t}\mathbb{E}[\|e^{i}_{k,0}\|^{2}] + \alpha_{k}^2\sigma^{2}\sum_{l=0}^{t-1}(1-\alpha_{k})^{t-1-\ell}\notag\\
        &\quad  + \alpha_{k}L^{2}\sum_{\ell = 0}^{t-1}(1-\alpha_{k})^{t-l-1}\mathbb{E}[\|x^{i}_{k,l}-\bar{x}_{k}\|^{2}]\notag\\
        &\leq (1-\alpha_{k})^{t}\mathbb{E}[\|e^{i}_{k,0}\|^{2}] + \alpha_{k}\sigma^{2}\notag\\
        &\quad+ \alpha_{k}L^{2}\sum_{\ell = 0}^{t-1}(1-\alpha_{k})^{t-l-1}\mathbb{E}[\|x^{i}_{k,l}-\bar{x}_{k}\|^{2}],
    \end{align*}
    which by using \eqref{lem:x_k_t_minus_x_bar_squared:ineq} gives
    \begin{align*}
        &\mathbb{E}[\|e^{i}_{k,t}\|^{2}]\notag\\
        &\leq (1-\alpha_{k})^{t}\mathbb{E}[\|e^{i}_{k,0}\|^{2}] + \alpha_{k}\sigma^{2}\notag\\
        &\quad +2L^4\mathcal{T}^2\alpha_{k}\beta_{k}^2\sum_{\ell=0}^{t-1}(1-\alpha_{k})^{t-\ell-1}\mathbb{E}[\|\bar{x}_{k}-x_{\mathcal{H}}^{\star}\|^2]\notag\\
        &\quad +2L^2\mathcal{T}\alpha_{k}\beta_{k}^2\sum_{\ell=0}^{t-1}(1-\alpha_{k})^{t-\ell-1}\sum_{m = 0}^{\ell-1}\mathbb{E}[\|e_{k,m}^{i}\|^2]\notag\\
        &\leq (1-\alpha_{k})^{t}\mathbb{E}[\|e^{i}_{k,0}\|^{2}] + \alpha_{k}\sigma^{2}+2L^4\mathcal{T}^2\beta_{k}^2\mathbb{E}[\|\bar{x}_{k}-x_{\mathcal{H}}^{\star}\|^2]\notag\\
        &\quad +2L^2\mathcal{T}\alpha_{k}\beta_{k}^2\sum_{\ell=0}^{t-1}(1-\alpha_{k})^{t-\ell-1}\sum_{m = 0}^{\mathcal{T}-1}\mathbb{E}[\|e_{k,m}^{i}\|^2]\notag\\
        &\leq (1-\alpha_{k})^{t}\mathbb{E}[\|e^{i}_{k,0}\|^{2}] + \alpha_{k}\sigma^{2} + 2L^4\mathcal{T}^2\beta_{k}^2\mathbb{E}[\|\bar{x}_{k}-x_{\mathcal{H}}^{\star}\|^2]\notag\\
        &\quad +2L^2\mathcal{T}\beta_{k}^2\sum_{m = 0}^{\mathcal{T}-1}\mathbb{E}[\|e_{k,m}^{i}\|^2].
    \end{align*}
Using the relation above and the definition of $W_k$ in \eqref{eq:V_2_sc} we have
    \begin{align*}
        &\frac{1}{|\mathcal{H}|}\sum_{i \in \mathcal{H}}\sum_{t=0}^{\mathcal{T}-1}\mathbb{E}[\|e^{i}_{k,t}\|^{2}] \notag\\
        &\leq \frac{1}{|\mathcal{H}|}\sum_{i \in \mathcal{H}}\sum_{t=0}^{\mathcal{T}-1}(1-\alpha_{k})^{t}\mathbb{E}[\|e^{i}_{k,0}\|^{2}] + \mathcal{T}\sigma^{2}\alpha_{k}\notag\\
        &\quad + 2L^4\mathcal{T}^3\beta_{k}^2\mathbb{E}[\|\bar{x}_{k}-x_{\mathcal{H}}^{\star}\|^2] \notag\\
        &\quad + 2L^2\mathcal{T}^2\beta_{k}^2\frac{1}{|\mathcal{H}|}\sum_{i \in \mathcal{H}}\sum_{m = 0}^{\mathcal{T}-1}\mathbb{E}[\|e_{k,m}^{i}\|^2]\notag\\
        &\leq\mathbb{E}[W_{k}] + \mathcal{T}\sigma^{2}\alpha_{k} + 2L^4\mathcal{T}^3\beta_{k}^2\mathbb{E}[\|\bar{x}_{k}-x_{\mathcal{H}}^{\star}\|^2] \notag\\
        &\quad + 2L^2\mathcal{T}^2\beta_{k}^2\frac{1}{|\mathcal{H}|}\sum_{i \in \mathcal{H}}\sum_{m = 0}^{\mathcal{T}-1}\mathbb{E}[\|e_{k,m}^{i}\|^2],
    \end{align*}
    where the last inequality we use the fact that $1-\alpha_{k} \leq 1$.  Since $\beta_{k}\leq\alpha_{k} \leq 1/2L\mathcal{T}$ implying $1-2L^2T^2\beta_{k}^2 > 1/2$, rearranging the preceding relation we obtain \eqref{lem:e_k_t_V12:ineq}.
    
    % \begin{align*}
    %      &\frac{1}{|\mathcal{H}|}\sum_{i \in \mathcal{H}}\sum_{t=0}^{\mathcal{T}-1}\mathbb{E}[\|e^{i}_{k,t}\|^{2}] \notag\\
    %     &\leq \frac{\mathcal{T}\mathbb{E}[W_{k}]}{1-2\alpha_{k}L^{2}\mathcal{T}^{2}} + \frac{\mathcal{T}\sigma^{2}\alpha^{2}_{k} + 2L^4\mathcal{T}^3\beta_{k}^2\mathbb{E}[\|\bar{x}_{k}-x_{\mathcal{H}}^{\star}\|^2]}{1-2\alpha^{2}_{k}L^{2}\mathcal{T}^{2}}\notag\\
    %     &\leq 2\mathcal{T}\mathbb{E}[W_{k}] + 2\mathcal{T}\sigma^{2}\alpha_{k} + 4L^4\mathcal{T}^3\beta_{k}^2\mathbb{E}[\|\bar{x}_{k}-x_{\mathcal{H}}^{\star}\|^2],
    %     \end{align*}
    % which concludes our proof. 

\end{proof}
\begin{lemma}\label{lem:xbar_k_k+1}
Let $\alpha_k$ satisfy \eqref{eq:alpha_choice_0}. Then we have
\begin{align}
    \mathbb{E}[\|\Bar{x}_{k+1} - \Bar{x}_{k}\|^{2}]&\leq 100L^{2}\mathcal{T}^{2}\beta^{2}_{k}\mathbb{E}[\|\bar{x}_{k}-x^{\star}_{\mathcal{H}}\|^{2}]  + 40\beta^{2}_{k}\mathcal{T}^{2}\mathbb{E}[W_{k}] \notag\\
    &\quad+ 32\mathcal{T}^{2}\sigma^{2}\alpha_{k}\beta^{2}_{k} + \frac{32\mathcal{T}^{2}\sigma^{2}f\alpha^{2}_{k}\beta^{2}_{k}}{|\mathcal{H}|}\cdot\label{lem:xbar_k_k+1:ineq}
\end{align}
\end{lemma}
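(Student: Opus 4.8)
The plan is to recognise $\bar{x}_{k+1}-\bar{x}_k$ as a perturbed average of the honest local drifts and then control everything through a single quantity. Combining the per-agent expression \eqref{eq:x_update_T} with the server recursion \eqref{eq:x_bar_update} and using $\nabla q_{\mathcal{H}}=\frac{1}{|\mathcal{H}|}\sum_{i\in\mathcal{H}}\nabla q^{i}$, the deterministic-gradient and gradient-error terms in \eqref{eq:x_bar_update} collapse into the honest average of the local increments, so that
\begin{align}
    \bar{x}_{k+1}-\bar{x}_k=\frac{1}{|\mathcal{H}|}\sum_{i\in\mathcal{H}}\big(x^{i}_{k,\mathcal{T}}-\bar{x}_k\big)+\mathcal{E}_{x},\notag
\end{align}
where $\mathcal{E}_{x}$ is the elimination discrepancy in \eqref{eq:V_x}. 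First I would apply $\|a+b\|^{2}\le 2\|a\|^{2}+2\|b\|^{2}$ together with Jensen's inequality, reducing the bound to controlling the single average $S_k:=\frac{1}{|\mathcal{H}|}\sum_{i\in\mathcal{H}}\|x^{i}_{k,\mathcal{T}}-\bar{x}_k\|^{2}$; the honest-average term is at most $2S_k$ by convexity of $\|\cdot\|^{2}$.

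The hard part is bounding $\|\mathcal{E}_{x}\|^{2}$, because $\mathcal{E}_{x}$ involves the adversarially chosen iterates $x^{i}_{k,\mathcal{T}}$ of the surviving Byzantine agents $i\in\mathcal{B}_k$. Here I would invoke the defining property of the comparative-elimination filter \eqref{alg:sort_distances}: after discarding the $f$ largest distances, every surviving index has distance to $\bar{x}_k$ no larger than that of any eliminated index, and the counts of surviving Byzantine and eliminated honest agents coincide, $|\mathcal{B}_k|=|\mathcal{H}\backslash\mathcal{H}_k|$. Pairing each surviving Byzantine index with a distinct eliminated honest index gives $\sum_{i\in\mathcal{B}_k}\|x^{i}_{k,\mathcal{T}}-\bar{x}_k\|\le\sum_{i\in\mathcal{H}\backslash\mathcal{H}_k}\|x^{i}_{k,\mathcal{T}}-\bar{x}_k\|$, whence $\|\mathcal{E}_{x}\|\le\frac{2}{|\mathcal{H}|}\sum_{i\in\mathcal{H}\backslash\mathcal{H}_k}\|x^{i}_{k,\mathcal{T}}-\bar{x}_k\|$. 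Squaring, applying Cauchy--Schwarz over the $|\mathcal{H}\backslash\mathcal{H}_k|\le f$ surviving terms, and then enlarging the sum to all of $\mathcal{H}$ yields $\|\mathcal{E}_{x}\|^{2}\le\frac{4f}{|\mathcal{H}|}S_k$. This is exactly where the factor $f/|\mathcal{H}|$ in \eqref{lem:xbar_k_k+1:ineq} originates, and it is the one step that genuinely uses the elimination rule rather than elementary inequalities.

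It then remains to bound $\mathbb{E}[S_k]$, which is mechanical. Applying Lemma \ref{lem:x_k_t_minus_x_bar_squared} with $t=\mathcal{T}-1$ and the crude estimates $(\mathcal{T}-1)^{2}\le\mathcal{T}^{2}$, $(\mathcal{T}-1)\le\mathcal{T}$, then averaging over $i\in\mathcal{H}$, gives $\mathbb{E}[S_k]\le 2L^{2}\mathcal{T}^{2}\beta_k^{2}\,\mathbb{E}[\|\bar{x}_k-x^{\star}_{\mathcal{H}}\|^{2}]+2\mathcal{T}\beta_k^{2}\,\frac{1}{|\mathcal{H}|}\sum_{i\in\mathcal{H}}\sum_{t=0}^{\mathcal{T}-1}\mathbb{E}[\|e^{i}_{k,t}\|^{2}]$. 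Substituting Lemma \ref{lem:e_k_t_V12} for the double sum and using $\alpha_k\le\frac{1}{2L\mathcal{T}}$ from \eqref{eq:alpha_choice_0} (hence $L^{4}\mathcal{T}^{4}\beta_k^{4}\le L^{2}\mathcal{T}^{2}\beta_k^{2}$) to absorb the higher-order term produces a clean estimate of the form $\mathbb{E}[S_k]\le cL^{2}\mathcal{T}^{2}\beta_k^{2}\,\mathbb{E}[\|\bar{x}_k-x^{\star}_{\mathcal{H}}\|^{2}]+c'\mathcal{T}^{2}\beta_k^{2}\,\mathbb{E}[W_k]+c''\mathcal{T}^{2}\sigma^{2}\alpha_k\beta_k^{2}$ for absolute constants $c,c',c''$. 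Finally, inserting $\mathbb{E}[\|\bar{x}_{k+1}-\bar{x}_k\|^{2}]\le\big(2+\frac{8f}{|\mathcal{H}|}\big)\mathbb{E}[S_k]$ and collecting the constants (using $f<|\mathcal{H}|$, so $2+8f/|\mathcal{H}|\le 10$) produces \eqref{lem:xbar_k_k+1:ineq}.
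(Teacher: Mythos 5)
Your proposal is correct in substance and takes essentially the same route as the paper: the same splitting of $\bar{x}_{k+1}-\bar{x}_{k}$ into the honest average drift plus the elimination discrepancy $\mathcal{E}_{x}$, the same pairing argument based on \eqref{alg:sort_distances} together with $|\mathcal{B}_{k}|=|\mathcal{H}\backslash\mathcal{H}_{k}|\leq f$, and the same two auxiliary results (Lemmas \ref{lem:x_k_t_minus_x_bar_squared} and \ref{lem:e_k_t_V12}); channeling everything through the single average $S_{k}$ via Jensen is only a cosmetic streamlining of the paper's bookkeeping, not a different argument. The one step you should repair is the final constant collection. By merging the honest contribution and the $\mathcal{E}_{x}$ contribution into the single factor $2+8f/|\mathcal{H}|\leq 10$, your noise term becomes $40\mathcal{T}^{2}\sigma^{2}\alpha_{k}\beta_{k}^{2}$, and this does \emph{not} imply the stated term $32\mathcal{T}^{2}\sigma^{2}\alpha_{k}\beta_{k}^{2}+32\mathcal{T}^{2}\sigma^{2}f\alpha_{k}^{2}\beta_{k}^{2}/|\mathcal{H}|$ in the typical regime where $4f\alpha_{k}<|\mathcal{H}|$, so as written you prove a slightly weaker inequality than \eqref{lem:xbar_k_k+1:ineq}. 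The fix is simply to keep the two contributions separate when collecting terms: the honest part contributes $2\cdot 4\mathcal{T}^{2}\sigma^{2}\alpha_{k}\beta_{k}^{2}=8\mathcal{T}^{2}\sigma^{2}\alpha_{k}\beta_{k}^{2}\leq 32\mathcal{T}^{2}\sigma^{2}\alpha_{k}\beta_{k}^{2}$, while the $\mathcal{E}_{x}$ part is the only one carrying the weight $8f/|\mathcal{H}|$ and contributes $32\mathcal{T}^{2}\sigma^{2}f\alpha_{k}\beta_{k}^{2}/|\mathcal{H}|$; the same separation reproduces the coefficients $100$ and $40$ on the first two terms exactly as you already have them. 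Note that even after this fix the $f$-weighted noise term carries $\alpha_{k}$ to the first power rather than the $\alpha_{k}^{2}$ appearing in \eqref{lem:xbar_k_k+1:ineq}; the extra power of $\alpha_{k}$ in the paper's intermediate bound \eqref{eq:E_x_last} is not actually produced by its own derivation, so your first-power accounting is the defensible one, and the difference is harmless downstream since $\beta_{k}\leq\alpha_{k}$ is invoked there anyway.
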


\begin{proof}
    By \eqref{eq:x_bar_update} and since $\nabla q_{\mathcal{H}}(x^{\star}_{\mathcal{H}})) = 0$ we have
\begin{align}
    &\|\Bar{x}_{k+1}-\Bar{x}_{k}\|^{2}\notag\\
    &= \Big\|\mathcal{E}_{k}-\mathcal{T}\beta_{k}\nabla q_{\mathcal{H}}(\Bar{x}_{k})-\frac{\beta_{k}}{|\mathcal{H}|}\sum_{i\in \mathcal{H}}\sum_{l=0}^{\mathcal{T}-1}e^{i}_{k,l}\Big\|^{2}\notag\\
    & \leq 2\|\mathcal{E}_{x}\|^{2} + 2\Big\|\mathcal{T}\beta_{k}(\nabla q_{\mathcal{H}}(\Bar{x}_{k})+\frac{\beta_{k}}{|\mathcal{H}|}\sum_{i\in \mathcal{H}}\sum_{l=0}^{\mathcal{T}-1}e^{i}_{k,l}\Big\|^{2}\notag\\
    & \leq 2\|\mathcal{E}_{x}\|^{2} + 4\mathcal{T}^2\beta_k^2\|\nabla q_{\mathcal{H}}(\Bar{x}_{k})\|^2  + \frac{4\beta^{2}_{k}\mathcal{T}}{|\mathcal{H}|}\sum_{i \in \mathcal{H}}\sum_{l=0}^{\mathcal{T}-1}\|e^{i}_{k,l}\|^{2}\notag\\
    & = 2\|\mathcal{E}_{x}\|^{2} + 4\mathcal{T}^2\beta_k^2\|\nabla q_{\mathcal{H}}(\Bar{x}_{k})-\nabla q_{\mathcal{H}}(x^{\star}_{\mathcal{H}}))\|^2\notag\\  
    &\qquad + \frac{4\beta^{2}_{k}\mathcal{T}}{|\mathcal{H}|}\sum_{i \in \mathcal{H}}\sum_{l=0}^{\mathcal{T}-1}\|e^{i}_{k,l}\|^{2}\notag\\
    &\leq 2\|\mathcal{E}_{x}\|^{2} + 4L^{2}\mathcal{T}^{2}\beta^{2}_{k}\|\Bar{x}_{k} - x^{\star}_{\mathcal{H}}\|^{2} + \frac{4\beta^{2}_{k}\mathcal{T}}{|\mathcal{H}|}\sum_{i \in \mathcal{H}}\sum_{l=0}^{\mathcal{T}-1}\|e^{i}_{k,l}\|^{2}.\notag
\end{align}
Taking the expectation of the preceding relation and using \eqref{lem:e_k_t_V12:ineq} give
\begin{align}\label{eq:x_bar_diff_0}
    &\mathbb{E}[\|\bar{x}_{k+1}-\bar{x}_{k}\|^{2}]\notag\\
    &\leq 2\mathbb{E}[\|\mathcal{E}_{k}\|^{2}] + \big(4L^{2}\mathcal{T}^{2}\beta^{2}_{k} + 16L^{4}\mathcal{T}^{4}\beta^{4}_{k}\big)\mathbb{E}[\|\bar{x}_{k}-x^{\star}_{\mathcal{H}}\|^{2}]\notag\\
    &\quad + 8\mathcal{T}^{2}\beta^{2}_{k}\mathbb{E}[W_{k}].
\end{align}
Next, we analyze the term $\|\mathcal{E}_{x}\|^{2}$. For this using \eqref{eq:V_x}, we have 
     \begin{align*}
         \|\mathcal{E}_{x}\|^{2} = \Big\|\frac{1}{|\mathcal{H}|}\Big[\sum_{i\in \mathcal{B}_{k}}(x^{i}_{k,\mathcal{T}} -\Bar{x}_{k}) - \sum_{i\in \mathcal{H} \backslash \mathcal{H}_{k}}(x^{i}_{k,\mathcal{T}}-\Bar{x}_{k})\Big]\Big\|^{2}.
     \end{align*}
     By \eqref{alg:sort_distances}, we have $\|x^{i}_{k,\mathcal{T}}-\Bar{x}_{k}\| \leq \|x^{j}_{k,\mathcal{T}}-\Bar{x}_{k}\|$ for all $i \in \mathcal{B}_{k}$ and $j \in \mathcal{H}\backslash \mathcal{H}_{k}$. Thus, we obtain
     %using Cauchy-Schwarz inequality we obtain 
\begin{align} %\label{eq:V_x_update_0}
    \|\mathcal{E}_{x}\|^{2}&\leq \frac{2|\mathcal{B}_{k}|}{|\mathcal{H}|^{2}}\sum_{i \in \mathcal{B}_{k}}\|x^{i}_{k,\mathcal{T}} - \Bar{x}_{k}\|^{2} + \frac{2|\mathcal{B}_{k}|}{|\mathcal{H}|^{2}}\sum_{i \in \mathcal{H}\backslash\mathcal{H}_{k}}\|x^{i}_{k,\mathcal{T}} - \Bar{x}_{k}\|^{2} \notag\\
    &\leq \frac{4|\mathcal{B}_{k}|}{|\mathcal{H}|^{2}}\sum_{i \in \mathcal{H}\backslash\mathcal{H}_{k}}\|x^{i}_{k,\mathcal{T}} - \Bar{x}_{k}\|^{2},\notag
\end{align}
which by \eqref{lem:x_k_t_minus_x_bar_squared:ineq} yields
\begin{align}
     &\|\mathcal{E}_{x}\|^{2}\notag\\
     % &\leq \frac{4|\mathcal{B}_{k}|}{|\mathcal{H}|^{2}}\sum_{i \in \mathcal{H}\backslash\mathcal{H}_{k}}\|x^{i}_{k,\mathcal{T}}-\bar{x}_{k}\|^{2}\notag\\
     & \leq  \frac{4|\mathcal{B}_{k}|}{|\mathcal{H}|^{2}}\sum_{i \in \mathcal{H}\backslash\mathcal{H}_{k}}(2L^{2}\mathcal{T}^{2}\beta_{k}^{2}\|\Bar{x}_{k}-x^{\star}_{\mathcal{H}}\|^{2}+ 2\mathcal{T}\beta_{k}^{2}\sum_{l=0}^{\mathcal{T}-1}\|e^{i}_{k,l}\|^{2})\notag\\
    & \leq \frac{8L^{2}\mathcal{T}^{2}\beta_{k}^{2}|\mathcal{B}_{k}|^{2}}{|\mathcal{H}|^{2}}\|\Bar{x}_{k}-x^{\star}_{\mathcal{H}}\|^{2} + \frac{8\mathcal{T}\beta_{k}^{2}|\mathcal{B}_{k}|}{|\mathcal{H}|^{2}}\sum_{i \in \mathcal{H}\backslash\mathcal{H}_{k}}\sum_{l=0}^{\mathcal{T}-1}\|e^{i}_{k,l}\|^{2}.\notag
\end{align}
Using \eqref{lem:e_k_t_V12:ineq} we obtain from the preceding relation
\begin{align}\label{eq:E_x_last}
    &\mathbb{E}[\|\mathcal{E}_{x}\|^{2}]\notag\\
    &\leq \big(\frac{8L^{2}\mathcal{T}^{2}\beta^{2}_{k}|\mathcal{B}_{k}|^{2}}{|\mathcal{H}|^{2}} + \frac{32\alpha_{k}\beta^{4}_{k}L^{4}\mathcal{T}^{4}|\mathcal{B}_{k}|}{|\mathcal{H}|}\big)\mathbb{E}[\|\bar{x}_{k}-x^{\star}_{\mathcal{H}}\|^{2}]\notag\\
     &\quad +\frac{16\beta^{2}_{k}\mathcal{T}^{2}|\mathcal{B}_{k}|}{|\mathcal{H}|}\mathbb{E}[W_{k}] + \frac{16\alpha^{2}_{k}\beta^{2}_{k}\mathcal{T}^{2}|\mathcal{B}_{k}|\sigma^{2}}{|\mathcal{H}|},
\end{align}
which when using \eqref{eq:x_bar_diff_0} and  $\beta_{k}L\mathcal{T} \leq 1$ gives \eqref{lem:xbar_k_k+1:ineq}, i.e., 
\begin{align*}
    &\mathbb{E}[\|\bar{x}_{k+1}-\bar{x}_{k}\|^{2}]\notag\\
    &\leq \big(\frac{16L^{2}\mathcal{T}^{2}\beta^{2}_{k}|\mathcal{B}_{k}|^{2}}{|\mathcal{H}|^{2}} + \frac{64L^{2}\mathcal{T}^{2}\beta^{2}_{k}|\mathcal{B}_{k}|}{|\mathcal{H}|}\big)\mathbb{E}[\|\bar{x}_{k}-x^{\star}_{\mathcal{H}}\|^{2}]\notag\\
    &\quad + \big(4L^{2}\mathcal{T}^{2}\beta^{2}_{k} + 16L^{4}\mathcal{T}^{4}\beta^{4}_{k}\big)\mathbb{E}[\|\bar{x}_{k}-x^{\star}_{\mathcal{H}}\|^{2}]\notag\\
    &\quad+\big(8\mathcal{T}^{2}\beta^{2}_{k} + \frac{32\mathcal{T}^{2}\beta^{2}_{k}|\mathcal{B}_{k}|}{|\mathcal{H}|}\big)\mathbb{E}[W_{k}]\notag\\
    &\quad + 32\mathcal{T}^{2}\sigma^{2}\alpha_{k}\beta^{2}_{k} + \frac{32\mathcal{T}^{2}\sigma^{2}|\mathcal{B}_{k}|\alpha^{2}_{k}\beta^{2}_{k}}{|\mathcal{H}|}
    \notag\\
    &\leq 100L^{2}\mathcal{T}^{2}\beta^{2}_{k}\mathbb{E}[\|\bar{x}_{k}-x^{\star}_{\mathcal{H}}\|^{2}]  + 40\beta^{2}_{k}\mathcal{T}^{2}\mathbb{E}[W_{k}] \notag\\
    &\quad+ 32\mathcal{T}^{2}\sigma^{2}\alpha_{k}\beta^{2}_{k} + \frac{32\mathcal{T}^{2}\sigma^{2}f\alpha^{2}_{k}\beta^{2}_{k}}{|\mathcal{H}|},
\end{align*}
where the last inequality we use $|\mathcal{B}_{k}| \leq f$, $\frac{|\mathcal{B}_{k}|}{|\mathcal{H}|} \leq \frac{f}{N-f} \leq 1.$

%This concludes our proof.
\end{proof}
\subsection*{Proof of Lemma \ref{lem:Lemma_V2_SC}}
From \eqref{notation:e_i} using $y^{i}_{k+1,0} = y^{i}_{k,\mathcal{T}}$ we have
\begin{align*}
    e^{i}_{k+1,0} 
    %&= y^{i}_{k+1,0} - \nabla q^{i}(\Bar{x}_{k+1})\notag\\
    & = y^{i}_{k,\mathcal{T}}- \nabla q^{i}(\Bar{x}_{k+1})
    % \notag\\& 
    = e^{i}_{k,\mathcal{T}} + \nabla q^{i}(\Bar{x}_{k})-\nabla q^{i}(\Bar{x}_{k+1}).
\end{align*}
Using the Cauchy-Schwarz inequality and Assumption \ref{eq:Assumption_Lipschitz} we obtain
\begin{align*}
    \|e^{i}_{k+1,0}\|^{2} 
    % & \leq \big(1+\frac{\alpha_{k}}{2}\big)\|e^{i}_{k,\mathcal{T}}\|^{2} + \big(1+\frac{2}{\alpha_{k}}\big)\|\nabla q^{i}(\Bar{x}_{k}) - \nabla q^{i}(\Bar{x}_{k+1})\|^{2}\notag\\
    &\leq \big(1+\frac{\alpha_{2}}{2}\big)\|e^{i}_{k,\mathcal{T}}\|^{2} + \big(1+\frac{2}{\alpha_{k}}\big)L^{2}\|\Bar{x}_{k+1}-\Bar{x}_{k}\|^{2}\notag\\
    &\leq \big(1+\frac{\alpha_{2}}{2}\big)\|e^{i}_{k,\mathcal{T}}\|^{2} + \frac{3L^{2}}{\alpha_{k}}\|\Bar{x}_{k+1}-\Bar{x}_{k}\|^{2}. 
\end{align*}
Thus, we have
\begin{align}\label{eq:V_2_r0}
    &\mathbb{E}[W_{k+1}] = \frac{1}{|\mathcal{H}|}\sum_{i \in \mathcal{H}}\mathbb{E}[\|e^{i}_{k+1,0}\|^{2}]\notag\\
    &\leq \big(1+\frac{\alpha_{k}}{2}\big)\frac{1}{|\mathcal{H}|}\sum_{i \in \mathcal{H}}\mathbb{E}[\|e^{i}_{k,\mathcal{T}}\|^{2}] + \frac{3L^{2}}{\alpha_{k}}\mathbb{E}[\|\Bar{x}_{k+1} - \Bar{x}_{k}\|^{2}].
\end{align}
\vspace{-1cm}By \eqref{lem:e_k_t:eq}, we consider
\begin{align*}
\mathbb{E}[\|e^{i}_{k,\mathcal{T}}\|^{2}]&\leq (1-\alpha_{k})^{\mathcal{T}}\mathbb{E}[\|e^{i}_{k,0}\|^{2}] + \alpha_{k}^2\sigma^{2}\sum_{t=0}^{\mathcal{T}-1}(1-\alpha_{k})^{\mathcal{T}-1-t}\notag\\
&\quad  + \alpha_{k}L^{2}\sum_{t = 0}^{\mathcal{T}-1}(1-\alpha_{k})^{\mathcal{T}-t-1}\mathbb{E}[\|x^{i}_{k,t}-\bar{x}_{k}\|^{2}]\notag\\
    &\leq (1-\alpha_{k})\mathbb{E}[\|e^{i}_{k,0}\|^{2}] + \sigma^{2}\mathcal{T}\alpha^{2}_{k} \notag\\
        &\quad  + \alpha_{k}L^{2}\sum_{t = 0}^{\mathcal{T}-1}\mathbb{E}[\|x^{i}_{k,t}-\bar{x}_{k}\|^{2}]\notag\\
        &\leq (1-\alpha_{k})\mathbb{E}[\|e^{i}_{k,0}\|^{2}]  + L^{4}T^3\alpha_{k}\beta_{k}^{2}\mathbb{E}[\|\Bar{x}_{k}-x^{\star}_{\mathcal{H}}\|^{2}\notag\\
        &\quad   + 2L^2\mathcal{T}\alpha_{k}\beta_{k}^{2}\sum_{t=0}^{\mathcal{T}}\|e^{i}_{k,t}\|^{2}]+ \sigma^{2}\mathcal{T}\alpha^{2}_{k},
\end{align*}
where the last inequality is due to \eqref{lem:x_k_t_minus_x_bar_squared:ineq}. Using this equation and $\big(1+\frac{\alpha_{k}}{2}\big) \leq 3/2$ we obtain
\begin{align*}
    &\big(1+\frac{\alpha_{k}}{2}\big)\frac{1}{|\mathcal{H}|}\sum_{i \in \mathcal{H}}\mathbb{E}[\|e^{i}_{k,\mathcal{T}}\|^{2}]\notag\\
    &\leq\big(1+\frac{\alpha_{k}}{2}\big)\frac{1}{|\mathcal{H}|}\sum_{i \in \mathcal{H}}(1-\alpha_{k})\mathbb{E}[\|e^{i}_{k,0}\|^{2}] + 2\sigma^{2}\mathcal{T}\alpha^{2}_{k}\notag\\ 
    &\quad + 2  L^{4}T^3\alpha_{k}\beta_{k}^{2}\mathbb{E}[\|\Bar{x}_{k}-x^{\star}_{\mathcal{H}}\|^{2}]\notag\\
    &\quad   + 3L^2\mathcal{T}\alpha_{k}\beta_{k}^{2}\frac{1}{|\mathcal{H}|}\sum_{i \in \mathcal{H}}\sum_{t=0}^{\mathcal{T}}\mathbb{E}[\|e^{i}_{k,t}\|^{2}]\allowdisplaybreaks\notag\\
    &\leq\big(1-\frac{\alpha_{k}}{2}\big)\mathbb{E}[W_{k}] + 2\sigma^{2}\mathcal{T}\alpha^{2}_{k}\notag\\ 
    &\quad + 2  L^{4}T^3\alpha_{k}\beta_{k}^{2}\mathbb{E}[\|\Bar{x}_{k}-x^{\star}_{\mathcal{H}}\|^{2}]\notag\\
    &\quad   + 3L^2\mathcal{T}\alpha_{k}\beta_{k}^{2}\frac{1}{|\mathcal{H}|}\sum_{i \in \mathcal{H}}\sum_{t=0}^{\mathcal{T}}\mathbb{E}[\|e^{i}_{k,t}\|^{2}],
\end{align*}
% \vspace{-1cm}
where the last inequality we use 
\begin{align*}
   (1-\alpha_{k})\big(1+\frac{\alpha_{k}}{2}\big) \leq 1-\frac{\alpha_{k}}{2}\cdot
\end{align*}
Thus, substitute the relation above into \eqref{eq:V_2_r0} we have
\begin{align} %\label{eq:V_2_r0}
    \mathbb{E}[W_{k+1}] &\leq \big(1-\frac{\alpha_{k}}{2}\big)\mathbb{E}[W_{k}] + 2\sigma^{2}\mathcal{T}\alpha^{2}_{k}\notag\\ 
    &\quad + 2  L^{4}\mathcal{T}^3\alpha_{k}\beta_{k}^{2}\mathbb{E}[\|\Bar{x}_{k}-x^{*}_{\mathcal{H}}\|^{2}]\notag\\
    &\quad   + 3L^2\mathcal{T}\alpha_{k}\beta_{k}^{2}\frac{1}{|\mathcal{H}|}\sum_{i \in \mathcal{H}}\sum_{t=0}^{\mathcal{T}}\mathbb{E}[\|e^{i}_{k,t}\|^{2}]\notag\\ 
    &\quad + \frac{3L^{2}}{\alpha_{k}}\mathbb{E}[\|\Bar{x}_{k+1} - \Bar{x}_{k}\|^{2}]\notag\\
    &\leq \big(1-\frac{\alpha_{k}}{2}\big)\mathbb{E}[W_{k}] + 2\sigma^{2}\mathcal{T}\alpha^{2}_{k} \notag\\
    &\quad + 2  L^{4}\mathcal{T}^3\alpha_{k}\beta_{k}^{2}\mathbb{E}[\|\Bar{x}_{k}-x^{*}_{\mathcal{H}}\|^{2}]\notag\\
    &\quad   + 6L^2\mathcal{T}^2\alpha_{k}\beta_{k}^{2}\mathbb{E}[W_{k}] + 6L^2\mathcal{T}^2\sigma^{2}\alpha_{k}^2\beta_{k}^{2}\notag\\ 
    &\quad   + 12L^6\mathcal{T}^4\alpha_{k}\beta_{k}^{4}\mathbb{E}[\|\bar{x}_{k}-x_{\mathcal{H}}^{\star}\|^2]\notag\\ 
    &\quad + \frac{3L^{2}}{\alpha_{k}}\mathbb{E}[\|\Bar{x}_{k+1} - \Bar{x}_{k}\|^{2}]\notag\\
    &= \big(1-\frac{\alpha_{k}}{2}+ 6L^2\mathcal{T}^2\alpha_{k}\beta_{k}^{2}\big)\mathbb{E}[W_{k}]\notag\\ 
    &\quad    + 2\sigma^{2}\mathcal{T}\alpha^{2}_{k} + 6L^2\mathcal{T}^2\sigma^{2}\alpha_{k}^2\beta_{k}^{2}\notag\\ 
    &\quad  + \big(2  L^{4}\mathcal{T}^3\alpha_{k}\beta_{k}^{2} + 12L^6\mathcal{T}^4\alpha_{k}\beta_{k}^{4}\big)\mathbb{E}[\|\bar{x}_{k}-x_{\mathcal{H}}^{\star}\|^2]\notag\\ 
    &\quad + \frac{3L^{2}}{\alpha_{k}}\mathbb{E}[\|\Bar{x}_{k+1} - \Bar{x}_{k}\|^{2}],\notag
\end{align}
where we use \eqref{lem:e_k_t_V12:ineq} to obtain the second inequality.  
Next, applying Lemma \ref{lem:xbar_k_k+1}, the above inequality becomes
\begin{align}
    \mathbb{E}[W_{k+1}] &\leq \big(1-\frac{\alpha_{k}}{2}\big)\mathbb{E}[W_{k}]\notag\\
    &\quad +\big(6L^{2}\mathcal{T}^{2}\alpha_{k}\beta^{2}_{k} + \frac{120L^{2}\mathcal{T}^{2}\beta^{2}_{k}}{\alpha_{k}}\big)\mathbb{E}[W_{k}]\notag\\
    &\quad +\big(2L^{4}\mathcal{T}^{3}\alpha_{k}\beta^{2}_{k} + 12L^{6}\mathcal{T}^{4}\alpha_{k}\beta^{4}_{k} \big)\mathbb{E}[\|\bar{x}_{k}-x^{\star}_{\mathcal{H}}\|^{2}]\notag\\
    &\quad + \frac{300L^{4}\mathcal{T}^{2}\beta^{2}_{k}}{\alpha_{k}}\mathbb{E}[\|\bar{x}_{k}-x^{\star}_{\mathcal{H}}\|^{2}]+ 2\sigma^{2}\mathcal{T}\alpha^{2}_{k}\notag\\
    &\quad + 96L^{2}\mathcal{T}^{2}\sigma^{2}\beta^{2}_{k}+ 6L^{2}\mathcal{T}^{2}\sigma^{2}\alpha^{2}_{k}\beta^{2}_{k} \notag\\
    &\quad + \frac{96L^{2}\mathcal{T}^{2}\sigma^{2}f\alpha_{k}\beta^{2}_{k}}{|\mathcal{H}|}.\notag
\end{align}
Using $\beta_k \leq \alpha_{k}$ and $\beta_{k}L\mathcal{T} \leq 1$  we obtain
% \vspace{-0.5cm}
\begin{align}
     \mathbb{E}[W_{k+1}] &\leq \big(1-\frac{\alpha_{k}}{2}\big)\mathbb{E}[W_{k}]\notag\\
    &\quad + \big(6L\mathcal{T}\beta_{k} + 120L^{2}\mathcal{T}^{2}\beta_{k}\big)\mathbb{E}[W_{k}]\notag\\
    &\quad +\big(14L^{3}\mathcal{T}^{2}\alpha_{k}\beta_{k} + \frac{300L^{4}\mathcal{T}^{2}\beta^{2}_{k}}{\alpha_{k}} \big)\mathbb{E}[\|\bar{x}_{k}-x^{\star}_{\mathcal{H}}\|^{2}]\notag\\
    &\quad+ 2\sigma^{2}\mathcal{T}\alpha^{2}_{k} + 96L^{2}\mathcal{T}^{2}\sigma^{2}\beta^{2}_{k}\notag\\
    &\quad+ 6L^{2}\mathcal{T}^{2}\sigma^{2}\alpha^{2}_{k}\beta^{2}_{k} + \frac{96L^{2}\mathcal{T}^{2}\sigma^{2}f\alpha_{k}\beta^{2}_{k}}{|\mathcal{H}|}\allowdisplaybreaks\notag\\ 
    &\leq \big(1-\frac{\alpha_{k}}{2} + 126(L+1)^{2}\mathcal{T}^{2}\beta_{k}\big)\mathbb{E}[W_{k}]\notag\\
    &\quad +\big(14L^{3}\mathcal{T}^{2}\alpha_{k}\beta_{k} + \frac{300L^{4}\mathcal{T}^{2}\beta^{2}_{k}}{\alpha_{k}} \big)\mathbb{E}[\|\bar{x}_{k}-x^{\star}_{\mathcal{H}}\|^{2}]\notag\\
     &\quad+ 2\sigma^{2}\mathcal{T}\alpha^{2}_{k} + 96L^{2}\mathcal{T}^{2}\sigma^{2}\beta^{2}_{k}\notag\\
    &\quad+ 6L^{2}\mathcal{T}^{2}\sigma^{2}\alpha^{2}_{k}\beta^{2}_{k} + \frac{96L^{2}\mathcal{T}^{2}\sigma^{2}f\alpha_{k}\beta^{2}_{k}}{|\mathcal{H}|}\notag\cdot
\end{align}

\subsection*{Proof of Lemma \ref{lem:Lemma_V1_SC}}
    % Using the update of $\bar{x}_{k}$ from \eqref{eq:x_bar_update} we have
    % \begin{align}%\label{eq:x_bar_update}
    %     &\Bar{x}_{k+1} -x^{*}_{\mathcal{H}} \notag\\
    %     &= \Bar{x}_{k}-x^{*}_{\mathcal{H}}-\mathcal{T}\beta_{k}\nabla q_{\mathcal{H}}(\Bar{x}_{k})-\frac{\beta_{k}}{|\mathcal{H}|}\sum_{i\in \mathcal{H}}\sum_{l=0}^{\mathcal{T}-1}e^{i}_{k,l}+ \mathcal{E}_{k},\notag
    % \end{align}
    Using \eqref{eq:x_bar_update} we have
    \begin{align}\label{eq:x_bar_squared_update}
        &\|\Bar{x}_{k+1} -x^{*}_{\mathcal{H}}\|^{2}\notag \\
        &= \|\bar{x}_{k}-x^{*}_{\mathcal{H}} - \mathcal{T}\beta_{k}\nabla q_{\mathcal{H}}(\bar{x}_{k})\|^{2} + \Big\|\mathcal{E}_{x} -\frac{\beta_{k}}{|\mathcal{H}|}\sum_{i \in \mathcal{H}}\sum_{l=0}^{\mathcal{T}-1}e^{i}_{k,l}\Big\|^{2} \notag\\
        &\quad -2\big(\bar{x}_{k}-x^{*}_{\mathcal{H}} - \mathcal{T}\beta_{k}\nabla q_{\mathcal{H}}(\bar{x}_{k})\big)^{T}\big(\mathcal{E}_{x} -\frac{\beta_{k}}{|\mathcal{H}|}\sum_{i \in \mathcal{H}}\sum_{l=0}^{\mathcal{T}-1}e^{i}_{k,l}\big)\notag\\
      &= P_{1} + P_{2} + P_{3} ,
    \end{align}
where $P_i$, for $i = 1, 2, 3$, are defined in that order. Firstly, using $\nabla q_{\mathcal{H}}(x^{\star}_{\mathcal{H}}) = 0$ along with Assumption \ref{Assum:Assumption_strong_convexity} and \ref{eq:Assumption_Lipschitz} we analyze the term $P_{1}$ as

    \begin{align}\label{eq:P1}
        &\|\bar{x}_{k}-x^{\star}_{\mathcal{H}} - \mathcal{T}\beta_{k}\nabla q_{\mathcal{H}}(\bar{x}_{k})\|^{2}\notag\\
        &= \|\bar{x}_{k}-x^{\star}_{\mathcal{H}}\|^{2}-2\mathcal{T}\beta_{k}\nabla q_{\mathcal{H}}(\bar{x}_{k})^{T}(\bar{x}_{k}-x^{\star}_{\mathcal{H}}) \notag\\
        &\quad+ \mathcal{T}^{2}\beta^{2}_{K}\|\nabla q_{\mathcal{H}}(\bar{x}_{k})\|^{2}\notag\\
         &= \|\bar{x}_{k}-x^{\star}_{\mathcal{H}}\|^{2}-2\mathcal{T}\beta_{k}(\nabla q_{\mathcal{H}}(\bar{x}_{k})-\nabla q_{\mathcal{H}}(x^{\star}_{\mathcal{H}}))^{T}(\bar{x}_{k}-x^{\star}_{\mathcal{H}}) \notag\\
        &\quad+ \mathcal{T}^{2}\beta^{2}_{K}\|\nabla q_{\mathcal{H}}(\bar{x}_{k})-\nabla q_{\mathcal{H}}(x^{\star}_{\mathcal{H}})\|^{2}\notag\\
        &\leq (1-2\mu\mathcal{T}\beta_{k} + L^{2}\mathcal{T}^{2}\beta^{2}_{k})\|\bar{x}_{k}-x^{\star}_{\mathcal{H}}\|^{2}
    \end{align}

    Secondly, using Cauchy-Schwarz inequality term $P_2$ can be expressed as
    \begin{align}%\label{eq:x_bar_squared_update_Term_2_v0}
        &\Big\|\mathcal{E}_{x} -\frac{\beta_{k}}{|\mathcal{H}|}\sum_{i \in \mathcal{H}}\sum_{l=0}^{\mathcal{T}-1}e^{i}_{k,l}\Big\|^{2}\notag\\
        &\leq 2\|\mathcal{E}_{x}\|^{2}+ 2\beta^{2}_{k}\mathcal{T}\big(\frac{1}{|\mathcal{H}|}\sum_{i \in \mathcal{H}}\sum_{l=0}^{\mathcal{T}-1}\|e^{i}_{k,l}\|^{2}\big).\notag
    \end{align}

Taking the expectation on both sides of the above inequality, and applying Lemma \ref{lem:e_k_t_V12} along with \eqref{eq:E_x_last} from Lemma \ref{lem:xbar_k_k+1} with $\alpha_{k} \leq 1$ , we obtain the following

\begin{align}\label{eq:P2}
    &\mathbb{E}\Bigg[\Big\|\mathcal{E}_{x} -\frac{\beta_{k}}{|\mathcal{H}|}\sum_{i \in \mathcal{H}}\sum_{l=0}^{\mathcal{T}-1}e^{i}_{k,l}\Big\|^{2}\Bigg]\notag\\
   & \leq \big(\frac{16L^{2}\mathcal{T}^{2}\beta^{2}_{k}|\mathcal{B}_{k}|}{|\mathcal{H}|} + \frac{64L^{3}\mathcal{T}^{3}\beta^{3}_{k}|\mathcal{B}_{k}|}{|\mathcal{H}|} \big)\mathbb{E}[\|\bar{x}_{k} -x^{\star}_{\mathcal{H}}\|^{2}]\notag\\
   &\quad + 8L^{4}\mathcal{T}^{4}\beta^{4}_{k}\mathbb{E}[\|\bar{x}_{k} -x^{\star}_{\mathcal{H}}\|^{2}] \notag\\
   &\quad +\big(\frac{32\mathcal{T}^{2}\beta^{2}_{k}|\mathcal{B}_{k}|}{|\mathcal{H}|} + 4\mathcal{T}^{2}\beta^{2}_{k}\big)\mathbb{E}[W_{k}]\notag\\
   &\quad + 4\mathcal{T}^{2}\sigma^{2}\alpha_{k}\beta^{2}_{k} + \frac{32\mathcal{T}^{2}\sigma^{2}f\alpha^{2}_{k}\beta^{2}_{k}}{|\mathcal{H}|}\notag\\
   &\leq 88L^{2}\mathcal{T}^{2}\beta^{2}_{k}\mathbb{E}[\|\bar{x}_{k} -x^{\star}_{\mathcal{H}}\|^{2}] + \frac{36\mathcal{T}\beta_{k}}{\mu}\mathbb{E}[W_{k}]\notag\\
   &\quad + 4\mathcal{T}^{2}\sigma^{2}\alpha_{k}\beta^{2}_{k} + \frac{32\mathcal{T}^{2}\sigma^{2}f\alpha^{2}_{k}\beta^{2}_{k}}{|\mathcal{H}|},
\end{align}
where the last inequality is obtained using $\beta_{k}L\mathcal{T} \leq 1$ and $\mu \leq L$.
Thirdly we express term $P_3$ from \eqref{eq:x_bar_squared_update} as  
\begin{align}\label{eq:P3_v0}
    &-2\big(\bar{x}_{k}-x^{\star}_{\mathcal{H}} - \mathcal{T}\beta_{k}\nabla q_{\mathcal{H}}(\bar{x}_{k})\big)^{T}\big(\mathcal{E}_{x} -\frac{\beta_{k}}{|\mathcal{H}|}\sum_{i \in \mathcal{H}}\sum_{l=0}^{\mathcal{T}-1}e^{i}_{k,l}\big)\notag\\
    &= -2\big(\bar{x}_{k}-x^{\star}_{\mathcal{H}} - \mathcal{T}\beta_{k}\nabla q_{\mathcal{H}}(\bar{x}_{k})\big)^{T}\mathcal{E}_{x}\notag\\
    &\quad +2\big(\bar{x}_{k}-x^{\star}_{\mathcal{H}} - \mathcal{T}\beta_{k}\nabla q_{\mathcal{H}}(\bar{x}_{k})\big)^{T}\big(\frac{\beta_{k}}{|\mathcal{H}|}\sum_{i \in \mathcal{H}}\sum_{l=0}^{\mathcal{T}-1}e^{i}_{k,l}\big)\notag\\
    &= P_{3a} + P_{3b},
\end{align}
where $P_{3a}$ and $ P_{3b}$ are defined in that order. 
Next, we apply the Cauchy-Schwarz inequality $2a^{T}b \leq \eta\|a\|^{2} + \frac{\|b\|^{2}}{\eta}$ for any $\eta >0$, and use Assumptions \ref{Assum:Assumption_strong_convexity} and \ref{eq:Assumption_Lipschitz} to analyze term $P_{3a}$. Thus, we have
\begin{align}
    &-2\big(\bar{x}_{k}-x^{\star}_{\mathcal{H}} - \mathcal{T}\beta_{k}\nabla q_{\mathcal{H}}(\bar{x}_{k})\big)^{T}\mathcal{E}_{x}\notag\\
    & \leq \frac{3L\mathcal{T}\beta_{k}|\mathcal{B}_{k}|}{|\mathcal{H}|} \|\bar{x}_{k}-x^{\star}_{\mathcal{H}} - \mathcal{T}\beta_{k}\nabla q_{\mathcal{H}}(\bar{x}_{k})\|^{2} + \frac{|\mathcal{H}|}{3L\mathcal{T}\beta_{k}|\mathcal{B}_{k}|}\|\mathcal{E}_{x}\|^{2}\notag\\
    \notag\\
    & \leq \frac{3L\mathcal{T}\beta_{k}|\mathcal{B}_{k}|}{|\mathcal{H}|} (\|\bar{x}_{k}-x^{\star}_{\mathcal{H}}\|^{2}+L^{2}\mathcal{T}^{2}\beta_{k}^{2}\|\bar{x}_{k}-x^{\star}_{\mathcal{H}}\|^{2}) \notag\\
    &\quad+ \frac{|\mathcal{H}|}{3L\mathcal{T}\beta_{k}|\mathcal{B}_{k}|}\|\mathcal{E}_{x}\|^{2}\notag.
\end{align}
 Next taking expectation on both sides of the above inequality and using \eqref{eq:E_x_last} from Lemma \ref{lem:xbar_k_k+1} along with $\alpha_{k} \leq 1$, we have
\begin{align}\label{eq:P_3a}
     &-2\mathbb{E}\Big[\big(\bar{x}_{k}-x^{\star}_{\mathcal{H}} - \mathcal{T}\beta_{k}\nabla q_{\mathcal{H}}(\bar{x}_{k})\big)^{T}\mathcal{E}_{x}\Big]\notag\\
     &\leq\big(\frac{17L\mathcal{T}\beta_{k}|\mathcal{B}_{k}|}{3|\mathcal{H}|}+\frac{32L^{2}\mathcal{T}^{2}\beta_{k}^{2}}{3}+\frac{3L^{3}\mathcal{T}^{3}\beta_{k}^{3}|\mathcal{B}_{k}|}{|\mathcal{H}|}\big)\mathbb{E}[\|\bar{x}_{k}-x^{\star}_{\mathcal{H}}\|^{2}] \notag\\
    &\quad + \frac{16\beta_{k}\mathcal{T}}{3L}\mathbb{E}[W_{k}] + \frac{16\mathcal{T}\sigma^{2}\alpha^{2}_{k}\beta_{k}}{3L}\notag\\
    &\leq \big(\frac{17L\mathcal{T}\beta_{k}|\mathcal{B}_{k}|}{3|\mathcal{H}|}+ 14L^{2}\mathcal{T}^{2}\beta^{2}_{k}\big)\mathbb{E}[\|\bar{x}_{k}-x^{\star}_{\mathcal{H}}\|^{2}] \notag\\
    &\quad + \frac{16\mathcal{T}\beta_{k}}{3\mu}\mathbb{E}[W_{k}] + \frac{16\mathcal{T}\sigma^{2}\alpha^{2}_{k}\beta_{k}}{3\mu},
\end{align}
where the last inequality is obtained using $\beta_{k}L\mathcal{T} \leq 1$, $\frac{|\mathcal{B}_{k}|}{|\mathcal{H}|} \leq 1$ and $\mu \leq L$. To analyze the term $P_{3b}$ from \eqref{eq:P3_v0}, we use the Cauchy-Schwarz along with Assumption \ref{Assum:Assumption_strong_convexity} and \ref{eq:Assumption_Lipschitz} to obtain
\begin{align}
    &2\big(\bar{x}_{k}-x^{\star}_{\mathcal{H}} - \mathcal{T}\beta_{k}\nabla q_{\mathcal{H}}(\bar{x}_{k})\big)^{T}\big(\frac{\beta_{k}}{|\mathcal{H}|}\sum_{i \in \mathcal{H}}\sum_{l=0}^{\mathcal{T}-1}e^{i}_{k,l}\big)\notag\\
    &\leq \frac{\mu\mathcal{T}\beta_{k}}{18}\|\bar{x}_{k}-x^{\star}_{\mathcal{H}} -\mathcal{T}\beta_{k}\nabla q_{\mathcal{H}}(\bar{x}_{k})\|^{2}\notag\\
    &\quad + \frac{18\beta_{k}}{\mu}\big(\frac{1}{|\mathcal{H}|}\sum_{i \in \mathcal{H}}\sum_{l=0}^{\mathcal{T}-1}\|e^{i}_{k,l}\|^{2}\big)\notag\\
    &\leq \frac{\mu\mathcal{T}\beta_{k}}{18}(\|\bar{x}_{k}-x^{\star}_{\mathcal{H}}\|^{2} + L^{2}\mathcal{T}^{2}\beta^{2}_{k}\|\bar{x}_{k}-x^{\star}_{\mathcal{H}}\|^{2})\notag\\
    &\quad+ \frac{18\beta_{k}}{\mu}\big(\frac{1}{|\mathcal{H}|}\sum_{i \in \mathcal{H}}\sum_{l=0}^{\mathcal{T}-1}\|e^{i}_{k,l}\|^{2}\big)\notag.
\end{align}
Next, taking expectation on both the sides of the above inequality along with the result from Lemma \ref{lem:e_k_t_V12} we have 
\begin{align}\label{eq:P_3b}
    &2\mathbb{E}\Big[\big(\bar{x}_{k}-x^{\star}_{\mathcal{H}} - \mathcal{T}\beta_{k}\nabla q_{\mathcal{H}}(\bar{x}_{k})\big)^{T}\big(\frac{\beta_{k}}{|\mathcal{H}|}\sum_{i \in \mathcal{H}}\sum_{l=0}^{\mathcal{T}-1}e^{i}_{k,l}\big)\Big]\notag\\
    &\leq \big(\frac{\mu\mathcal{T}\beta_{k}}{18} + \frac{\mu L^{2}\mathcal{T}^{3}\beta^{3}_{k}}{18}+\frac{72L^{4}\mathcal{T}^{3}\alpha_{k}\beta^{3}_{k}}{\mu}\big)\mathbb{E}[\|\bar{x}_{k}-x^{\star}_{\mathcal{H}}\|^{2}]\notag\\
    &\quad +\frac{36\beta_{k}\mathcal{T}}{\mu}\mathbb{E}[W_{k}] + \frac{36\mathcal{T}\sigma^{2}\alpha^{2}_{k}\beta_{k}}{\mu}\notag\\
    &\leq \big(\frac{\mu\mathcal{T}\beta_{k}}{18} + \frac{ L^{2}\mathcal{T}^{2}\beta^{2}_{k}}{18}+\frac{72L^{2}\mathcal{T}\alpha_{k}\beta_{k}}{\mu}\big)\mathbb{E}[\|\bar{x}_{k}-x^{\star}_{\mathcal{H}}\|^{2}]\notag\\
    &\quad +\frac{36\beta_{k}\mathcal{T}}{\mu}\mathbb{E}[W_{k}] + \frac{36\mathcal{T}\sigma^{2}\alpha^{2}_{k}\beta_{k}}{\mu},
\end{align}
where the last inequality is obtained using $\mu\leq L$ and $\beta_{k}L\mathcal{T} \leq 1$. Putting back the results from \eqref{eq:P_3a} and \eqref{eq:P_3b} back into \eqref{eq:P3_v0}, we have
\begin{align}\label{eq:P3_final}
     &-2\mathbb{E}\Big[\big(\bar{x}_{k}-x^{\star}_{\mathcal{H}} - \mathcal{T}\beta_{k}\nabla q_{\mathcal{H}}(\bar{x}_{k})\big)^{T}\big(\mathcal{E}_{x} -\frac{\beta_{k}}{|\mathcal{H}|}\sum_{i \in \mathcal{H}}\sum_{l=0}^{\mathcal{T}-1}e^{i}_{k,l}\big)\Big]\notag\\
     &\leq \big(\frac{\mu\mathcal{T}\beta_{k}}{18}+\frac{17L\mathcal{T}\beta_{k}|\mathcal{B}_{k}|}{3|\mathcal{H}|}  \big)\mathbb{E}[\|\bar{x}_{k}-x^{\star}_{\mathcal{H}}\|^{2}]\notag\\
     &\quad + \big(15L^{2}\mathcal{T}^{2}\beta^{2}_{k} + \frac{72L^{2}\mathcal{T}\alpha_{k}\beta_{k}}{\mu}\big)\mathbb{E}[\|\bar{x}_{k}-x^{\star}_{\mathcal{H}}\|^{2}]\notag\\
     &\quad +\frac{42\mathcal{T}\beta_{k}}{\mu}\mathbb{E}[W_{k}] + \frac{42\mathcal{T}\sigma^{2}\alpha^{2}_{k}\beta_{k}}{\mu}.
\end{align}
Finally, by taking the expectation on both sides of \eqref{eq:x_bar_squared_update}, substituting the expressions for \( P_1 \), \( P_2 \), and \( P_3 \) from \eqref{eq:P1}, \eqref{eq:P2}, and \eqref{eq:P3_final}, respectively we obtain

\begin{align*}
    &\mathbb{E}[\|\bar{x}_{k+1}-x^{\star}_{\mathcal{H}}\|^{2}] \notag\\
    & \leq \big(1-\frac{35\mu\mathcal{T}\beta_{k}}{18} + \frac{17L\mathcal{T}\beta_{k}|\mathcal{B}_{k}|}{3|\mathcal{H}|}+ 103L^{2}\mathcal{T}^{2}\beta^{2}_{k}\big)\mathbb{E}[\|\bar{x}_{k}-x^{\star}_{\mathcal{H}}\|^{2}]\notag\\
    &\quad + \frac{72L^{2}\mathcal{T}\alpha_{k}\beta_{k}}{\mu}\mathbb{E}[\|\bar{x}_{k}-x^{\star}_{\mathcal{H}}\|^{2}] +  \frac{78\mathcal{T}\beta_{k}}{\mu}\mathbb{E}[W_{k}]\notag\\
    &\quad +4\mathcal{T}^{2}\sigma^{2}\alpha_{k}\beta^{2}_{k} + \frac{48\mathcal{T}\sigma^{2}\alpha^{2}_{k}\beta_{k}}{\mu} + \frac{32\mathcal{T}^{2}\sigma^{2}f\alpha^{2}_{k}\beta^{2}_{k}}{|\mathcal{H}|}\notag.
\end{align*}
This concludes our proof.

\subsection*{Proof of Lemma \ref{lem:Lemma_V1_PL}}
To prove lemma \ref{lem:Lemma_V1_PL} we require the following lemma.
\begin{lemma}\label{lem:norm_E_x}
    \begin{align}
       \hspace{-0.3cm}\|\mathcal{E}_{k}\|\!\! \leq\!\! \frac{2L\mathcal{T}\beta_{k}|\mathcal{B}_{k}|}{\mu|\mathcal{H}|}\|\nabla q_{\mathcal{H}}(\bar{x}_{k})\| + 2\beta_{k}\big(\frac{1}{|\mathcal{H}|}\!\sum_{i \in \mathcal{H}}\!\!\sum_{t=0}^{\mathcal{T}-1}\|e^{i}_{k,t}\|\big).
    \end{align}
\end{lemma}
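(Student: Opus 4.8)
The plan is to bound $\|\mathcal{E}_k\|$ directly via the triangle inequality, exploiting the comparative-elimination ordering to discard the contribution of the surviving Byzantine agents. Starting from the definition \eqref{eq:V_x}, the triangle inequality gives
\begin{align*}
\|\mathcal{E}_{x}\| \leq \frac{1}{|\mathcal{H}|}\sum_{i\in\mathcal{B}_{k}}\|x^{i}_{k,\mathcal{T}}-\bar{x}_{k}\| + \frac{1}{|\mathcal{H}|}\sum_{i\in\mathcal{H}\backslash\mathcal{H}_{k}}\|x^{i}_{k,\mathcal{T}}-\bar{x}_{k}\|.
\end{align*}
The key observation is that, by the sorting rule \eqref{alg:sort_distances}, every surviving Byzantine client has a smaller deviation than every eliminated honest client, i.e. $\|x^{i}_{k,\mathcal{T}}-\bar{x}_{k}\|\leq\|x^{j}_{k,\mathcal{T}}-\bar{x}_{k}\|$ for all $i\in\mathcal{B}_{k}$ and $j\in\mathcal{H}\backslash\mathcal{H}_{k}$. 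Combined with the cardinality identity $|\mathcal{B}_{k}|=|\mathcal{H}\backslash\mathcal{H}_{k}|$, this yields $\sum_{i\in\mathcal{B}_{k}}\|x^{i}_{k,\mathcal{T}}-\bar{x}_{k}\|\leq\sum_{i\in\mathcal{H}\backslash\mathcal{H}_{k}}\|x^{i}_{k,\mathcal{T}}-\bar{x}_{k}\|$ (bound each element of the smaller sum by its maximum, which is at most the minimum of the larger sum). Hence $\|\mathcal{E}_{x}\|\leq\frac{2}{|\mathcal{H}|}\sum_{i\in\mathcal{H}\backslash\mathcal{H}_{k}}\|x^{i}_{k,\mathcal{T}}-\bar{x}_{k}\|$, reducing everything to deviations of honest agents.

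Next I would expand each honest deviation using \eqref{eq:x_update_T}, namely $x^{i}_{k,\mathcal{T}}-\bar{x}_{k} = -\beta_{k}\sum_{t=0}^{\mathcal{T}-1}e^{i}_{k,t}-\mathcal{T}\beta_{k}\nabla q^{i}(\bar{x}_{k})$, so that
\begin{align*}
\|x^{i}_{k,\mathcal{T}}-\bar{x}_{k}\| \leq \beta_{k}\sum_{t=0}^{\mathcal{T}-1}\|e^{i}_{k,t}\| + \mathcal{T}\beta_{k}\|\nabla q^{i}(\bar{x}_{k})\|.
\end{align*}
The remaining task is to replace the individual gradient $\nabla q^{i}(\bar{x}_{k})$ by the aggregate gradient $\nabla q_{\mathcal{H}}(\bar{x}_{k})$, which is exactly where the factor $1/\mu$ in the target bound originates and is the main obstacle. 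Here I would invoke the $2f$-redundancy relation \eqref{eq:redundancy_condition}, which guarantees $\nabla q^{i}(x^{\star}_{\mathcal{H}})=0$ for every $i\in\mathcal{H}$; Lipschitz continuity (Assumption \ref{eq:Assumption_Lipschitz}) then gives $\|\nabla q^{i}(\bar{x}_{k})\|=\|\nabla q^{i}(\bar{x}_{k})-\nabla q^{i}(x^{\star}_{\mathcal{H}})\|\leq L\|\bar{x}_{k}-x^{\star}_{\mathcal{H}}\|$, and the P\L\ condition (Assumption \ref{eq:Assumption_PL_condition}), which implies $\|\bar{x}_{k}-x^{\star}_{\mathcal{H}}\|\leq\frac{1}{\mu}\|\nabla q_{\mathcal{H}}(\bar{x}_{k})\|$, converts this into $\|\nabla q^{i}(\bar{x}_{k})\|\leq\frac{L}{\mu}\|\nabla q_{\mathcal{H}}(\bar{x}_{k})\|$.

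Finally I would sum over $i\in\mathcal{H}\backslash\mathcal{H}_{k}$, use $|\mathcal{H}\backslash\mathcal{H}_{k}|=|\mathcal{B}_{k}|$ to collect the $\frac{L\mathcal{T}\beta_{k}|\mathcal{B}_{k}|}{\mu}\|\nabla q_{\mathcal{H}}(\bar{x}_{k})\|$ term, and enlarge the double sum of $\|e^{i}_{k,t}\|$ from the eliminated honest set to all of $\mathcal{H}$ (legitimate since every summand is nonnegative). Multiplying by the prefactor $2/|\mathcal{H}|$ then yields precisely the claimed inequality. The only genuinely delicate point is the gradient-transfer step of the previous paragraph, where the redundancy structure and the P\L\ inequality together trade the local gradients for the global one at the price of $L/\mu$; the rest is bookkeeping with the triangle inequality and the filter's cardinality identity.
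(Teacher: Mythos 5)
Your proposal is correct and follows essentially the same route as the paper's own proof: triangle inequality on the definition of $\mathcal{E}_{x}$, the CE-filter ordering plus the cardinality identity $|\mathcal{B}_{k}|=|\mathcal{H}\backslash\mathcal{H}_{k}|$ to reduce everything to the eliminated honest agents, expansion via \eqref{eq:x_update_T} with $\nabla q^{i}(x^{\star}_{\mathcal{H}})=0$ from \eqref{eq:redundancy_condition}, and then Lipschitz continuity combined with the P\L\ inequality to trade the local gradients for $\frac{L}{\mu}\|\nabla q_{\mathcal{H}}(\bar{x}_{k})\|$. If anything, your treatment of the sorting step (each kept-Byzantine deviation is at most the minimum eliminated-honest deviation, hence the sums compare term by term) is cleaner than the paper's write-up, which selects a \emph{maximizing} agent $j\in\mathcal{H}\backslash\mathcal{H}_{k}$ and then asserts a max-below-average bound that only holds if $j$ is instead taken to be the minimizer.
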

\begin{proof}
    Using the definition of $\mathcal{E}_{k}$ from \eqref{eq:V_x} we have
    \begin{align*}
        \|\mathcal{E}_{x}\| \leq \frac{1}{|\mathcal{H}|}\sum_{i \in \mathcal{B}_{k}}\|x^{i}_{k,\mathcal{T}}-\bar{x}_{k}\| + \frac{1}{|\mathcal{H}|}\sum_{i \in \mathcal{H}\backslash\mathcal{H}_{k}}\|x^{i}_{k,\mathcal{T}}-\bar{x}_{k}\|.
    \end{align*}
    By \eqref{alg:sort_distances}, there exists $j \in \mathcal{H}\backslash \mathcal{H}_{k}$ such that $\|x^{i}_{k,t}-\Bar{x}_{k}\| \leq \|x^{j}_{k,t}-\Bar{x}_{k}\|$ for all agent $i \in \mathcal{B}_{k}$ using which the above inequality becomes
    \begin{align}\label{eq:norm_E_x_analysis_0}
        \|\mathcal{E}_{x}\| \leq \frac{|\mathcal{B}_{k}|}{|\mathcal{H}|}\|x^{j}_{k,\mathcal{T}}-\bar{x}_{k}\| +\frac{1}{|\mathcal{H}|}\sum_{i \in \mathcal{H}\backslash\mathcal{H}_{k}}\|x^{j}_{k,\mathcal{T}}-\bar{x}_{k}\|.
    \end{align}
Here, we consider that there exists an agent $j \in \mathcal{H}\backslash\mathcal{H}_{k}$ such that the quantity $ \|x^{j}_{k,t}-\Bar{x}_{k}\|$ is maximum over all the agents in the set $\mathcal{H}\backslash\mathcal{H}_{k}$. Further since $|\mathcal{H}\backslash\mathcal{H}_{k}| = |\mathcal{B}_{k}|$, we have 
\begin{align*}
     \|x^{j}_{k,t}-\Bar{x}_{k}\|^{2} &\leq \frac{1}{|\mathcal{H}\backslash\mathcal{H}_{k}|}\sum_{i \in \mathcal{H}\backslash\mathcal{H}_{k}}\|x^{i}_{k,t}-\Bar{x}_{k}\|^{2}\notag\\
     &=\frac{1}{|\mathcal{B}_{k}|}\sum_{i \in \mathcal{H}\backslash\mathcal{H}_{k}}\|x^{i}_{k,t}-\Bar{x}_{k}\|^{2}.
\end{align*} 
Next using \eqref{eq:redundancy_condition} which implies $\nabla q^{i}(x^{*}_{\mathcal{H}}) =0$ which  with the result from \eqref{eq:x_update_T}, we express \eqref{eq:norm_E_x_analysis_0} as
\begin{align}
    \|\mathcal{E}_{k}\| &\leq \frac{2}{|\mathcal{H}|}\sum_{\mathcal{H}\backslash\mathcal{H}_{k}}\|x^{i}_{k,\mathcal{T}}-\bar{x}_{k}\|\notag\\
    &\leq \frac{2}{|\mathcal{H}|}\sum_{\mathcal{H}\backslash\mathcal{H}_{k}}\big(\Big\|-\mathcal{T}\beta_{k}\nabla q^{i}(\bar{x}_{k})-\beta_{k}\sum_{t=0}^{\mathcal{T}-1}e^{i}_{k,t}\Big\|\big)\notag\\
    &\leq \frac{2\mathcal{T}\beta_{k}}{|\mathcal{H}|}\sum_{i \in \mathcal{H}\backslash\mathcal{H}_{k}}\|\nabla q^{i}(\bar{x}_{k})-\nabla q^{i}(x^{\star}_{\mathcal{H}})\|\notag\\
    &\quad +\frac{2\beta_{k}}{|\mathcal{H}|}\sum_{i \in \mathcal{H}\backslash\mathcal{H}_{k}}\sum_{t=0}^{\mathcal{T}-1}\|e^{i}_{k,t}\|\notag\\
    & \leq \frac{2L\mathcal{T}\beta_{k}|\mathcal{B}_{k}|}{|\mathcal{H}|}\|\bar{x}_{k}-x^{\star}_{\mathcal{H}}\| + 2\beta_{k}\big(\frac{1}{|\mathcal{H}|}\sum_{i \in \mathcal{H}\backslash\mathcal{H}_{k}}\sum_{t=0}^{\mathcal{T}-1}\|e^{i}_{k,t}\|\big)\notag\\
    & \leq \frac{2L\mathcal{T}\beta_{k}|\mathcal{B}_{k}|}{\mu|\mathcal{H}|}\|\nabla q_{\mathcal{H}}(\bar{x}_{k})\|+ 2\beta_{k}\big(\frac{1}{|\mathcal{H}|}\sum_{i \in \mathcal{H}}\sum_{t=0}^{\mathcal{T}-1}\|e^{i}_{k,t}\|\big),\notag
\end{align}
where the second last inequality is due to Assumptiom \ref{eq:Assumption_Lipschitz} and last inequality is obtained using Assumption \ref{eq:Assumption_PL_condition}.
\end{proof}

\subsection*{Proof of Lemma \ref{lem:Lemma_V1_PL}}

Assumption \ref{eq:Assumption_Lipschitz} implies $q_{\mathcal{H}}$ has Lipschitz continuous gradient using which we have
\begin{align}\label{eq:diff_q_h}
    & q_{\mathcal{H}}(\Bar{x}_{k+1})-  q_{\mathcal{H}}(\Bar{x}_{k})\notag\\
    &\leq \nabla q_{\mathcal{H}}(\Bar{x}_{k})^{T}(\Bar{x}_{k+1}-\Bar{x}_{k}) + \frac{L}{2}\|\Bar{x}_{k+1}-\Bar{x}_{k}\|^{2}.
\end{align}
Using \eqref{eq:x_bar_update}, we analyze the first term in the right hand side of the above equation as the following
\begin{align}\label{eq:diff_q_h_term_one}
    &\nabla q_{\mathcal{H}}(\Bar{x}_{k})^{T}(\Bar{x}_{k+1}-\Bar{x}_{k})\notag\\
    &= -\mathcal{T}\beta_{k}\|\nabla q_{\mathcal{H}}(\Bar{x}_{k})\|^{2} -\beta_{k}\nabla q_{\mathcal{H}}(\Bar{x}_{k})^{T}\big(\frac{1}{|\mathcal{H}|}\sum_{i \in \mathcal{H}}\sum_{t=0}^{\mathcal{T}-1}e^{i}_{k,t}\big) \notag\\
    &\quad+ \mathcal{E}_{x}^{T}\nabla q_{\mathcal{H}}(\Bar{x}_{k})\notag\\
    & = A_1 + A_2 +A_3,
\end{align}
where $A_{i}$ with $i=1,2,3$ are defined in that order. To analyze \(A_2\), we apply Assumption \ref{eq:Assumption_PL_condition} and utilize the Cauchy-Schwarz inequality. Thus, we have
% To analyze each term on the right-hand side of the above equation, we use Assumption \ref{eq:Assumption_PL_condition} from Lemma \ref{lem:e_k_t_V12}, which gives
% \begin{align}\label{eq:e_k_l_PL}
%     &\frac{1}{|\mathcal{H}|}\sum_{i \in \mathcal{H}}\sum_{l=0}^{\mathcal{T}-1}\mathbb{E}[\|e^{i}_{k,l}\|^{2}] \notag\\
%     &\leq 2\mathcal{T}\mathbb{E}[V^{k}_{2}] + 4\alpha_{k}\beta^{2}_{k}L^{4}\mathcal{T}^{3}\mathbb{E}[\|\bar{x}_{k}-x^{*}_{\mathcal{H}}\|^{2}] + 2\alpha^{2}_{k}\sigma^{2}\mathcal{T}\notag\\
%     &\leq 2\mathcal{T}\mathbb{E}[V^{k}_{2}] + \frac{4\alpha_{k}\beta^{2}_{k}L^{4}\mathcal{T}^{3}}{\mu^{2}}\mathbb{E}[\|\nabla q^{\mathcal{H}}(\bar{x}_{k})\|^{2}] + 2\alpha^{2}_{k}\sigma^{2}\mathcal{T}.
% \end{align}
\begin{align}
    &-\beta_{k}\nabla q_{\mathcal{H}}(\Bar{x}_{k})^{T}\big(\frac{1}{|\mathcal{H}|}\sum_{i \in \mathcal{H}}\sum_{t=0}^{\mathcal{T}-1}e^{i}_{k,t}\big) \notag\\
    & \leq \frac{\beta_{k}\mathcal{T}}{12}\|\nabla q_{\mathcal{H}}(\bar{x}_{k})\|^{2} + \frac{3\beta_{k}}{\mathcal{T}}\Big\|\frac{1}{|\mathcal{H}|}\sum_{i \in \mathcal{H}}\sum_{l=0}^{\mathcal{T}-1}e^{i}_{k,l}\Big\|^{2}\notag\\
    &\leq \frac{\beta_{k}\mathcal{T}}{12}\|\nabla q_{\mathcal{H}}(\bar{x}_{k})\|^{2} + 3\beta_{k}\big(\frac{1}{|\mathcal{H}|}\sum_{i \in \mathcal{H}}\sum_{l=0}^{\mathcal{T}-1}\|e^{i}_{k,l}\|^{2}\big).\notag
\end{align}
Taking expectation on both the sides of the above inequality and using Lemma \ref{lem:e_k_t_V12} along with Assumption \ref{eq:Assumption_PL_condition} we have
\begin{align}\label{eq:TermA_2}
    &-\mathbb{E}\Big[\beta_{k}\nabla q_{\mathcal{H}}(\Bar{x}_{k})^{T}\big(\frac{1}{|\mathcal{H}|}\sum_{i \in \mathcal{H}}\sum_{t=0}^{\mathcal{T}-1}e^{i}_{k,t}\big)\Big] \notag\\
    & \leq  \big(\frac{\beta_{k}\mathcal{T}}{12} + \frac{12L^{4}\mathcal{T}^{3}\beta^{3}_{k}}{\mu^{2}}\big)\mathbb{E}[\|\nabla q_{\mathcal{H}}(\bar{x}_{k})\|^{2}] + 6\beta_{k}\mathcal{T}\mathbb{E}[W_{k}] \notag\\
    &\quad+ 6\sigma^{2}\mathcal{T}\alpha_{k}\beta_{k}.
\end{align}
Next, to analyze term $A_3$ from \eqref{eq:diff_q_h_term_one} we use Lemma \ref{lem:norm_E_x} and Cauchy-Schwarz ineuqlity to obtain
\begin{align}
    \mathcal{E}_{x}^{T}\nabla q_{\mathcal{H}}(\Bar{x}_{k})& \leq \|\mathcal{E}_{x}\|\|\nabla q_{\mathcal{H}}(\Bar{x}_{k})\|\notag\\
    & \leq \frac{2L\mathcal{T}\beta_{k}|\mathcal{B}_{k}|}{\mu|\mathcal{H}|}\|\nabla q_{\mathcal{H}}(\Bar{x}_{k})\|^{2} \notag\\
    &\quad+ 2\beta_{k}\|\nabla q^{\mathcal{H}}(\bar{x}_{k})\|\big(\frac{1}{|\mathcal{H}|}\sum_{i \in \mathcal{H}}\sum_{l=0}^{\mathcal{T}-1}\|e^{i}_{k,l}\|\big)\notag\\
    &\leq \big(\frac{2L\mathcal{T}\beta_{k}|\mathcal{B}_{k}|}{\mu|\mathcal{H}|} + \frac{\mathcal{T}\beta_{k}}{12}\big)\|\nabla q_{\mathcal{H}}(\bar{x}_{k})\|^{2}\notag\\
    &\quad + 12\beta_{k}\big(\frac{1}{|\mathcal{H}|}\sum_{i \in \mathcal{H}}\sum_{t=0}^{\mathcal{T}-1}\|e^{i}_{k,t}\|^{2}\big).\notag
\end{align}
Taking expectation on both sides of the above inequality and using Lemma \ref{lem:e_k_t_V12} we have
\begin{align}\label{eq:TermA_3}
    &\mathbb{E}[\mathcal{E}_{x}^{T}\nabla q_{\mathcal{H}}(\bar{x}_{k})]\notag\\
    &\leq \Bigg(\frac{2L\mathcal{T}\beta_{k}|\mathcal{B}_{k}|}{\mu|\mathcal{H}|} + \frac{\mathcal{T}\beta_{k}}{12} + \frac{48L^{4}\mathcal{T}^{3}\beta^{3}_{k}}{\mu^{2}}\Bigg)\mathbb{E}[\|\nabla q_{\mathcal{H}}(\bar{x}_{k})\|^{2}] \notag\\
    &\quad + 24\mathcal{T}\beta_{k}\mathbb{E}[W_{k}] + 24\mathcal{T}\sigma^{2}\alpha_{k}\beta_{k}.
\end{align}
Finally putting back the relations from \eqref{eq:TermA_2} and \eqref{eq:TermA_3} back into \eqref{eq:diff_q_h_term_one} and using $L\mathcal{T}\beta_{k} \leq 1$ we get
\begin{align}\label{eq:diff_q_h_term_one_final}
    &\mathbb{E}[\nabla q_{\mathcal{H}}(\bar{x}_{k})^{T}(\bar{x}_{k+1}-\bar{x}_{k})]\notag\\
    &\leq\big(-\frac{5\beta_{k}\mathcal{T}}{6} +  \frac{2L\beta_{k}\mathcal{T}|\mathcal{B}_{k}|}{\mu|\mathcal{H}|} + \frac{60L^{3}\mathcal{T}^{2}\beta^{2}_{k}}{\mu^{2}}\big)\mathbb{E}[\|\nabla q_{\mathcal{H}}(\bar{x}_{k})\|^{2}] \notag\\
    &\quad + 30\beta_{k}\mathcal{T}\mathbb{E}[V^{k}_{2}]+30\sigma^{2}\mathcal{T}\alpha^{2}_{k}.
\end{align}
Next, using Lemma \ref{lem:x_k_t_minus_x_bar_squared}, we analyze the second term in the right hand side of \eqref{eq:diff_q_h}. Thus we have
\begin{align}\label{eq:diff_q_h_Term_two}
    &\frac{L}{2}\mathbb{E}[\|\bar{x}_{k+1}-\bar{x}_{k}\|^{2}]\notag\\
    &\leq 50L^{3}\mathcal{T}^{2}\beta^{2}_{k}\mathbb{E}[\|\bar{x}_{k}-x^{*}_{\mathcal{H}}\|^{2}]  + 20L\mathcal{T}^{2}\beta^{2}_{k}\mathbb{E}[W_{k}] \notag\\
    &\quad+ 16L\mathcal{T}^{2}\sigma^{2}\alpha_{k}\beta^{2}_{k} + \frac{16L\mathcal{T}^{2}\sigma^{2}f\alpha^{2}_{k}\beta^{2}_{k}}{|\mathcal{H}|}\notag\\
    &\leq \frac{50L^{3}\mathcal{T}^{2}\beta^{2}_{k}}{\mu^{2}}\mathbb{E}[\|\nabla q_{\mathcal{H}}(\bar{x}_{k})\|^{2}]  + 20\mathcal{T}\beta_{k}\mathbb{E}[W_{k}] \notag\\
    &\quad+ 16L\mathcal{T}^{2}\sigma^{2}\alpha_{k}\beta^{2}_{k} + \frac{16L\mathcal{T}^{2}\sigma^{2}f\alpha^{2}_{k}\beta^{2}_{k}}{|\mathcal{H}|},
\end{align}
where the last inequality is obtained using Assumption \ref{eq:Assumption_PL_condition} and the condition $L\mathcal{T}\beta_{k} \leq 1$. 

Putting back the results from \eqref{eq:diff_q_h_term_one_final} and \eqref{eq:diff_q_h_Term_two} back into \eqref{eq:diff_q_h} we immediately obtain \eqref{lem:Lemma_V1_PL:ineq}.
\end{document}